\newcommand{\trip}{(\vec{b},\vec{e},\vec{a})}
\newcommand{\bal}{\operatorname{bal}}
\newcommand{\QuotSES}{\Quot^{n,d}_{\bP^1}(\cO(\vec{e}))}
\newcommand{\QuotLF}{\Quot^{n,d}_{\bP^1}(\cO(\vec{e}))^{\circ}}
\newcommand{\last}{\operatorname{last}}
\newcommand{\END}{\operatorname{end}}
\newcommand{\Pair}{\operatorname{Pair}}
\newcommand{\IB}{\operatorname{IB}}
\title{The locally free locus of Quot schemes on $\bP^1$}
\author{Feiyang Lin, Theodore Lysek}
\begin{document}

\begin{abstract}
    We characterize components of the locally free locus $\QuotLF$ of the Quot scheme associated to any vector bundle on $\bP^1$. Specifically, we show that the components are in bijection with certain combinatorial objects which we call strongly stable pairs. Using our explicit understanding of the components, we prove that $\QuotLF$ is connected, and we give an explicit bound for when $\QuotLF$ is irreducible. The key ingredient is a combinatorial criterion for when a triple of vector bundles on $\bP^1$ arises in a short exact sequence. As a consequence, we prove that in codimension $2$, all integral lattice points in the Boij-S\"oderberg cone are Betti diagrams of actual modules.
\end{abstract}

\maketitle

\section{Introduction}
Let $\vec{e} = (e_1, \dots, e_{m+n})$ where $m, n \geq 1$, $e_1 \leq e_2 \leq \cdots \leq e_{m+n}$, and define $\cO_{\bP^1}(\vec{e}) = \cO_{\bP^1}(e_1) \oplus \cdots \oplus \cO_{\bP^1}(e_{m+n})$. We often omit the subscript and write $\cO(\vec{e})$. Let $\QuotSES$ be the Quot scheme parametrizing flat families of quotients of $\cO(\vec{e})$ of rank $n$ and degree $d$. Let $\QuotLF \subseteq \QuotSES$ be the open locus where the universal quotient is locally free. 

When $\vec{e}$ is balanced (i.e. $e_{m+n}-e_1 \leq 1$), the Quot scheme $\Quot^{n,d}_{\bP^1}(\cO(\vec{e}))$ is a smooth, rational, projective variety \cite{Stromme}, but when $\cO(\vec{e})$ is not balanced, $\QuotSES$ can have many components. In \cite{Popa-Roth}, Popa and Roth showed that for any curve $C$ and any vector bundle $E$ on $C$, the Quot scheme $\Quot^{n,d}_{C}(E)$ is irreducible and generically smooth for $d$ sufficiently large. However, little is known about the geometry of the Quot scheme in general. In this article, we elucidate the behavior of $\QuotSES$ when $d$ is not sufficiently large by giving explicit characterizations of the components of $\QuotLF$, and showing that $\QuotLF$ is connected.

The key ingredient of our geometric results is an explicit understanding of which triples of vector bundles on $\bP^1$ can appear in a short exact sequence. Throughout, we work over a field $k$, with no restrictions on the characteristic.

\subsection{Characterization of short exact sequences} A tuple $\vec{f} = (f_1, \dots, f_m)$ of integers where $f_1 \leq f_2 \leq \cdots \leq f_m$ is called a \emph{splitting type}, corresponding to a rank $m$ vector bundle on $\bP^1$. We write $\rk \vec{f} = m$ and $\deg \vec{f} = \sum_{i = 1}^m f_i$. There is a partial order on splitting types which corresponds to the closure order in the universal setting of the stack of vector bundles on $\bP^1$. For splitting types $\vec{f}, \pvec{f}'$ of rank $m$ and the same degree, we write $\vec{f} \geq \pvec{f}'$, and say that $\vec{f}$ is more balanced than $\pvec{f}'$, if for all $1 \leq k \leq m$, $f_1 + \cdots f_k \geq f_1' + \cdots + f_k'$. 

A $k$-point of $\QuotLF$ corresponds to a locally free quotient [$\cO(\vec{e}) \twoheadrightarrow \cO(\vec{a})$], well-defined up to automorphisms of $\cO(\vec{a})$. Let the kernel be $\cO(\vec{b})$. Then the splitting types $(\vec{b}, \vec{a})$ of the kernel bundle and quotient bundle induce a stratification of $\QuotLF$.  Our strategy begins by studying which triples of splitting types $(\vec{b}, \vec{e}, \vec{a})$ are realizable in a short exact sequence
\[
0 \to \cO(\vec{b}) \to \cO(\vec{e}) \to \cO(\vec{a}) \to 0.
\]

\begin{definition}
    Let $\vec{a} = (a_1, \dots, a_n), \vec{b} = (b_1, \dots, b_m), \vec{e} = (e_1, \dots, e_{m+n})$ be splitting types of rank $m, n, $ and $m+n$ respectively. We say that the triple of splitting types $\trip$ is \emph{eligible} if 
    \begin{enumerate}
    \item $\rk \vec{a} + \rk \vec{b} = \rk \vec{e}$;
    \item $\deg \vec{a} + \deg \vec{b} = \deg \vec{e}$.
    \item there exists an injective map $\cO(\vec{b}) \to \cO(\vec{e})$ with locally free cokernel;
    \item there exists a surjective map $\cO(\vec{e}) \to \cO(\vec{a})$.
\end{enumerate}
\end{definition}
We recall the combinatorial versions of conditions (3) and (4) in Proposition \ref{prop:onlyif}. Clearly if $\trip$ is realizable in a short exact sequence, it must be eligible. To state the additional requirements for realizability, we first make some definitions. 
\begin{definition}
    Given $(\vec{b}, \vec{e}, \vec{a})$, for $1 \leq \mu \leq m, 1 \leq \nu \leq n$, let
\begin{align*}
    A(\mu, \nu) &=a_\nu - e_{\mu+\nu},\\
    B(\mu, \nu) &=e_{\mu+\nu-1} - b_\mu.
\end{align*}
For $1 \leq \mu \leq m+1, 1 \leq \nu \leq n+1$, let
\[
    S(\mu, \nu) = \sum_{i=\nu}^n a_i + \sum_{i=\mu}^m b_i - \sum_{i=\mu+\nu-1}^{m+n}e_i.
\]
\end{definition}

\begin{theorem} \label{thm:realizability_intro}
    An eligible triple $\trip$ is realizable in a short exact sequence if and only if the following conditions are satisfied:
    \begin{enumerate}
     \item Given $1 \leq \mu \leq m$, if there exists $1 \leq \nu' \leq n$ such that $A(\mu, \nu') < 0$, let \[\nu = \max \{1 \leq \nu' \leq n: A(\mu, \nu') < 0\}.\] Then $S(\mu, \nu+1) \geq 0$.
    \item Given $1 \leq \nu \leq n$, if there exists $1 \leq \mu' \leq m$ such that $B(\mu', \nu) < 0$, let \[\mu = \min \{1 \leq \mu' \leq m: B(\mu', \nu) < 0\}.\] Then $S(\mu, \nu+1) \geq 0$.
    \end{enumerate}
\end{theorem}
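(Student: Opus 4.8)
The plan is to prove the two implications separately. Necessity is a sequence of degree estimates; sufficiency is an induction in the dominance order on splitting types, anchored at the split extension.

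\emph{Necessity.} Suppose $0\to\cO(\vec b)\xrightarrow{\iota}\cO(\vec e)\xrightarrow{\pi}\cO(\vec a)\to0$ is exact; I prove (1), the argument for (2) being the same after dualizing the sequence. Fix $\mu$ and let $\nu$ be as in (1), so $a_{\nu'}\ge e_{\mu+\nu'}$ for all $\nu'>\nu$. Since $e_i\ge e_{\mu+\nu}>a_\nu\ge a_j$ whenever $i\ge\mu+\nu$ and $j\le\nu$, every component $\cO(e_i)\to\cO(a_j)$ of $\pi$ with $i\ge\mu+\nu$ and $j\le\nu$ vanishes, so the sub-bundle $W=\bigoplus_{i\ge\mu+\nu}\cO(e_i)$, of rank $r=m+n-\mu-\nu+1$, is carried by $\pi$ into $A_{>\nu}:=\bigoplus_{j>\nu}\cO(a_j)$. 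Set $K=W\cap\iota\cO(\vec b)$, a rank-$k$ subsheaf of $\cO(\vec b)$ with $k\ge r-(n-\nu)=m-\mu+1$, and put $t=k-(m-\mu+1)\ge0$. From $0\to K\to W\to\pi(W)\to0$, bounding $\deg K$ by the top $k$-sum of $\vec b$ and $\deg\pi(W)=\deg(W/K)$ by the top $(r-k)$-sum of $A_{>\nu}$, one obtains
\[
e_{\mu+\nu}+\dots+e_{m+n}\ \le\ (b_{\mu-t}+\dots+b_m)+(a_{\nu+t+1}+\dots+a_n).
\]
It then remains to absorb the correction $b_{\mu-t}+\dots+b_{\mu-1}\le a_{\nu+1}+\dots+a_{\nu+t}$; I would deduce this from the defining inequalities $a_{\nu+j}\ge e_{\mu+\nu+j}$ together with the combinatorial reformulations of (3) and (4) recalled in Proposition \ref{prop:onlyif}. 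Rearranging gives $S(\mu,\nu+1)\ge0$.

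\emph{Sufficiency.} Here I would induct on the distance from $\vec e$ to the sorted concatenation $\vec e_0$ of $\vec a$ and $\vec b$ in the dominance order. Note $\vec e\ge\vec e_0$ automatically: scaling the extension class gives a family over $\mathbb A^1$ with special fibre $\cO(\vec a)\oplus\cO(\vec b)=\cO(\vec e_0)$ and general fibre $\cO(\vec e)$, so $\vec e_0\le\vec e$ by semicontinuity of splitting type. If $\vec e=\vec e_0$, the split extension realizes $\trip$. If $\vec e>\vec e_0$, the crux is a combinatorial lemma: there exists $\pvec e'$ with $\vec e_0\le\pvec e'\lessdot\vec e$ such that $(\vec b,\pvec e',\vec a)$ is still eligible and still satisfies (1), (2) --- equivalently, every eligible triple satisfying (1), (2) with $\vec e\ne\vec e_0$ covers another such triple. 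Granting this, $(\vec b,\pvec e',\vec a)$ is realizable by induction, and one completes the step by an elementary modification of the corresponding extension, deforming the middle term $\cO(\pvec e')$ one covering relation upward to $\cO(\vec e)$ within the extensions of $\cO(\vec a)$ by $\cO(\vec b)$.

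\emph{The main obstacle.} The hard part is the last step: showing that the middle term of an extension can be rebalanced upward by a single covering relation exactly when (1), (2) permit it. After the elementary transformation of $\cO(\pvec e')$ one must check that the new extension class can be chosen so that no induced map into, or out of, a relevant sub- or quotient-line-bundle of $\cO(\vec e)$ degenerates past splitting type $\vec e$; the thresholds $S(\mu,\nu+1)\ge0$ are precisely what guarantee the necessary room in the relevant $\operatorname{Hom}$ and $\operatorname{Ext}^1$ groups. I also expect the combinatorial lemma in the inductive step to require care, since it must dovetail exactly with the necessity argument so that the two together cut out the realizable triples on the nose.
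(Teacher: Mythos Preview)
Your necessity argument is close to the paper's, but the ``correction'' step is a genuine gap. You claim that $b_{\mu-t}+\cdots+b_{\mu-1}\le a_{\nu+1}+\cdots+a_{\nu+t}$ would follow from $a_{\nu+j}\ge e_{\mu+\nu+j}$ together with Proposition~\ref{prop:onlyif}; but these only give $\sum_j a_{\nu+j}\ge\sum_j e_{\mu+\nu+j}$ and $\sum_j b_{\mu-j}\le\sum_j e_{n+\mu-j}$, and for general $t$ the index ranges $[\mu+\nu+1,\mu+\nu+t]$ and $[n+\mu-t,n+\mu-1]$ need not be comparable. The paper sidesteps this entirely: since $S(\mu,\nu+1)\ge 0$ is a closed condition on the irreducible space of surjections $\cO(\vec e)\twoheadrightarrow\cO(\vec a)$, it suffices to check it for a \emph{general} surjection, and there the restricted map $W\to A_{>\nu}$ is itself surjective (because $a_{\nu'}\ge e_{\mu+\nu'}$ for all $\nu'>\nu$), forcing your $t$ to be zero and eliminating the correction.

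Your sufficiency strategy is entirely different from the paper's and, as written, does not close. First, your argument that $\vec e\ge\vec e_0$ (``scaling the extension class gives a family\ldots'') presupposes that the triple is already realized, which is exactly what you are trying to prove; you would need a direct combinatorial argument from eligibility and (1),(2) instead. More seriously, the step you flag as the main obstacle really is one: having realized $(\vec b,\vec e',\vec a)$ with $\vec e'\lessdot\vec e$, there is no general mechanism that lets you ``deform the middle term upward'' inside $\operatorname{Ext}^1(\cO(\vec a),\cO(\vec b))$ to hit $\cO(\vec e)$. Knowing that the $\vec e'$-stratum is nonempty does not imply the $\vec e$-stratum is nonempty --- that is precisely the content of the theorem for $\vec e$. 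Your heuristic that the inequalities $S(\mu,\nu+1)\ge 0$ ``guarantee room'' in the relevant $\operatorname{Hom}$ and $\operatorname{Ext}^1$ groups is not a proof, and making it one would amount to re-proving realizability for $(\vec b,\vec e,\vec a)$ directly.

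The paper avoids all of this by giving, under the equivalent reformulation in terms of the indices $h_\mu$, an explicit pair of matrices $(C,G)$ built from monomials in $x,y$: a block-bidiagonal quotient matrix $G$ whose maximal minors visibly contain a pure $x$-power and a pure $y$-power (hence surjective), and a compatible column vector $C$ with $GC=0$ whose maximal minors likewise contain pure powers (hence an injective bundle map). This is constructive and uses the numerical conditions only to certify that every required entry has nonnegative degree.
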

In the body of the paper, we also provide several other equivalent characterizations of realizability (Theorem \ref{thm:main_realizability}, Theorem \ref{thm:realzable_iff_balancing}). Our results recover that of Mandal \cite{Mandal2021}, who studied the case where $\vec{e}$ is perfectly balanced. In that case, a closed point of $\Quot^{n,d}_{\bP^1}(\cO^{\oplus (m+n)})^{\circ}$ corresponds to a map from $\bP^1$ to $\Gr(m,m+n)$ of degree $d$. In general, a closed point of $\QuotLF$ corresponds to a section of the Grassmann bundle $\Gr(m, \cO(\vec{e})) \to \bP^1$ of degree $d$.

\subsection{Connections to Boij-S\"oderberg theory} In Boij-S\"oderberg theory, one is interested in the Betti diagrams of free resolutions of graded Cohen-Macaulay modules of a fixed codimension $c$ over a polynomial ring. The perspective there says that if we only care about Betti diagrams up to scaling, then we should really consider the cone of rational multiples of Betti diagrams, viewed as vectors in an appropriate vector space over $\Q$. Boij and S\"oderberg conjectured explicit descriptions of this cone in terms of the extremal rays \cite{Boij-Soderberg2008}, which was proven by Eisenbud and Schreyer \cite{eisenbud2009betti}. 

While all Betti diagrams of free resolutions of graded modules correspond to an integral lattice point in the Boij-S\"oderberg cone, the converse is not necessarily true. The smallest example known to us is in codimension $3$, where the betti diagram 
\[
D = \begin{pmatrix}
    1 & 2 & - & - \\ 
    - & - & 2 & 1 
\end{pmatrix}
\]
lies in the Boij-S\"oderberg cone, but is not the Betti table of an actual module because the two first syzygies would satisfy a linear Koszul relation. However,
\[
2D = \begin{pmatrix}
    2 & 4 & - & - \\ 
    - & - & 4 & 2 
\end{pmatrix}
\]
is the Betti table of an actual graded module. One can generate such a module and check its resolution has the correct betti table using the following code in Macaulay2:
\begin{verbatim}
    loadPackage "BoijSoederberg"
    L = {0,1,3,4}
    M = randomModule(L, 2)
    betti res M
\end{verbatim}

It is thus interesting to characterize those integral lattice points in the Boij-S\"oderberg cone which are Betti diagrams of genuine graded modules. Our characterization of short exact sequences on $\bP^1$ can equivalently be viewed as characterizing Betti diagrams of free resolutions of Cohen-Macaulay graded modules over $k[x,y]$ of codimension $2$. This allow us to prove a conjecture of Daniel Erman, which says that examples like the above do not occur in codimension $2$.
\begin{corollary}[\cite{Erman2010}, Conjecture 2.5.4]
In projective dimension 2, every integral lattice point in the cone of Betti diagrams is the Betti diagram of a genuine module.
\end{corollary}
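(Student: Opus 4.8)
The plan is to read the corollary off Theorem~\ref{thm:realizability_intro} through the dictionary, announced in the abstract, between short exact sequences of bundles on $\bP^1$ and free resolutions of codimension~$2$ Cohen--Macaulay graded modules over $S=k[x,y]$. Such a module $M$ has finite length and a length two minimal free resolution $0\to F_2\to F_1\to F_0\to M\to 0$; since $\widetilde M=0$ on $\bP^1=\operatorname{Proj}S$, sheafifying gives a short exact sequence $0\to\cO(\vec b)\to\cO(\vec e)\to\cO(\vec a)\to 0$, where $\vec a$ records $-j$ with multiplicity $\beta_{0,j}$ and $\vec e$, $\vec b$ are defined likewise from the first and second Betti numbers. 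Conversely, applying the twisted global sections functor $\Gamma_*=\bigoplus_{t}H^0(\bP^1,-\otimes\cO(t))$ and taking the cokernel of $\Gamma_*\cO(\vec e)\to\Gamma_*\cO(\vec a)$ recovers a finite length module with a length two free resolution of the prescribed numerical type. To land on $D$ exactly one must make this resolution minimal, i.e. choose the realizing maps so that the entries between equal summands vanish; I would check that this choice is compatible with realizability, since killing those entries obstructs neither surjectivity of $\cO(\vec e)\to\cO(\vec a)$ nor the subbundle property of $\cO(\vec b)\to\cO(\vec e)$ once a realizing short exact sequence exists. Granting the dictionary, an integral Betti diagram $D$ in projective dimension~$2$ is the Betti diagram of a genuine module if and only if the triple $\trip$ it determines is realizable, which by Theorem~\ref{thm:realizability_intro} happens exactly when $\trip$ is eligible and satisfies conditions~(1)--(2) there.

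The task thus becomes combinatorial: show that the triple attached to an integral lattice point $D$ of the Boij--S\"oderberg cone is eligible and meets conditions~(1)--(2) of Theorem~\ref{thm:realizability_intro}. The Herzog--K\"uhl equations $\sum_i(-1)^i\sum_j\beta_{i,j}=0$ and $\sum_i(-1)^i\sum_j j\,\beta_{i,j}=0$ hold on every pure diagram, hence on $D$, and they translate into eligibility conditions~(1)--(2), namely $\rk\vec a+\rk\vec b=\rk\vec e$ and $\deg\vec a+\deg\vec b=\deg\vec e$. For eligibility conditions~(3)--(4) I would use the combinatorial reformulation of Proposition~\ref{prop:onlyif}: in the Boij--S\"oderberg decomposition $D=\sum_\tau q_\tau\pi_\tau$ along a chain of degree sequences $\tau=(d_0<d_1<d_2)$, the degrees recorded by $\vec a$, $\vec e$, $\vec b$ interlace in precisely the way required for a general map $\cO(\vec b)\to\cO(\vec e)$ to have locally free cokernel and a general map $\cO(\vec e)\to\cO(\vec a)$ to be surjective.

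The crux is conditions~(1)--(2) of Theorem~\ref{thm:realizability_intro}, the inequalities $S(\mu,\nu+1)\ge 0$ at the distinguished indices. I would rewrite $S(\mu,\nu)=\sum_{i=\nu}^n a_i+\sum_{i=\mu}^m b_i-\sum_{i=\mu+\nu-1}^{m+n}e_i$ as a signed sum of partial column sums of $D$ and expect it to coincide, up to sign, with the pairing of $D$ against one of the cohomology tables that Eisenbud and Schreyer use to describe the facets of the cone in codimension~$2$, so that $S(\mu,\nu)\ge 0$ holds on the whole cone. The subtlety is that the relevant indices are selected by the $\max$/$\min$ rules applied to the signs of $A(\mu,\nu)=a_\nu-e_{\mu+\nu}$ and $B(\mu,\nu)=e_{\mu+\nu-1}-b_\mu$, so in place of one linear functional one must show that, for each admissible cut, the corresponding $S$ is nonnegative on the cone and that the selection rule always lands on an admissible cut. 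Concretely I would (i) verify conditions~(1)--(2) directly for each pure diagram $\pi_\tau$, where $\vec a$, $\vec e$, $\vec b$ are explicit and the inequalities reduce to elementary arithmetic in $\tau$, and (ii) show they propagate under adding a pure diagram further along the chain, using that such an addition moves $A$, $B$, and $S$ monotonically and in a controlled way.

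The main obstacle is step~(ii) and, more broadly, matching the sorted-splitting-type combinatorics of $A$, $B$, $S$ with the facet structure of the codimension~$2$ Boij--S\"oderberg cone; once that is done, Theorem~\ref{thm:realizability_intro} yields an honest short exact sequence over $k$, hence, by the first paragraph, an honest graded module whose minimal free resolution has Betti diagram $D$. Integrality of $D$ enters only through the requirement that $\vec a$, $\vec e$, $\vec b$ be integer splitting types. Finally, the degenerate strata of the cone, where the resolution has length less than $2$, correspond to free modules and to modules of the form $\bigoplus_i\bigl(S/(f_i)\bigr)(-d_i)$ and are classical; combining these with the codimension~$2$ case gives the full projective dimension~$2$ statement, while the reverse inclusion---that every genuine Betti diagram lies in the cone---is the theorem of Eisenbud and Schreyer.
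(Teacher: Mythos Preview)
Your plan diverges substantially from the paper's, and the gap you flag at step~(ii) is the crux. The paper bypasses it with a two-line scaling argument: any integral point $\beta$ in the cone is a positive rational combination of pure diagrams, so after clearing denominators some integer multiple $M\beta$ is a nonnegative \emph{integer} combination of pure diagrams and is therefore the Betti table of an actual module (the corresponding direct sum of modules with pure resolutions). Hence the triple attached to $M\beta$ is realizable. The remaining step is that the realizability criterion of Theorem~\ref{thm:realizability_intro} descends under this scaling: passing from $\beta$ to $M\beta$ replaces each of $\vec{a},\vec{e},\vec{b}$ by the splitting type with every entry repeated $M$ times, and the quantities $A,B,S$ at suitably rescaled indices recover their original values (up to the factor $M$), so the inequalities for $M\beta$ force those for $\beta$.

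Your route instead tries to verify the inequalities directly on the cone, pure diagram by pure diagram. The difficulty you name is real: the conditions of Theorem~\ref{thm:realizability_intro} are not linear functionals in $D$, because $A,B,S$ are defined through the \emph{sorted} splitting types and the index $\nu$ at which one must test $S(\mu,\nu+1)\geq 0$ is selected by a $\max$/$\min$ rule depending on the signs of $A$ and $B$. Adding a pure diagram along the chain can reshuffle the sort order and shift which inequality is being tested, so the monotonicity you hope for is far from automatic. The scaling trick avoids all of this structure: it uses only that some multiple is manifestly realizable and that the criterion is insensitive to repeating every summand $M$ times.

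A side remark on minimality: zeroing out unit entries in a given realizing sequence need not preserve exactness (an identity summand $\cO(e)\to\cO(e)$ is the obvious obstruction), so your proposed check would not go through as stated. One must instead argue that a realization without unit entries exists, not that one can delete them after the fact; the paper likewise takes this dictionary for granted.
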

\begin{proof}
    Let $\beta$ be a virtual Betti diagram, i.e. an integral lattice point in the cone of Betti diagrams. Then $\beta$ can uniquely be written as a positive rational combination of pure Betti diagrams. In particular, by taking the positive integer multiple $M$ clearing the denominator in all of the rational coefficients, we see that $M\beta$ is the Betti diagram of a genuine module, corresponding to the direct sum of the pure Betti diagrams with the multiplicities as prescribed by the coefficients. However, the realizability criterion above tells us that if $M \beta$ is realizable, then so is $\beta$.
\end{proof}

\subsection{Geometric properties of \texorpdfstring{$\QuotLF$}{the locally free locus}}
Our first geometric result is a characterization of the components of $\QuotLF$. The key observation is that for a given component of $\QuotLF$, there is a generic behavior for the splitting types of the kernel and quotient bundles, which must be distinguished among all possible realizable triples appearing in $\QuotLF$.

\begin{definition}[Combinatorially stable pair] \label{def:combo_stable_intro}
    A combinatorially stable pair is associated to the following package of data:
    \begin{enumerate}
        \item Nonnegative integers $m', n'$, such that $0 \leq m' \leq m, 0 \leq n' \leq n$;
        \item A partition $\Lambda=(P_1,Q_1,\dots,P_r,Q_r)$ of $(n'+1,\dots,m+n-m')$ into $2r$ parts such that $e_{P_{i,\text{last}}}<e_{Q_{i+1,1}}$ for each $1\leq i\leq r-1$. Given such data, we define 
        \begin{align*}
            \Delta P_i & = \sum_{1\leq j\leq |P_i|}(e_{P_{i,j}}-e_{P_{i,1}-1}-1)\\ 
            \Delta Q_i &= \sum_{1\leq j\leq |Q_i|}(e_{Q_{i,\last}+1}-e_{Q_{i,j}}-1). 
        \end{align*}
        
        \item A tuple $\vec{\delta}=(\delta_1,\dots,\delta_r)$, where $0 \leq \delta_i \leq \min(\Delta Q_i, \Delta P_i)$.
    \end{enumerate}
Given such data, let $\alpha(Q_i,\delta_i)$ be the most balanced tuple that is at least $e_{Q_i}$ in each entry and has $\sum_j(\alpha(Q_i,\delta_i)_j-e_{Q_{i,j}})=\delta_i$. Similarly, let $\beta(P_i,\delta_i)$ be the most balanced tuple that is at most $e_{P_i}$ in each entry and has $\sum_j(e_{P_{i,j}} -\beta(P_i,\delta_i)_j)=\delta_i$. Then the \emph{combinatorially stable pair} associated to the package above is obtained by concatenating these balanced tuples as follows:
\begin{align*}
\vec{a} &= (e_1, \dots, e_{n'}, \alpha(Q_1, \delta_1), \dots, \alpha(Q_r, \delta_r)) \\ 
\vec{b} &= (\beta(P_1, \delta_1), \dots, \beta(P_r, \delta_r), e_{m+n-m'+1}, \dots, e_{m+n})
\end{align*}
\end{definition}

\begin{definition}[Strongly stable pairs]
    A \emph{strongly stable pair} is a combinatorially stable pair satisfying the additional conditions that 
    \begin{align*}
    e_{n'} & < \beta(P_1, \delta_1)_1, \\ 
    \alpha(Q_i, \delta_i)_{\last} & < \beta(P_{i+1}, \delta_{i+1})_1 \text{ for } 1 \leq i < r,\\ 
    \alpha(Q_r, \delta_r)_{\last}& < e_{m+n-m'+1}. 
    \end{align*} 
\end{definition}
Note that in the body of this article, this characterization will be stated as a theorem instead of the definition for strongly stable pairs.

\begin{theorem}[Theorem \ref{thm:strongly_stable}] \label{thm:intro_components}
By taking the generic splitting types for the kernel and quotient bundles, components of $\QuotLF$ correspond exactly to the strongly stable pairs.
\end{theorem}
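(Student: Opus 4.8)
The plan is to stratify $\QuotLF$ by the splitting types $(\vec b,\vec a)$ of the kernel and quotient, and to show that (i) each stratum is irreducible, (ii) the closure order among strata is governed by the balancing partial order, and (iii) the maximal strata under this order are exactly those indexed by strongly stable pairs. For step (i), fix a realizable triple $(\vec b,\vec e,\vec a)$ and consider the locally closed subscheme $\QuotLF_{(\vec b,\vec a)}$ of quotients whose kernel has type $\vec b$; a point is a surjection $\cO(\vec e)\twoheadrightarrow\cO(\vec a)$ up to $\Aut(\cO(\vec a))$, equivalently an injection $\cO(\vec b)\hookrightarrow\cO(\vec e)$ with locally free cokernel up to $\Aut(\cO(\vec b))$. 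The space of all maps $\cO(\vec b)\to\cO(\vec e)$ is the affine space $\Hom(\cO(\vec b),\cO(\vec e))=\bigoplus_{i,j}H^0(\cO(e_j-b_i))$, the locus with locally free cokernel of the prescribed type is open inside the locus of injective maps, and quotienting by the connected group $\Aut(\cO(\vec b))\times\Aut(\cO(\vec a))$ (acting with the claimed fibers) preserves irreducibility; hence each nonempty stratum is irreducible. Its closure is therefore a candidate irreducible component, and the components of $\QuotLF$ are exactly the closures of the strata that are maximal under specialization.

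Next I would identify the specialization order. If a family of quotients over a DVR has generic kernel type $\vec b^{\mathrm{gen}}$ and special kernel type $\vec b^{0}$, then by semicontinuity of $h^0$ applied to the kernel bundle we get $\vec b^{0}\le\vec b^{\mathrm{gen}}$ in the balancing order, and dually $\vec a^{0}\le\vec a^{\mathrm{gen}}$; conversely, given two realizable triples $(\vec b^{0},\vec e,\vec a^{0})\le(\vec b^{\mathrm{gen}},\vec e,\vec a^{\mathrm{gen}})$ one builds a one-parameter degeneration inside $\QuotLF$ by deforming the map $\cO(\vec b^{0})\hookrightarrow\cO(\vec e)$ — this is where the explicit realizability criterion of Theorem \ref{thm:realizability_intro} (and its reformulation Theorem \ref{thm:realzable_iff_balancing} in terms of balancing) does the real work, since it tells us precisely which more-balanced triples are still realizable and lets us exhibit the intermediate triple in the closure. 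So $\QuotLF_{(\vec b,\vec a)}$ is a component iff the triple $(\vec b,\vec e,\vec a)$ is realizable and is \emph{maximal} among realizable triples in the balancing order: one cannot make either $\vec a$ or $\vec b$ strictly more balanced while keeping the triple realizable.

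The final and most technical step is the purely combinatorial translation: a realizable triple is maximal in the balancing order if and only if it arises from a strongly stable pair. Here I would start from an arbitrary realizable triple and "balance it as much as possible." First, entries of $\vec e$ that are forced to remain in $\vec a$ (the smallest $n'$ of them) or in $\vec b$ (the largest $m'$ of them) get peeled off — maximality pins down $n'$ and $m'$. The remaining indices $\{n'+1,\dots,m+n-m'\}$ get grouped according to which block of the short exact sequence they feed into; the condition $e_{P_{i,\last}}<e_{Q_{i+1,1}}$ and the strict inequalities $e_{n'}<\beta(P_1,\delta_1)_1$, $\alpha(Q_i,\delta_i)_{\last}<\beta(P_{i+1},\delta_{i+1})_1$, $\alpha(Q_r,\delta_r)_{\last}<e_{m+n-m'+1}$ are exactly the statements that the blocks cannot be merged or shifted to produce a strictly more balanced realizable triple — i.e. they encode local maximality. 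Within a block, $\alpha(Q_i,\delta_i)$ and $\beta(P_i,\delta_i)$ are taken as balanced as the degree budget $\delta_i$ allows, which is forced by maximality in the balancing order, and the realizability criterion of Theorem \ref{thm:realizability_intro} reduces inside each block to the constraint $0\le\delta_i\le\min(\Delta P_i,\Delta Q_i)$. Conversely, one checks that every strongly stable pair yields a realizable triple (verifying conditions (1)--(2) of Theorem \ref{thm:realizability_intro} directly from the block structure) and that no balancing move is possible. I expect this last combinatorial bookkeeping — showing that the strict block inequalities are precisely the obstructions to further balancing, in both directions — to be the main obstacle, and it is presumably where the bulk of the argument in Theorem \ref{thm:strongly_stable} is spent.
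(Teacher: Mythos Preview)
Your overall plan---stratify by $(\vec b,\vec a)$, argue each stratum is irreducible, and identify components with closure-maximal strata---is reasonable, but it diverges from the paper's route at the decisive step and leaves a genuine gap.

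The paper does \emph{not} attempt to identify the closure order on strata with the product of balancing orders. Instead, it goes through a tangent-space argument (Proposition~\ref{prop:components_are_strongly_stable}): at a point of $\Sigma_{\vec b}(\cS)\cap\Sigma_{\vec a}(\cQ)$ the tangent space to $\QuotLF$ is $\Hom(\cO(\vec b),\cO(\vec a))$, and since each $\Sigma_{\vec b}(\cS)$ and $\Sigma_{\vec a}(\cQ)$ is smooth and irreducible of known dimension (Proposition~\ref{prop:strataQuotLF}), the stratum is open in $\QuotLF$ precisely when $\hom(\cO(\vec b),\cO(\vec a))$ equals that dimension. This dimension equality is the paper's \emph{geometric} definition of ``strongly stable''; the combinatorial block inequalities are then shown (Theorem~\ref{thm:strongly_stable}, via Lemma~\ref{lem:block_overlapping_fails_strongly}) to encode exactly this dimension match, not any statement about further balancing moves.

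Your argument is missing this ingredient, and the gap shows up concretely. You assert that for any two realizable triples comparable in the product balancing order one can build a one-parameter degeneration inside $\QuotLF$; this is not proved and is not obvious when both $\vec a$ and $\vec b$ change simultaneously. More seriously, your combinatorial translation conflates two different notions. Your phrase ``one cannot make either $\vec a$ or $\vec b$ strictly more balanced while keeping the triple realizable'' and your talk of ``local maximality'' describe the paper's \emph{stable} pairs (Definition~\ref{def:stable_pairs}, Theorem~\ref{thm:stable_pairs}), not the strongly stable ones. Example~\ref{ex:worked_out_stable_pairs} exhibits a stable pair $(\vec b,\vec a)=((4,4,7),(0,7,13))$ that satisfies your local-maximality description yet is strictly dominated in the product order by the strongly stable $((4,5,6),(0,8,12))$; its stratum has dimension $35$ but tangent space $36$, so it is \emph{not} a component. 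The strict block inequalities you cite are precisely what rules out such cases, but they do so via the $\hom$ computation, not via an obstruction to balancing---so your proposed combinatorial bookkeeping would not land on the right condition without reintroducing the tangent-space comparison.
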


The list of strongly stable pairs for a fixed Quot scheme can be obtained efficiently. Below, we list all the strongly stable pairs for an explicit example.
\begin{example}[Example \ref{ex:worked_out_stable_pairs}]
    In the case of $\vec{e} = (0,4,5,6,8,12), d = 20, n = 3$, there are five strongly stable pairs, corresponding to five components of $\Quot^{3,20}(\cO(\vec{e}))^{\circ}$. We also show that every component of $\QuotLF$ is generically smooth (Proposition \ref{prop:strataQuotLF}). As such, the dimension of a component $Z_{\vec{b}, \vec{a}}$ corresponding to the pair $(\vec{b}, \vec{a})$ should equal the dimension of the tangent space of a generic point of the component, which is $ \hom(\cO(\vec{b}), \cO(\vec{a}))$. We use this to compute the dimension of $Z_{\vec{b}, \vec{a}}$ below.
    \begin{align*}
        & \vec{b}=(4,5,6), \ \vec{a}=(0,8,12), \dim Z_{\vec{b}, \vec{a}} = 36\\ 
        & \vec{b}=(5,5,5),  \ \vec{a}=(0,4,16),  \dim Z_{\vec{b}, \vec{a}} = 36 \\ 
        & \vec{b}=(0,3,12),  \ \vec{a}=(6,6,8), \dim Z_{\vec{b}, \vec{a}} = 37\\ 
        & \vec{b}=(1,2,12),  \ \vec{a}=(0,10,10),  \dim Z_{\vec{b}, \vec{a}} = 38\\ 
        & \vec{b}=(-5,8,12),  \ \vec{a}=(6,7,7), \dim Z_{\vec{b}, \vec{a}} = 38
    \end{align*}
    As can be seen from this example, $\QuotLF$ is in general not equidimensional.
\end{example}

As a corollary, we obtain an explicit range for the degrees at which $\QuotLF$ is irreducible. Let $d' = \deg \cO(\vec{e}) - d$, which is the degree of the kernel bundle.
\begin{corollary} \label{cor:intro_irreducibility}
    $\QuotLF$ is irreducible if $d\geq n(e_{m+n}-1)+1$ and $d' \leq m(e_1+1)-1$.
\end{corollary}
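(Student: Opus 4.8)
The plan is to combine the characterization of components in Theorem~\ref{thm:intro_components} with a combinatorial collapse of the data defining a strongly stable pair: by that theorem $\QuotLF$ is irreducible exactly when there is a unique strongly stable pair, so after this reduction the argument is purely combinatorial. First I would identify the expected unique pair. It is the combinatorially stable pair with $m'=n'=0$ and $r=1$, so that $\Lambda$ is the trivial partition $P_1=(1,\dots,m)$, $Q_1=(m+1,\dots,m+n)$ of the whole index set, with $\delta_1$ the unique value making $\deg\vec a=d$ (equivalently $\deg\vec b=d'$); concretely $\vec a=\alpha(Q_1,\delta_1)$ is the most balanced splitting type of rank $n$, degree $d$, that is entrywise $\ge(e_{m+1},\dots,e_{m+n})$, and $\vec b=\beta(P_1,\delta_1)$ is the most balanced of rank $m$, degree $d'$, that is entrywise $\le(e_1,\dots,e_m)$. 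Realizability of $\trip$ via Theorem~\ref{thm:realizability_intro} is then immediate: since $a_\nu\ge e_{m+\nu}\ge e_{\mu+\nu}$ for every $\mu\le m$ we have $A(\mu,\nu)\ge 0$ throughout, and since $b_\mu\le e_\mu\le e_{\mu+\nu-1}$ we have $B(\mu,\nu)\ge 0$ throughout, so conditions (1) and (2) are vacuous. Because $P_1$ is the first block and $Q_1$ the last, $\Delta P_1$ and $\Delta Q_1$ impose no constraint and the strong-stability inequalities are automatic for $n'=m'=0$, $r=1$; the only thing to verify is $\delta_1\ge 0$, which I would deduce from the two degree hypotheses. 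Thus this pair is a strongly stable pair, and it remains to show it is the only one.

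Uniqueness is the heart of the matter, and I would carry it out by collapsing $m'$, $n'$, and $r$ in turn. The bound $d\ge n(e_{m+n}-1)+1$ forces $m'=0$: if $m'\ge 1$ then, using the strong-stability inequalities $\alpha(Q_i,\delta_i)_{\last}<\beta(P_{i+1},\delta_{i+1})_1\le e_{P_{i+1,1}}\le e_{m+n}$ for $i<r$, $\alpha(Q_r,\delta_r)_{\last}<e_{m+n-m'+1}\le e_{m+n}$, and $e_{n'}<e_{n'+1}\le e_{m+n}$, every entry of $\vec a$ is $\le e_{m+n}-1$, so $d=\deg\vec a\le n(e_{m+n}-1)$, a contradiction. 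Dually, once $m'=0$, the bound $d'\le m(e_1+1)-1$ forces $n'=0$: if $n'\ge 1$ then strong stability gives $\beta(P_1,\delta_1)_1>e_{n'}\ge e_1$ and, for $i\ge 2$, $\beta(P_i,\delta_i)_1>\alpha(Q_{i-1},\delta_{i-1})_{\last}\ge e_1$; since $m'=0$ there are no trailing copies of the $e_j$ in $\vec b$, so every entry of $\vec b$ is $\ge e_1+1$ and $d'=\deg\vec b\ge m(e_1+1)$, again a contradiction. (In the degenerate case $r=0$ one has $\vec a=(e_1,\dots,e_n)$, $\vec b=(e_{n+1},\dots,e_{m+n})$, and the two degree hypotheses then force $\vec e$ to be balanced, in which case $\QuotSES$ is already irreducible by \cite{Stromme} and there is nothing to prove.)

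It then remains to rule out $r\ge 2$ when $m'=n'=0$, and this is where I expect the main difficulty. With $m'=n'=0$ the partition cuts $(1,\dots,m+n)$ into alternating blocks $P_1,Q_1,\dots,P_r,Q_r$; for each $i<r$ every $\alpha$-value of $Q_i$ is squeezed below $e_{Q_{i,\last}+1}\le e_{m+n-1}$, and for each $i\ge 2$ every $\beta$-value of $P_i$ is squeezed above $e_{P_{i,1}-1}\ge e_2$. Feeding these bounds, the strict separation inequalities $\alpha(Q_i,\delta_i)_{\last}<\beta(P_{i+1},\delta_{i+1})_1$, and the degree identities $\deg\vec a=d$, $\deg\vec b=d'$ back into the two size hypotheses, I expect a numerical contradiction for every $r\ge 2$: roughly, the bounded ``interior'' blocks force $\deg\vec a$ to fall short of $n(e_{m+n}-1)+1$ and $\deg\vec b$ to exceed $m(e_1+1)-1$ simultaneously. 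Once $r=1$ is forced the combinatorial data is completely rigid — $P_1=(1,\dots,m)$, $Q_1=(m+1,\dots,m+n)$, and $\delta_1$ determined by $\deg\vec a=d$ — so the strongly stable pair is unique and $\QuotLF$ is irreducible. The hard part of the whole argument is thus the bookkeeping in this last step: turning the telescoping chain of strict separation inequalities for $r\ge 2$ into a clean contradiction with the stated bounds, and, relatedly, verifying in the first step that those bounds really do give $\delta_1\ge 0$ for the candidate pair.
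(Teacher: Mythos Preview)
Your approach differs from the paper's. The paper does not argue directly from the combinatorial description of strongly stable pairs; instead it first proves a general equivalence (the unnumbered theorem in the irreducibility section): $\QuotLF$ is irreducible if and only if the most balanced $\vec a$ admitting a surjection from $\cO(\vec e)$ and the most balanced $\vec b$ admitting a locally-free-cokernel injection into $\cO(\vec e)$ --- each computed independently via Lemma~\ref{lem:most_balanced} --- together form a realizable triple. The mechanism behind uniqueness is Lemma~\ref{lem:greater_pair}: once such a doubly-most-balanced realizable pair exists, every other realizable pair $(\pvec b',\pvec a')$ has $\pvec a'\le\vec a$ and $\pvec b'\le\vec b$, and the lemma immediately excludes it from being strongly stable. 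The corollary is then obtained by checking that the two degree bounds force the explicit condition~(3) of that theorem. Your route buys a self-contained argument that stays inside the language of Theorem~\ref{thm:strongly_stable}; the paper's route buys a one-line uniqueness step (no case analysis on $m',n',r$) at the cost of first establishing the general equivalence and Lemma~\ref{lem:most_balanced}.

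There is also a genuine gap. Your candidate pair is not the one the paper singles out: you take $P_1=(1,\dots,m)$, $Q_1=(m+1,\dots,m+n)$, which forces $\delta_1=d-\sum_{i>m}e_i$, whereas the paper's candidate is $\vec a=\alpha((n'+2,\dots,n+1),f(n'))$, $\vec b=\beta((n,\dots,m+n-m'-1),g(m'))$ as in Lemma~\ref{lem:most_balanced}. When the top (or bottom) of $\vec e$ has repeated entries --- say $e_{m+1}=\cdots=e_{m+n}$ --- your $\delta_1$ can be negative even though both degree bounds hold, so the sentence ``$\delta_1\ge 0$, which I would deduce from the two degree hypotheses'' cannot be completed as written. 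The same phenomenon blocks your $r\ge 2$ step: your heuristic bounds only the interior $\alpha$-blocks $Q_1,\dots,Q_{r-1}$ below $e_{m+n}$, leaving $\alpha(Q_r,\delta_r)$ unconstrained (since $m'=0$), and dually for $\vec b$; so with $r=2$ one can put all the excess degree in the outer blocks and meet both hypotheses without contradiction (try $\vec e=(0,0,10,10)$, $m=n=2$, $d=19$). The paper sidesteps this entirely by invoking Lemma~\ref{lem:greater_pair} rather than analysing the partition data.
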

It would be interesting to check if $\QuotSES$ itself is irreducible when the above bounds are satisfied. Note that in general there are other intermediate degrees where $\QuotLF$ is irreducible. However, we expect those degrees to be coincidental, resulting from the omission of components where the quotient sheaf is not locally free generically.

Using our understanding of the components of $\QuotLF$, we prove the following:
\begin{theorem}\label{thm:intro_connected}
    For all $\vec{e}, n, d$, $\QuotLF$ is connected.
\end{theorem}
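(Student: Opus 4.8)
The plan is to study the \emph{dual graph} of $\QuotLF$, whose vertices are the irreducible components --- equivalently, by Theorem~\ref{thm:intro_components}, the strongly stable pairs $(\vec b,\vec a)$ --- and whose edges join components with nonempty intersection. Since a variety is connected exactly when its dual graph is connected, it suffices to connect any two strongly stable pairs by a chain in which consecutive components meet. The two tools I would use throughout are the stratification of $\QuotLF$ by the splitting types $(\vec b',\vec a')$ of the kernel and quotient bundles, and the realizability criterion of Theorem~\ref{thm:realizability_intro}, which pins down exactly which degenerate triples $(\vec b',\vec e,\vec a')$ actually occur in $\QuotLF$.

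First I would identify which boundary strata $S_{\vec b',\vec a'}$ lie in a given component $Z_{\vec b,\vec a}$: the expectation, already built into the passage from combinatorially stable to strongly stable pairs, is that $Z_{\vec b,\vec a}$ is the union of the strata $S_{\vec b',\vec a'}$ for which $(\vec b',\vec e,\vec a')$ is realizable and $(\vec b',\vec a')$ degenerates $(\vec b,\vec a)$ compatibly with its combinatorial packaging (in particular $\vec b\ge\vec b'$ and $\vec a\ge\vec a'$ in the balancing order). The ``only if'' inclusion is semicontinuity; for the ``if'' inclusion I would write down, for each elementary degeneration of the balanced blocks $\alpha(Q_i,\delta_i)$ and $\beta(P_i,\delta_i)$, an explicit one-parameter family of short exact sequences on $\bP^1$ whose special fiber realizes $(\vec b',\vec a')$ and whose general fiber realizes $(\vec b,\vec a)$, using Theorem~\ref{thm:realizability_intro} to check the intermediate triples along the family are legal.

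Next I would introduce a short list of \emph{elementary moves} on the package $(m',n',\Lambda=(P_1,Q_1,\dots,P_r,Q_r),\vec\delta)$ of Definition~\ref{def:combo_stable_intro}: decreasing some $\delta_i$; merging two adjacent blocks at the moment the separating inequality $\alpha(Q_i,\delta_i)_{\last}<\beta(P_{i+1},\delta_{i+1})_1$ degenerates into an equality; and enlarging $n'$ or $m'$ by absorbing the boundary index of the first or last block once $e_{n'}<\beta(P_1,\delta_1)_1$, resp. $\alpha(Q_r,\delta_r)_{\last}<e_{m+n-m'+1}$, degenerates into an equality. In each case, exactly at the value where the relevant strict inequality becomes an equality the degenerate splitting type $(\vec b',\vec a')$ is simultaneously in the closures of the ``before'' and ``after'' components (by the description of boundary strata applied to each), so these two components intersect. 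One then equips the strongly stable pairs with a complexity $\big(-(m'+n'),\ \sum_i\delta_i,\ r\big)$ ordered lexicographically, checks that any non-minimal pair admits an elementary move strictly decreasing it (here Theorem~\ref{thm:realizability_intro} is invoked once more, to guarantee the degenerate intermediate triple exists so the move is legal), and verifies directly that the finitely many minimal pairs for a fixed $(\vec e,n,d)$ are pairwise connected --- plausibly there is just one, obtained by pushing as much degree as eligibility permits into the extreme summands. This chains every component to a fixed one, proving connectedness.

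The main obstacle is the explicit family construction together with the combinatorial verification underlying the two intersection claims. Merging adjacent blocks (or enlarging $n'$ or $m'$) alters the partition $\Lambda$ and rebalances $\alpha$ and $\beta$, and one must check that the resulting intermediate triple still satisfies conditions (1)--(2) of Theorem~\ref{thm:realizability_intro}; the interaction of the balancing operations with the sums $S(\mu,\nu)$ is delicate, and one has to ensure that the stratum exhibited inside two closures is actually nonempty. Establishing that the boundary of $Z_{\vec b,\vec a}$ is \emph{exactly} the expected union of strata --- rather than merely containing the strata used in the moves --- would require the full strength of Theorem~\ref{thm:strongly_stable}; for connectedness alone it is enough to produce the specific common strata, which is the more modest task I would prioritize.
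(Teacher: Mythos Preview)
Your high-level strategy --- connect every component to a distinguished one by a chain of moves on the package $(m',n',\Lambda,\vec\delta)$ --- is the same as the paper's. But several of your proposed moves do not make sense as stated, and the paper's argument sidesteps exactly the ``main obstacle'' you identify.

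First, ``decreasing some $\delta_i$'' is not a move on $\QuotLF$: for fixed $m',n',\Lambda$ the sum $\sum_i\delta_i$ is determined by the degree $d$, so you can only \emph{redistribute} $\delta$ among blocks, not lower the total. Consequently your complexity function $\big(-(m'+n'),\sum_i\delta_i,r\big)$ cannot decrease in its second coordinate without changing $\Lambda$ or $m',n'$, and your moves do not obviously do this in a monotone way. The paper's version of this step (Lemma~5.4) is precisely a redistribution: replace $\vec\delta$ by $(\delta_1+1,\dots,\delta_i-1,\dots)$, which keeps $\sum\delta_i$ fixed but strictly \emph{balances} $\vec a$; the invariant that is actually driven up is the balancing order on $\vec a$, not any of the three quantities you track.

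Second, you plan to produce, for each elementary move, an explicit one-parameter family of short exact sequences and then to verify that the resulting intermediate triple satisfies the $S(\mu,\nu)$ inequalities --- and you flag this as the hard part. The paper avoids this entirely with a single clean lemma: if $(\vec b,\vec a)$ and $(\pvec b',\pvec a')$ are stable with $\pvec a'\le\vec a$ and $\vec b\le\pvec b'$, then the \emph{mixed} triple $(\vec b,\vec e,\pvec a')$ is automatically realizable, because $S_{\vec b,\pvec a'}(\mu,\nu)\ge\max\big(S_{\vec b,\vec a}(\mu,\nu),S_{\pvec b',\pvec a'}(\mu,\nu)\big)$, and its stratum lies in both closures. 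No families are constructed; realizability of the common stratum is a two-line consequence of monotonicity of $S$ in the entries. The remaining ingredient is \emph{iterative balancing}: from any realizable pair, alternately replace $\vec a$ by the generic cokernel for the current $\vec b$ and vice versa, until the result stabilizes to a stable pair; all intermediate strata are nested in closures, so connectedness is preserved. With these two tools the paper reduces first to $\delta_i=0$ for $i>1$, then (by a single explicit swap, Lemma~5.5) to $r=1$, and finishes by comparing any one-block pair to the pair with most balanced $\vec a$. In particular the full boundary description you aim for, and the uniqueness/connectedness of your ``minimal'' pairs, are never needed.
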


\noindent
{\textbf{Acknowledgements.}} 
    Eric Larson suggested that realizability may be equivalent to the existence of a balancing of $\vec{b}$ against $\vec{a}$ to obtain $\vec{e}$, which turned into Theorem \ref{thm:realzable_iff_balancing}. Daniel Erman pointed out the connection of our results to Boij-S\"oderberg theory. We thank David Eisenbud, Frank-Olaf Schreyer and Hannah Larson for valuable discussions. Part of this work was conducted while the first author was supported by NSF Grant DMS 2001649. 

\section{Realizable Short Exact Sequences on \texorpdfstring{$\bP^1$}{the projective line}}
In this section, we give necessary and sufficient conditions for a triple of splitting types $(\vec{b}, \vec{e}, \vec{a})$ to be \textit{realizable}, i.e. for them to arise in a short exact sequence of vector bundles on $\bP^1$ like so:
\begin{equation} \label{eq:ses}
    0 \to \cO(\vec{b}) \to \cO(\vec{e}) \to \cO(\vec{a}) \to 0.
\end{equation}

\subsection{Necessary Conditions}
Let $\vec{b}=(b_1,\dots,b_m), \vec{e}=(e_1,\dots,e_{m+n}), \vec{a}=(a_1,\dots,a_n)$. Throughout this paper, we think of the quotient map $\cO(\vec{e})\to\cO(\vec{a})$ as an $n \times (m+n)$ matrix whose $(i,j)$-th entry is a homogeneous polynomial of degree $a_i-e_j$ in $k[x,y]$, and similarly for the kernel map $\cO(\vec{b}) \to \cO(\vec{e})$. In order for $\trip$ to be realizable, we need:
\begin{enumerate}
    \item $\rk \vec{a} + \rk \vec{b} = \rk \vec{e}$;
    \item $\deg \vec{a} + \deg \vec{b} = \deg \vec{e}$;
    \item there exists an injective map $\cO(\vec{b}) \to \cO(\vec{e})$ with locally free cokernel;
    \item there exists a surjective map $\cO(\vec{e}) \to \cO(\vec{a})$.
\end{enumerate}
As a convenient way to refer to this package of conditions, we say that a triple $\trip$ is \textit{eligible} if it satisfies all four conditions above. The latter two conditions are also numerical in terms of the entries of $\trip$, due to the following proposition:
\begin{proposition}[Theorem A.1.1, Hong-Larson] \label{prop:onlyif}
Let $\vec{e} = (e_1, \dots, e_r)$, $\vec{a} = (a_1, \dots, a_k)$ where $k < r$, $e_i \leq e_{i+1}$, $a_i \leq a_{i+1}$. Then there exists a surjection $\cO(\vec{e}) \twoheadrightarrow \cO(\vec{a})$ if and only if for all $1 \leq i \leq k$, either $a_i \geq e_{i+1}$, or for all $1 \leq j \leq i$, $a_j = e_j$.

Dually, let $\vec{b} = (b_1, \dots, b_k)$. Then there exists an injective map of vector bundles $\cO(\vec{b}) \hookrightarrow \cO(\vec{e})$ (i.e. the cokernel is a vector bundle) if and only if for all $1 \leq i \leq k$, either $b_i \leq e_{r-k+i-1}$, or for all $i \leq j \leq k$, $b_j = e_{r-k+j}$. \qed
\end{proposition}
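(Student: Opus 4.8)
The plan is to deduce the two halves of the statement from one another by dualizing vector bundles on $\bP^1$, and then to prove the half about surjections by a short rank count (for necessity) together with an explicit matrix (for sufficiency).

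\emph{Reducing to surjections.} A map $\cO(\vec b)\to\cO(\vec e)$ has locally free cokernel exactly when it is injective on every fiber; dualizing by $E\mapsto E^{\vee}$ then yields a fiberwise surjection $\cO(\vec e)^{\vee}\twoheadrightarrow\cO(\vec b)^{\vee}$, and conversely the kernel of any surjection of vector bundles on the smooth curve $\bP^1$ is torsion-free, hence locally free, so that sequence dualizes back to an honest subbundle inclusion. Since $\cO(e_1,\dots,e_r)^{\vee}\cong\cO(-e_r,\dots,-e_1)$ and $\cO(b_1,\dots,b_k)^{\vee}\cong\cO(-b_k,\dots,-b_1)$ (now written in increasing order), I would substitute these into the surjection criterion and relabel $j\mapsto k+1-j$ on the $\vec b$-indices and $j\mapsto r+1-j$ on the $\vec e$-indices; a direct check turns ``$a_i\geq e_{i+1}$, or $a_j=e_j$ for all $j\leq i$'' into exactly ``$b_i\leq e_{r-k+i-1}$, or $b_j=e_{r-k+j}$ for all $i\leq j\leq k$''. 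So it suffices to prove that a surjection $\cO(\vec e)\twoheadrightarrow\cO(\vec a)$ exists iff the first condition holds.

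\emph{Necessity.} Represent $\phi\colon\cO(\vec e)\to\cO(\vec a)$ by a $k\times r$ matrix $M$ with $M_{ij}\in H^0(\cO(a_i-e_j))$ (zero when $a_i<e_j$). Suppose $\phi$ is surjective and some index $i$ has $a_i<e_{i+1}$. For $i'\leq i$ and $j>i$ one has $a_{i'}\leq a_i<e_{i+1}\leq e_j$, so $M_{i'j}=0$; hence $\phi$ followed by the projection $\cO(\vec a)\to\bigoplus_{j\leq i}\cO(a_j)$ annihilates $\bigoplus_{j>i}\cO(e_j)$ and factors through a surjection $\psi\colon\bigoplus_{j\leq i}\cO(e_j)\to\bigoplus_{j\leq i}\cO(a_j)$. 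A surjection between bundles of equal rank on $\bP^1$ has a rank-$0$ subsheaf of a locally free sheaf as kernel, hence torsion-free of rank $0$, hence zero, so $\psi$ is an isomorphism; comparing splitting types and recalling both tuples are sorted gives $a_j=e_j$ for all $j\leq i$.

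\emph{Sufficiency.} First I would observe that the numerical hypothesis is equivalent to the existence of an integer $i_0$ with $0\leq i_0\leq k$, $a_j=e_j$ for $j\leq i_0$, and $a_i\geq e_{i+1}$ for $i_0<i\leq k$: take $i_0$ maximal with $a_j=e_j$ for all $j\leq i_0$, and use the hypothesis to exclude any later $i>i_0$ with $a_i<e_{i+1}$. If $i_0=k$ the coordinate projection $\cO(\vec e)\to\bigoplus_{j\leq k}\cO(e_j)=\cO(\vec a)$ is the desired map. Otherwise $k+1\leq r$, and I would take $M$ with $M_{ii}=1$ and the rest of row $i$ zero for $1\leq i\leq i_0$, and $M_{ii}=x^{a_i-e_i}$, $M_{i,i+1}=y^{a_i-e_{i+1}}$ and the rest of row $i$ zero for $i_0<i\leq k$ (both exponents are $\geq 0$ since $a_i\geq e_{i+1}\geq e_i$, so these are genuine sections; columns $k+2,\dots,r$ are zero). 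To check surjectivity I would exhibit two maximal minors with no common zero: the minor on columns $\{1,\dots,k\}$ is block diagonal, an $i_0\times i_0$ identity block together with an upper-bidiagonal block, with determinant $x^{N_1}$ where $N_1=\sum_{i>i_0}(a_i-e_i)\geq 0$; and the minor on columns $\{1,\dots,i_0\}\cup\{i_0+2,\dots,k+1\}$ is block diagonal, an identity block together with a lower-bidiagonal block, with determinant $y^{N_2}$ where $N_2=\sum_{i>i_0}(a_i-e_{i+1})\geq 0$. These are pure powers of $x$ and of $y$, vanishing only at $[0:1]$ and at $[1:0]$ respectively, so at every point of $\bP^1$ some maximal minor of $M$ is nonzero and $\phi$ is fiberwise surjective.

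\emph{Expected difficulty.} The duality reduction and the necessity direction are routine. The real content is the sufficiency construction: checking that the two chosen submatrices are genuinely block-triangular so their determinants collapse to the monomials $x^{N_1}$ and $y^{N_2}$, and attending to the degenerate sub-cases ($i_0=0$, $i_0=k$, vanishing exponents, and the all-zero columns $k+2,\dots,r$). None of this is deep, but it is the step where one must be careful.
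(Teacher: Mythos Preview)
The paper does not prove this proposition; it cites it from Hong--Larson and ends the statement with \qed. Your argument is therefore not a comparison target but an independent proof, and it is correct in all three parts: the duality reduction is the right index bookkeeping, the necessity argument via the forced zero block and the rank count is clean, and the explicit bidiagonal matrix with the two pure-monomial maximal minors does the job for sufficiency. Your construction is in fact a special case of the block matrices $G_k$ the paper builds later in Section~\ref{sec:construction_matrices}, so the approach is entirely in the spirit of the paper.

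One cosmetic point: in the sufficiency paragraph you call the second diagonal block ``lower-bidiagonal''. Strictly speaking, after deleting column $i_0+1$ and keeping columns $i_0+2,\dots,k+1$, row $i$ (for $i\ge i_0+2$) contributes $x^{a_i-e_i}$ in the position corresponding to original column $i$ and $y^{a_i-e_{i+1}}$ in the position for column $i+1$; re-indexed, that puts the $y$-entries on the diagonal and the $x$-entries on the subdiagonal, so the block is indeed lower-triangular with determinant $y^{N_2}$ as you claim. It would help the reader to say explicitly which entries land on the diagonal after the column shift, since at first glance one might worry the $x$-entries survive in the determinant.
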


\begin{figure}
\begin{minipage}{.4\textwidth}
  \centering
  \[
        \begin{bmatrix}
            1 & - & - & - & - & - & -\\ 
            * & 1 & - & - & - & - & -\\ 
            * & * & * & {\color{red}*} & - & - & -\\ 
            * & * & * & * & {\color{red}*} & - & -\\ 
            * & * & * & * & * & {\color{red}*} & -\\ 
        \end{bmatrix}
    \]
\end{minipage}
\begin{minipage}{.4\textwidth}
  \centering
   \[
        \begin{bmatrix}
            - & - & - & -  \\ 
            - & - & - & - \\ 
            {\color{red}*} & - & - & -  \\ 
            * & {\color{red}*} & - & -  \\ 
            * & * & - & -  \\ 
            * & * & 1 & -  \\ 
            * & * & * & 1  
        \end{bmatrix}
    \]
\end{minipage}
    \centering
    \caption{The shape of a matrix $\cO(e_1, \dots, e_7) \to \cO(a_1, \dots, a_5)$, which satisfies the proposition above via $a_1 = e_1, a_2 = e_2$, and $a_3 \geq e_4, a_4 \geq e_5, a_5 \geq e_6$. Similarly, we have a matrix $\cO(b_1,\dots, b_4) \to \cO(e_1, \dots, e_7)$ that satisfies the proposition above via $b_4 = e_7, b_3 = e_6, b_2 \leq e_4, b_1 \leq e_3$. An entry $1$ means that the labeled entry is a unit (degree is zero), and a star $*$ means that the entry can be filled with some nonzero polynomial (degree is nonnegative).} 
    \label{fig:matrixShape}
\end{figure}

We give two examples of the shapes of matrices satisfying the conditions above in Figure \ref{fig:matrixShape}. Visually, the conditions are saying that except for when we have identical entries, one is required to be able to fill entries lying on some minimal superdiagonal. 

While eligibility is necessary for realizability, it will often be mostly subsumed by other conditions required for realizability, so the following notion will be convenient:
\begin{definition}
    We say a triple $(\vec{b},\vec{e},\vec{a})$ is \textit{weakly eligible} if:
    \begin{enumerate}
        \item $\rk \vec{a} + \rk \vec{b} = \rk \vec{e}$;
        \item $\deg \vec{a} + \deg \vec{b} = \deg \vec{e}$;
        \item $a_i\geq e_i$ for all $1 \leq i \leq n$;
        \item $b_i\leq e_{n+i}$ for all $1\leq i\leq m$.
    \end{enumerate}
\end{definition}

In general, however, there are additional necessary conditions. Many forced zeros in the quotient matrix impose further constraints on the splitting type of the kernel, and vice versa when the kernel matrix has many zeros. To state this more precisely, let us first define a few quantities.
\begin{definition}
    Given $(\vec{b}, \vec{e}, \vec{a})$, for $1 \leq \mu \leq m, 1 \leq \nu \leq n$, let
\begin{align*}
    A(\mu, \nu) &=a_\nu - e_{\mu+\nu},\\
    B(\mu, \nu) &=e_{\mu+\nu-1} - b_\mu,\\
    S(\mu, \nu) &= \sum_{i=\nu}^n a_i + \sum_{i=\mu}^m b_i - \sum_{i=\mu+\nu-1}^{m+n}e_i, \\
    T(\mu, \nu) &= \sum_{i=1}^\nu a_i + \sum_{i=1}^\mu b_i - \sum_{i=1}^{\mu+\nu}e_i.
\end{align*}
We allow $\mu$ and $\nu$ to be $m+1$ and $n+1$ for $S(\mu,\nu)$ and we also allow either to be zero for $T(\mu,\nu)$. Note that for $0 \leq \mu \leq m, 0 \leq \nu \leq n$ we have $S(\mu+1,\nu+1)+T(\mu,\nu)=0$. 
\end{definition}

\begin{lemma}
\label{lem:necc} 
Suppose $\trip$ is realizable.
\begin{enumerate}
     \item Given $1 \leq \mu \leq m$, if there exists $1 \leq \nu' \leq n$ such that $A(\mu, \nu') < 0$, let 
     \[\nu = \max \{1 \leq \nu' \leq n: A(\mu, \nu') < 0\}.\] Then $S(\mu, \nu+1) \geq 0$; equivalently, $T(\mu-1, \nu) \leq 0$.
    \item Given $1 \leq \nu \leq n$, if there exists $1 \leq \mu' \leq m$ such that $B(\mu', \nu) < 0$, let 
    \[\mu = \min \{1 \leq \mu' \leq m: B(\mu', \nu) < 0\}.\] Then $S(\mu, \nu+1) \geq 0$; equivalently, $T(\mu-1, \nu) \leq 0$.
\end{enumerate}
\end{lemma}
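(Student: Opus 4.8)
The plan is to prove part (1) and obtain part (2) by the symmetry that exchanges $\cO(\vec b)\hookrightarrow\cO(\vec e)$ with the dual surjection $\cO(\vec e)^\vee\twoheadrightarrow\cO(\vec b)^\vee$ (i.e. negating and reversing all splitting types), which swaps the roles of $\vec a$ and $\vec b$ and sends $A$ to $B$; so I will focus exclusively on (1). Fix $\mu$ and let $\nu=\max\{\nu': A(\mu,\nu')<0\}$, so $a_\nu<e_{\mu+\nu}$. First I would record the monotonicity facts that make the indexing work: since $\vec a$ is nondecreasing and $\vec e$ is nondecreasing, $A(\mu,\nu')<0$ implies $a_{\nu''}\le a_{\nu'}<e_{\mu+\nu'}\le e_{\mu+\nu''}$ for $\nu''\le\nu'$, so the set of $\nu'$ with $A(\mu,\nu')<0$ is a down-set in $\nu'$ — equivalently $\{1,\dots,\nu\}$. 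In particular $a_j<e_{\mu+j}$ for all $1\le j\le\nu$, hence by Proposition~\ref{prop:onlyif} (applied to the surjection $\cO(\vec e)\twoheadrightarrow\cO(\vec a)$, whose matrix I think of as $n\times(m+n)$ with $(i,j)$-entry of degree $a_i-e_j$) the alternative "$a_i\ge e_{i+1}$" fails for every $i\le\nu$ in the relevant sense; more usefully, I want to extract from realizability a statement about which entries of the kernel matrix are forced.

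The heart of the argument is a rank/vanishing count on the two maps in the short exact sequence~\eqref{eq:ses}. Because $a_j<e_{\mu+j}$ for $j\le\nu$, in the quotient matrix $\cO(\vec e)\to\cO(\vec a)$ the entry in row $j$, column $\ell$ is a homogeneous polynomial of degree $a_j-e_\ell$, which is negative (hence zero) whenever $\ell\ge\mu+j$; so rows $1,\dots,\nu$ of the quotient matrix are supported in columns $1,\dots,\mu+\nu-1$. I would then argue that the composite $\cO(e_{\mu+\nu})\oplus\cdots\oplus\cO(e_{m+n})\hookrightarrow\cO(\vec e)\to\cO(\vec a)$ lands in the subbundle $\cO(a_{\nu+1})\oplus\cdots\oplus\cO(a_n)$ of $\cO(\vec a)$ spanned by the last $n-\nu$ coordinates — indeed the first $\nu$ coordinates of the image vanish on those summands by the degree computation just made. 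Dually, the kernel $\cO(\vec b)$ of $\cO(\vec e)\to\cO(\vec a)$ contains the kernel of the restricted surjection $\bigoplus_{i\ge \mu+\nu}\cO(e_i)\to\bigoplus_{i>\nu}\cO(a_i)$; and more to the point, pulling back along $\cO(\vec b)\hookrightarrow\cO(\vec e)$, the quotient $\cO(\vec e)/\cO(\vec b)\cong\cO(\vec a)$ receives the sub $\bigoplus_{i\ge\mu+\nu}\cO(e_i)$ with image inside $\bigoplus_{i>\nu}\cO(a_i)$, so $\bigoplus_{i\ge\mu+\nu}\cO(e_i)/\bigl(\text{its intersection with }\cO(\vec b)\bigr)$ injects into $\bigoplus_{i>\nu}\cO(a_i)$. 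Taking degrees of this injection of vector bundles on $\bP^1$ gives
\[
\deg\Bigl(\bigoplus_{i\ge\mu+\nu}\cO(e_i)\cap\cO(\vec b)\Bigr)\ \ge\ \sum_{i=\mu+\nu}^{m+n}e_i-\sum_{i=\nu+1}^{n}a_i,
\]
while that intersection is a subbundle of $\cO(b_1,\dots,b_m)$ of rank $m+n-(\mu+\nu-1)-(n-\nu)=m-\mu+1$ — so its degree is at most the degree of the most-balanced-on-top part, which I will bound by $\sum_{i=\mu}^{m}b_i$ using that $\vec b$ is nondecreasing (the top $m-\mu+1$ of the $b_i$, namely $b_\mu+\cdots+b_m$, majorize the degree of any corank-$(\mu-1)$ subbundle of $\cO(\vec b)$). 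Combining,
\[
\sum_{i=\mu}^m b_i\ \ge\ \sum_{i=\mu+\nu}^{m+n}e_i-\sum_{i=\nu+1}^{n}a_i,
\]
which rearranges exactly to $S(\mu,\nu+1)=\sum_{i=\nu+1}^n a_i+\sum_{i=\mu}^m b_i-\sum_{i=\mu+\nu}^{m+n}e_i\ge 0$. The equivalent form $T(\mu-1,\nu)\le 0$ then follows from the identity $S(\mu,\nu+1)+T(\mu-1,\nu)=0$ noted in the text.

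The main obstacle I anticipate is making rigorous the two subbundle claims I used as black boxes: (a) that the image of a direct summand of $\cO(\vec e)$ under the quotient map genuinely lies in a sub-direct-sum of $\cO(\vec a)$ determined by the forced zeros (this is clean once one fixes the matrix description, but one must be careful that the quotient map is only well defined up to $\operatorname{Aut}\cO(\vec a)$, which could mix summands of equal degree — however the vanishing $a_j-e_\ell<0$ is strict, so no automorphism can reintroduce those entries), and (b) the degree inequality "a rank-$s$ subbundle of $\cO(c_1\le\cdots\le c_m)$ has degree at most $c_{m-s+1}+\cdots+c_m$," which is the standard fact that the Harder–Narasimhan/balanced filtration maximizes degrees of subbundles of given rank and can be cited or proved in one line on $\bP^1$. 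A secondary subtlety is handling the intersection $\bigoplus_{i\ge\mu+\nu}\cO(e_i)\cap\cO(\vec b)$ as a subbundle rather than merely a subsheaf — since both are subbundles of the bundle $\cO(\vec e)$ and the quotient of the larger by the intersection injects into a bundle, the intersection is saturated, hence a subbundle, so this is fine. Once these points are nailed down the computation is the short rearrangement above.
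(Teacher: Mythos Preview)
Your approach is the same as the paper's in spirit --- use the forced zero block coming from $a_\nu<e_{\mu+\nu}$ to factor the restriction of the quotient map to $E':=\bigoplus_{i\ge\mu+\nu}\cO(e_i)$ through $A':=\bigoplus_{i>\nu}\cO(a_i)$, and then compare the kernel of that restricted map with a high-degree piece of $\cO(\vec b)$. However, there is a genuine gap in your execution.

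You assert that $E'/(E'\cap\cO(\vec b))\hookrightarrow A'$ and then ``take degrees'' to get $\deg(E'\cap\cO(\vec b))\ge\deg E'-\deg A'$, and separately compute $\operatorname{rk}(E'\cap\cO(\vec b))=m-\mu+1$. Both steps tacitly assume that the restricted map $E'\to A'$ has image of full rank $n-\nu$. For an \emph{arbitrary} realization of the short exact sequence this need not hold: the lower-right block of the quotient matrix can have rank strictly less than $n-\nu$. When it does, $E'\cap\cO(\vec b)$ has rank $>m-\mu+1$, and the degree inequality $\deg(\text{image})\le\deg A'$ can fail --- an injection of bundles of unequal ranks does not bound degrees (e.g.\ $\cO(1)\hookrightarrow\cO(-1)\oplus\cO(1)$). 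Tracking the argument through with the actual rank yields only $S(\mu',\nu')\ge 0$ for some $\mu'\le\mu$, $\nu'\ge\nu+1$ with $\mu'+\nu'=\mu+\nu+1$, which does not imply $S(\mu,\nu+1)\ge 0$. The paper closes exactly this gap by first observing that $S(\mu,\nu+1)\ge0$ (with $\vec b$ the kernel splitting type) is a closed condition on the irreducible space of surjections $\cO(\vec e)\twoheadrightarrow\cO(\vec a)$, so it suffices to verify it for a \emph{general} surjection $f$. For general $f$ the lower-right block $f'$ has general entries, and since $a_{\nu'}\ge e_{\mu+\nu'}$ for all $\nu'>\nu$ (by maximality of $\nu$), $f'$ is genuinely surjective; then your rank and degree computations go through verbatim.

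A minor side point: your claim that $\{\nu':A(\mu,\nu')<0\}$ is a down-set is false in general (take $\vec a=(5,5)$, $\vec e=(0,3,10)$, $\mu=1$: then $A(1,1)=2\ge 0$ but $A(1,2)=-5<0$). Fortunately it is not needed --- the vanishing of the upper-right block follows directly from $a_j\le a_\nu<e_{\mu+\nu}\le e_\ell$ for $j\le\nu$ and $\ell\ge\mu+\nu$.
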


\begin{proof}
(1) 
The inequality $S(\mu, \nu+1) \geq 0$ is a closed condition on the space of surjections from $\cO(\vec{e})$ to $\cO(\vec{a})$, which is irreducible. So it suffices to prove the lemma for a general map $f: \cO(\vec{e}) \to \cO(\vec{a})$. 
When $A(\mu, \nu) = a_\nu - e_{\mu+\nu}< 0$, such a map $f$ must have many forced zeros, as illustrated below:
\[
    \begin{blockarray}{ccccccc}
        & e_1 & \dots & e_{\mu+\nu-1} & e_{\mu+\nu} & \dots & e_{m+n} \\
      \begin{block}{c(cccccc)}
        a_1 & * & \dots & * & 0 & \dots & 0\\
        \vdots &  & \dots &  &  &  & \\
        a_{\nu} & * & \dots & * & 0 & \dots & 0\\
        a_{\nu+1} & * & \dots & * & * & \dots & *\\
        \vdots &  & \dots &  &  &  & \\
        a_{n } & * & \dots & * & * & \dots & *\\
      \end{block}
    \end{blockarray}.
\]
Let $f':\cO(e_{\mu+\nu}, \dots, e_{m+n}) \to \cO(a_{\nu+1}, \dots, a_{n})$ be the map given by the lower right block. Since $f$ is general, we may assume that the entries of the lower right block are general. By assumption, for all $\nu' \geq \nu+1,$ we have $a_{\nu'}\geq e_{\mu+\nu'}$, so $f'$ is surjective, as no entries on its diagonal or superdiagonal are forced to be zero. Then the kernel of $f'$ has rank $m-\mu+1$ and degree 
\[\sum_{i=\mu+\nu}^{m+n} e_i - \sum_{i=\nu+1}^{n} a_i.\]
The kernel of $f$ must contain the kernel of $f'$ as a subsheaf, which means that the last $m-\mu+1$ entries of $\vec{b}$ must sum to at least the degree of the kernel of $f'$. So \[\sum_{i=\mu}^{m} b_i \geq \sum_{i=\mu+\nu}^{m+n} e_i - \sum_{i=\nu+1}^{n} a_i.\] 
Equivalently,
\[S(\mu,\nu+1) = \sum_{i=\nu+1}^{n} a_i + \sum_{i=\mu}^{m} b_i - \sum_{i=\mu+\nu}^{m+n} e_i \geq 0.\]

(2) Dualizing the short exact sequence and applying part (1) gives  the desired claim. 
\end{proof}

\begin{lemma}
    If a triple $\trip$ satisfies the conditions of Lemma \ref{lem:necc}, and is weakly eligible, then it is eligible. 
\end{lemma}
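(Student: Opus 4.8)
The plan is to show that weak eligibility together with conditions (1)--(2) of Lemma \ref{lem:necc} forces the combinatorial criteria of Proposition \ref{prop:onlyif} to hold, which is exactly the content of eligibility. Conditions (1) (matching ranks) and (2) (matching degrees) of eligibility are literally conditions (1) and (2) of weak eligibility, so there is nothing to do for those; the work is in deducing the existence of the surjection $\cO(\vec{e}) \twoheadrightarrow \cO(\vec{a})$ and of the injection $\cO(\vec{b}) \hookrightarrow \cO(\vec{e})$ with locally free cokernel.

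For the surjection, by Proposition \ref{prop:onlyif} it suffices to check that for every $1 \leq i \leq n$, either $a_i \geq e_{i+1}$ or $a_j = e_j$ for all $j \leq i$. So fix $i$ and assume $a_i < e_{i+1}$, i.e. $A(1,i) < 0$. I would then apply Lemma \ref{lem:necc}(1) with $\mu = 1$: the set $\{\nu' : A(1,\nu') < 0\}$ contains $i$, hence its maximum $\nu$ satisfies $\nu \geq i$, and the hypothesis gives $T(0,\nu) \leq 0$. But $T(0,\nu) = \sum_{k=1}^{\nu}(a_k - e_k)$ is a sum of nonnegative terms by weak eligibility condition (3), so it must vanish term by term, giving $a_k = e_k$ for $1 \leq k \leq \nu$, and in particular for $1 \leq k \leq i$, as needed.

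For the injective map with locally free cokernel I would argue symmetrically (or simply dualize the previous paragraph). By Proposition \ref{prop:onlyif} one needs, for every $1 \leq i \leq m$, either $b_i \leq e_{n+i-1}$ or $b_j = e_{n+j}$ for all $i \leq j \leq m$. Fix $i$ and assume $b_i > e_{n+i-1}$, i.e. $B(i,n) < 0$; apply Lemma \ref{lem:necc}(2) with $\nu = n$, so the minimum $\mu$ of $\{\mu' : B(\mu',n) < 0\}$ satisfies $\mu \leq i$ and the hypothesis gives $T(\mu-1,n) \leq 0$. Using the degree identity, $T(\mu-1,n) = \sum_{k=\mu}^{m}(e_{n+k} - b_k)$, a sum of nonnegative terms by weak eligibility condition (4), so again it vanishes term by term and $b_k = e_{n+k}$ for $\mu \leq k \leq m$, hence for $i \leq k \leq m$.

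There is no serious obstacle here; the only points requiring care are that the extremal index produced by Lemma \ref{lem:necc} (the $\max$ in part (1), the $\min$ in part (2)) lies on the correct side of $i$ so that the offending index is included in the telescoping quantity $T$, and that the sign conventions relating $S$ and $T$ via $S(\mu+1,\nu+1) + T(\mu,\nu) = 0$ are applied correctly when rewriting $T(0,\nu)$ and $T(\mu-1,n)$. Once these bookkeeping checks are in place, weak eligibility makes each $T$ a sum of nonnegative terms and the conclusion is immediate.
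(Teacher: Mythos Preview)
Your proof is correct and follows essentially the same approach as the paper: both specialize Lemma \ref{lem:necc}(1) to $\mu=1$ and Lemma \ref{lem:necc}(2) to $\nu=n$, then use weak eligibility to force the resulting nonpositive sum of nonnegative terms to vanish termwise, recovering the Hong--Larson criteria. The only cosmetic difference is that the paper works directly with the single extremal index $\nu$ (resp.\ $\mu$), while you fix an arbitrary $i$ first and then observe that the extremal index dominates it; the content is identical.
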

\begin{proof}
    Let us consider part (1) of Lemma \ref{lem:necc} when $\mu = 1$. If there exists $\nu'$ such that $A(1, \nu') = a_{\nu'} - e_{\nu' + 1} < 0$, then the lemma says that if we let $\nu = \max\{1 \leq \nu' \leq n: a_{\nu'} - e_{\nu' + 1} < 0\}$, then $S(1, \nu+1) \geq 0$, or equivalently $T(0, \nu) \leq 0$, which says that
\[
(a_1 + \cdots + a_{\nu}) - (e_1 + \cdots + e_{\nu}) \leq 0.
\]
Weak eligibility says we must have that $a_1 \geq e_1, \dots, a_\nu \geq e_{\nu}$. In particular, this recovers the Hong-Larson criterion (Proposition \ref{prop:onlyif}) that there exists $\nu$ such that $a_1 = e_1, \dots, a_{\nu} = e_{\nu}$, and $a_{i} \geq e_{i+1}$ for $i > \nu$. Similarly, if we consider part (2) of Lemma \ref{lem:necc} where we set $\nu = n$, one recovers the dual statement of Hong-Larson.
\end{proof}

\begin{lemma}\label{lem:h weakly increasing, B>=0}
Suppose $(\vec{b},\vec{e},\vec{a})$ is a triple of splitting types. For $1 \leq \mu\leq m$, let 
\[
h_\mu = \min\{\nu: 1 \leq \nu \leq n+1, S(\mu, \nu') < 0 \text{ for all } \nu' > \nu\}.
\] 
 For a fixed $\mu$, if $B(\mu,h_{\mu}) \geq 0$, then $h_{\mu+1}\geq h_{\mu}$.
\end{lemma}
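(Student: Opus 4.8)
The plan is to reduce the whole statement to one bookkeeping identity together with a careful reading of the definition of $h_\mu$. The first step is to record the telescoping relation
\[
S(\mu+1,\nu) = S(\mu,\nu) + B(\mu,\nu),
\]
valid for all $1 \le \nu \le n+1$ (reading $B(\mu,n+1) = e_{\mu+n}-b_\mu$ from the same formula). This is immediate from the definitions: passing from $S(\mu,\nu)$ to $S(\mu+1,\nu)$ deletes the summand $b_\mu$ from $\sum_{i=\mu}^m b_i$ and deletes the summand $e_{\mu+\nu-1}$ from $\sum_{i=\mu+\nu-1}^{m+n}e_i$, and the net change is exactly $e_{\mu+\nu-1}-b_\mu = B(\mu,\nu)$.

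Next I would unwind what $h_\mu$ encodes. If $h_\mu = 1$ there is nothing to prove, since $h_{\mu+1}\ge 1$ always. So assume $h_\mu \ge 2$. By minimality of $h_\mu$, the index $h_\mu - 1$ fails the defining condition, so there is some $\nu' > h_\mu - 1$ with $S(\mu,\nu')\ge 0$; but $S(\mu,\nu') < 0$ for every $\nu' > h_\mu$, forcing $\nu' = h_\mu$, i.e.\ $S(\mu,h_\mu)\ge 0$. Now the hypothesis $B(\mu,h_\mu)\ge 0$ and the identity give
\[
S(\mu+1,h_\mu) = S(\mu,h_\mu) + B(\mu,h_\mu) \ge 0.
\]
Thus $h_\mu$ is an index strictly larger than $h_\mu - 1$ at which $S(\mu+1,\cdot)$ is nonnegative, so $h_\mu - 1$ cannot satisfy the condition defining $h_{\mu+1}$; since $h_{\mu+1}$ is the \emph{least} index satisfying that condition, we conclude $h_{\mu+1}\ge h_\mu$.

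I do not expect any serious obstacle: all the content is in the identity $S(\mu+1,\nu) = S(\mu,\nu)+B(\mu,\nu)$ plus the observation that $h_\mu \ge 2$ forces $S(\mu,h_\mu)\ge 0$. The only point requiring a word of care is the boundary value $\nu = n+1$, which is outside the range in which $B$ was originally defined; I would either extend $B(\mu,n+1):=e_{\mu+n}-b_\mu$ by the same formula (the identity then persists verbatim, since the computation above never used $\nu\le n$), or simply note that when $h_\mu = n+1$ the claim reads $h_{\mu+1}=n+1$, which again follows from $S(\mu+1,n+1)\ge 0$.
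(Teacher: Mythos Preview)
Your proof is correct and follows essentially the same approach as the paper: the paper's argument is the single line $S(\mu+1,h_\mu)=B(\mu,h_\mu)+S(\mu,h_\mu)\ge 0$, hence $h_{\mu+1}\ge h_\mu$, which is exactly your identity plus your inference. Your version is simply more careful, spelling out the verification of the identity, the case $h_\mu=1$, and the boundary issue at $\nu=n+1$ that the paper leaves implicit.
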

\begin{proof}
    We have
    \[
    S(\mu+1,h_{\mu})=B(\mu,h_{\mu})+S(\mu,h_{\mu}) \geq 0,
    \]
    so $h_{\mu+1} \geq h_{\mu}$.
\end{proof}

\begin{theorem} \label{thm:main_realizability}
Suppose $(\vec{b},\vec{e},\vec{a})$ is a triple of splitting types. For $1 \leq \mu\leq m,$ let 
\[
h_\mu = \min\{\nu: 1 \leq \nu \leq n+1, S(\mu, \nu') < 0 \text{ for all } \nu' > \nu\}.
\] 
Then the following statements are equivalent: 
\begin{enumerate}
    \item $\trip$ is realizable;
    \item $\trip$ is weakly eligible, and the conditions of Lemma \ref{lem:necc} are satisfied;
    \item $\trip$ is weakly eligible, and $A(\mu,\nu), B(\mu,\nu) \geq 0$ whenever $\nu \geq h_{\mu}$.
\end{enumerate}
\end{theorem}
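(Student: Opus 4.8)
The plan is to prove the three statements equivalent by running the cycle $(1)\Rightarrow(2)\Rightarrow(3)\Rightarrow(1)$. The implication $(1)\Rightarrow(2)$ is immediate: a realizable triple is eligible, and applying Proposition~\ref{prop:onlyif} to the resulting surjection $\cO(\vec{e})\twoheadrightarrow\cO(\vec{a})$ and injection $\cO(\vec{b})\hookrightarrow\cO(\vec{e})$ yields $a_i\ge e_i$ and $b_i\le e_{n+i}$ for all $i$ (in each branch of the Hong--Larson dichotomy the stated alternative gives the relevant inequality), so $\trip$ is weakly eligible; the remaining conditions in $(2)$ are precisely Lemma~\ref{lem:necc}.

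For $(2)\Rightarrow(3)$ I would induct on $\mu$, strengthening the statement to also record that $h_1\le h_2\le\cdots\le h_m$. Assume the claim for all indices smaller than $\mu$. If $A(\mu,\nu)<0$ for some $\nu\ge h_\mu$, put $\nu^{\ast}=\max\{\nu':A(\mu,\nu')<0\}\ge h_\mu$; then Lemma~\ref{lem:necc}(1) gives $S(\mu,\nu^{\ast}+1)\ge 0$, while $\nu^{\ast}+1>h_\mu$ forces $S(\mu,\nu^{\ast}+1)<0$ by the definition of $h_\mu$, a contradiction. If $B(\mu,\nu)<0$ for some $\nu\ge h_\mu$, put $\mu_0=\min\{\mu':B(\mu',\nu)<0\}\le\mu$; then Lemma~\ref{lem:necc}(2) gives $S(\mu_0,\nu+1)\ge 0$, while the inductive hypothesis gives $h_{\mu_0}\le h_\mu\le\nu$, so $\nu+1>h_{\mu_0}$ and $S(\mu_0,\nu+1)<0$, again a contradiction. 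Thus $A(\mu,\nu),B(\mu,\nu)\ge 0$ for all $\nu\ge h_\mu$; in particular $B(\mu,h_\mu)\ge 0$, so Lemma~\ref{lem:h weakly increasing, B>=0} yields $h_{\mu+1}\ge h_\mu$ and the induction continues.

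The substantial direction is $(3)\Rightarrow(1)$, where one must build an actual short exact sequence \eqref{eq:ses}. The structural starting point is that, by Lemma~\ref{lem:h weakly increasing, B>=0}, condition $(3)$ already forces $h_1\le\cdots\le h_m$, so the set of pairs $(\mu,\nu)$ with $\nu\ge h_\mu$ is a staircase-shaped region, and on that region every entry we care about in the quotient matrix ($A(\mu,\nu)\ge 0$) and in the kernel matrix ($B(\mu,\nu)\ge 0$) has nonnegative degree and hence may be filled with a polynomial. The plan is then: use the steps of this staircase to partition the $m+n$ line-bundle summands of $\cO(\vec{e})$ into consecutive blocks on which $\vec{a}$ and $\vec{b}$ are obtained from the entries of $\vec{e}$ by the balanced modifications $\alpha(\cdot,\cdot)$ and $\beta(\cdot,\cdot)$ of the introduction; realize a short exact sequence with the prescribed kernel and cokernel within each block by the balanced construction of \cite{Stromme}; and glue the block maps into global maps $\cO(\vec{b})\to\cO(\vec{e})\to\cO(\vec{a})$, the inequalities of $(3)$ being exactly what guarantees that the inter-block entries needed for the gluing have nonnegative degree. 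Exactness of the assembled sequence is then formal: writing $g\colon\cO(\vec{e})\to\cO(\vec{a})$ for the assembled surjection, $\ker g$ is a vector bundle of rank $m$, and the assembled injection identifies $\cO(\vec{b})$ with a rank-$m$ subsheaf of $\ker g$; since $\deg\ker g=\deg\vec{e}-\deg\vec{a}=\deg\vec{b}$, the torsion quotient $\ker g/\cO(\vec{b})$ has length $0$ and hence vanishes, so $\ker g\cong\cO(\vec{b})$.

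I expect this last direction to be the \emph{main obstacle}. Concretely, one must choose the polynomial entries so that the kernel of $g$ degenerates from its generic (most balanced) splitting type all the way down to the prescribed $\vec{b}$, which can be far from balanced, while keeping $g$ surjective; equivalently, one must show that the incidence locus of pairs of maps with $\cO(\vec{b})\to\cO(\vec{e})$ injective, $\cO(\vec{e})\to\cO(\vec{a})$ surjective, and composite zero is nonempty. The block decomposition read off from the $h_\mu$ is the bookkeeping device that controls this, and checking that no gluing step destroys surjectivity is where condition $(3)$ enters in an essential way.
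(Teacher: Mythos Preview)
Your implications $(1)\Rightarrow(2)$ and $(2)\Rightarrow(3)$ are correct and match the paper; your inductive packaging of $(2)\Rightarrow(3)$ is equivalent to the paper's minimal-counterexample argument, both ultimately resting on Lemma~\ref{lem:h weakly increasing, B>=0}.

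For $(3)\Rightarrow(1)$, your high-level picture---a staircase of blocks governed by the $h_\mu$, with block maps glued into a global pair $(C,G)$---is the right one, and is exactly how the paper's construction in \S\ref{sec:construction_matrices} is organized. But your proposed mechanism for realizing each block is mistaken. The claim that within a block ``$\vec{a}$ and $\vec{b}$ are obtained from the entries of $\vec{e}$ by the balanced modifications $\alpha(\cdot,\cdot)$ and $\beta(\cdot,\cdot)$'' is false in general: that balanced structure is the definition of a \emph{combinatorially stable} pair (Definition~\ref{def:combo_stable}), not a property of an arbitrary triple satisfying (3). Already for $\vec{e}=(0,0,0)$, $\vec{b}=(-5)$, $\vec{a}=(1,4)$ there is a single block and $\vec{a}$ is not balanced, so no Str{\o}mme-type construction applies.

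What the paper actually does is quite different in flavor. The $k$-th block $G_k$ is a matrix with one more column than row, mapping $\cO(e_{h_k+k-1},\ldots,e_{h_{k+1}+k})\to\cO(a_{h_k},\ldots,a_{h_{k+1}})$, whose kernel is the \emph{single} line bundle $\cO(b_k)$; adjacent blocks overlap in one column. The entries of $G_k$ and of the kernel column $C_k$ are explicit monomials $x^py^q$ with exponents read off directly from $S(\mu,\nu)$, $T(\mu,\nu)$, $A(\mu,\nu)$, $B(\mu,\nu)$, and condition (3) is exactly what makes every required exponent nonnegative. The identity $G_kC_k=0$ is a termwise monomial check, and surjectivity of $G$ (resp.\ injectivity of $C$ as a bundle map) follows by exhibiting a pure $x$-power and a pure $y$-power among the maximal minors. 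So your bookkeeping intuition about the staircase is sound, but the construction you need is an explicit monomial one tailored to the given $\vec{a},\vec{b}$, not a reduction to the balanced case.
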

\begin{proof}
    Our argument above shows that (1) implies (2). We will now show that (2) implies (3). 

    Suppose (3) fails. Let $\mu_0$ be the smallest index such that for some $\nu_0\geq h_{\mu_0}$, $A(\mu_0,\nu_0) < 0$ or $B(\mu_0,\nu_0) < 0$. If $A(\mu_0,\nu_0)<0$, then letting \[\nu = \max \{1 \leq \nu' \leq n: A(\mu_0, \nu') < 0\},\]
    we have $\nu\geq \nu_0$. However, this means $\nu+1>h_{\mu_0}$ so by definition of $h_{\mu_0}$, we get $S(\mu_0,\nu+1)<0$, which fails condition (2). 
    
    On the other hand, if $B(\mu_0, \nu_0) < 0$, then let 
    \[\mu = \min \{1 \leq \mu' \leq m: B(\mu', \nu_0) < 0\}.\] Conditions of Lemma \ref{lem:necc} would then require that $S(\mu,\nu_0+1)\geq 0$, but this implies $h_\mu>\nu_0$ while $\nu_0\geq h_{\mu_0}$, implying $h_\mu>h_{\mu_0}$. In particular $\mu < \mu_0$. However, by Lemma \ref{lem:h weakly increasing, B>=0} applied to $\mu, \mu+1, \dots, \mu_0-1$, we find that $h_{\mu_0} \geq h_{\mu}$, which is a contradiction.

    Lastly, the proof of (3) implies (1) is via an explicit construction of a pair of matrices realizing $\trip$, and is done in the next subsection.
\end{proof}

\subsection{Explicit construction of a pair of matrices} \label{sec:construction_matrices}
Assuming Condition (3) in Theorem \ref{thm:main_realizability}, we shall now construct the kernel and quotient matrices that define the first and second maps in an exact sequence
\[
0 \to \cO(\vec{b}) \xrightarrow{C} \cO(\vec{e}) \xrightarrow{G} \cO(\vec{a}) \to 0.
\]
First we will construct the quotient matrix $G$ which we will argue is surjective. Then we shall construct a map $C: \cO(\vec{b}) \to \cO(\vec{e})$ which will be an injective map of vector bundles satisfying $GC = 0$. Lastly, since $\deg \vec{b} = \deg \vec{e} - \deg \vec{a}$, in fact $C = \ker G$ and the pair $(C, G)$ exhibits $\trip$ as realizable.

Let $n' = \max \{\nu : a_i=e_i\ \forall\  i\leq \nu\}$, and $m':= m + 1 - \min \{\mu: b_i=e_{n+i}\ \forall\ i\geq \mu\}$. Define $h_{\mu}$ for $1 \leq \mu \leq m$ as in Theorem \ref{thm:main_realizability}. Then
\begin{enumerate}
    \item $h_1 = n'+1$;
    \item $h_{m-m'} \leq n$ and $h_{\mu} = n+1$ for $\mu > m-m'$;
    \item $h_1 \leq h_2 \leq \cdots \leq h_{m}$.
\end{enumerate}
To see the first fact, note that
\[
h_1 = \max\{\nu: S(1, \nu) \geq 0\} = \max \{\nu: T(0, \nu-1) \leq 0\}.
\]
The latter inequality $T(0, \nu-1) \leq 0$ is equivalent to
\[
(a_1 + \cdots + a_{\nu-1}) - (e_1 + \cdots + e_{\nu-1}) \leq 0.
\]
By condition (3) of weak eligibility, in fact 
\[
h_1 = \max\{\nu: a_i = e_i \text{ for } i \leq \nu-1\} = n'+1.
\]
To see the second fact, note that $h_{\mu} = n+1$ is equivalent to $S(\mu, n+1) \geq 0$, i.e. 
\[
b_{\mu} + \cdots + b_m \geq e_{\mu+n} + \cdots + e_{m+n}.
\]
By condition (4) of weak eligibility, in fact
\begin{align*}
\min\{\mu: h_{\mu} = n+1\} &= \min\{\mu: h_{\mu'} = n+1 \text{ for } \mu' \geq \mu\} \\ &= \min \{\mu: b_i = e_{n+i} \text{ for } i \geq \mu\} \\ &= m-m'+1.
\end{align*}
The last fact about $h_{\mu}$'s forming a weakly increasing sequence follows from Lemma \ref{lem:h weakly increasing, B>=0}.

In the construction below, we assume all of these facts, and shall only make use of $h_1 \leq h_2 \leq \cdots \leq h_{m-m'}$. Our discussion above shows that the $h_{\mu}$'s that do not show up in the construction are in some sense superfluous.

\subsubsection{Construction of the quotient matrix.} First we shall define the quotient matrix $G$. After fixing a splitting of $\cO(\vec{a})$ and $\cO(\vec{e})$, we can specify $G$ by specifying the following compositions:
\begin{enumerate}
    \item $\cO(\vec{e}) \xrightarrow{G} \cO(\vec{a}) \twoheadrightarrow \cO(a_1, \dots, a_{n'})$;
    \item $\cO(\vec{e}) \xrightarrow{G} \cO(\vec{a}) \twoheadrightarrow \cO(a_{h_{k}}, \dots, a_{h_{k+1}})$ for $1 \leq k \leq m-m'$.
\end{enumerate}
The first composition is easy to describe. By definition, the first $n'$ entries of $\vec{e}$ and $\vec{a}$ are the same. So the first composition is simply
\[
\cO(\vec{e}) \twoheadrightarrow \cO(e_1, \dots, e_{n'}) \xrightarrow{\cong} \cO(a_1, \dots, a_{n'}). 
\]
Later we shall define certain matrices $G_k$ that have one more column than rows. Specifically, for $1 \leq k \leq m-m'$, we shall define maps \[G_k: \cO(e_{h_{k} + k - 1}, \dots, e_{h_{k + 1} + k}) \to \cO(a_{h_{k}}, \dots, a_{h_{k + 1}}).\]
The second type of composition is then defined to be 
\[
\cO(\vec{e}) \twoheadrightarrow \cO(e_{h_{k} + k - 1}, \dots, e_{h_{k + 1} + k}) \xrightarrow{G_k} \cO(a_{h_{k}}, \dots, a_{h_{k + 1}}).
\]

Explicitly in terms of matrices, let $G^-$ denote the $n'\times n'$ block in the upper left corner of $G$, and $G^+$ the $(n-n')\times (m+n-m'-n')$ block whose entries are those in rows $n' + 1$ through $n$ and columns $n' + 1$ through $m+n-m'$. Then the matrix $G^+$ is the result of pasting together $G_1, \dots, G_{m-m'}$, as illustrated in Figure \ref{fig:quotientM+}. The block $G^-$ is the $n'\times n'$ identity matrix, and all entries not in $G^-$ or $G^+$ are zero.
\[
G = 
\left[\begin{array}{ccc|ccccc|ccc}
     & & & & & & & & & &\\
     & G^- & & & & & & & & &\\
     & & & & & & & & & &\\
     \hline
     & & & & & & & & & &\\
     & & & & & G^+& & & & & \\
     & & & & & & & & & &
\end{array}\right]
\]

\begin{figure}
    \centering
    \includegraphics[width=0.8\linewidth]{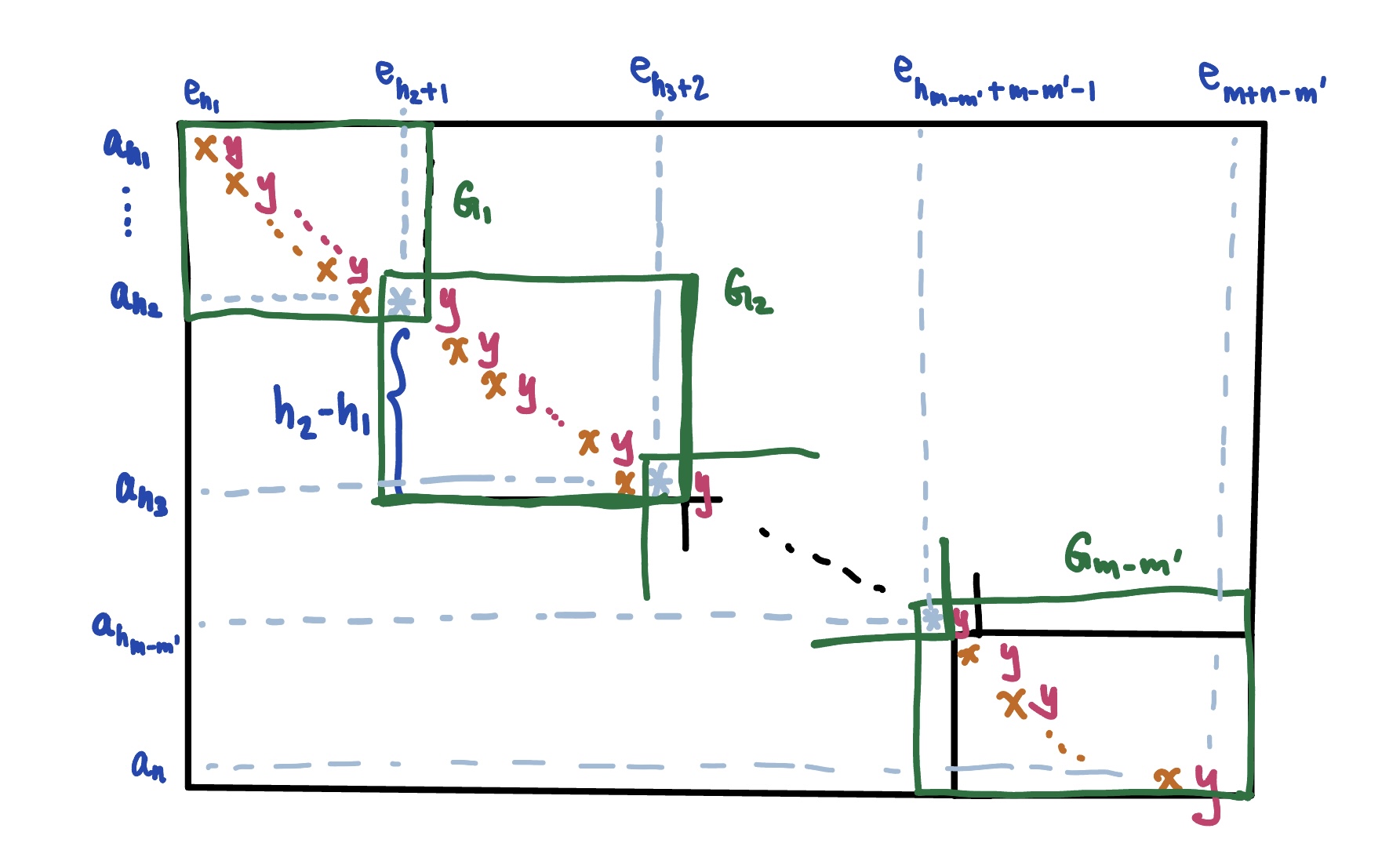}
    \caption{The matrix $G^+$ is obtained by gluing together $G_1, \dots, G_{m-m'}$ along the stars}
    \label{fig:quotientM+}
\end{figure}

Now let us define $G_k$ for $1 \leq k \leq m-m'$:
\begin{align*}
G_1 = 
\begin{bmatrix}
    x & y &         &  & & \\ 
      & x & y       &  & & \\ 
      &   & \ddots  & \ddots & & \\ 
      &   &   &  x & y & \\ 
      &   &        & & x & *_2
\end{bmatrix}, 
G_{m-m'} = 
\begin{bmatrix}
*_{m-m'} & y &         &  &   \\ 
      & x & y       &  &  \\ 
      &   & x  & y &  \\ 
      &   &  & \ddots & \ddots  \\ 
      &   &  & &  x & y 
\end{bmatrix}, 
\end{align*} and for $1 < k < m-m'$, 
\[
G_k = 
\begin{bmatrix}
*_{k} & y &         &  &  & \\ 
      & x & y       &  & & \\ 
      &   & x  & y & & \\ 
      &   &  & \ddots & \ddots & \\ 
      &   &  & &  x & y & \\ 
      &   &    &  &   & x & *_{k+1}
\end{bmatrix}.
\] 
In the matrix, we write $x, y$ to denote the pure power of $x$ and $y$ of the correct degree in each entry. For example the $(1,1)$-entry in $G_1$ is $x^{A(0, h_1)}$. We also use $*_k$ to denote the monomial \[*_k = x^{T(k-1, h_k)}y^{S(k, h_k)}.\] Note that the location of $*_k$ is the $(h_k, h_k+k-1)$-th entry of $G$. By definition of $h_k$, $S(k, h_k) \geq 0$, and $T(k-1, h_k) = -S(k, h_k+1) > 0$. So $*_k$ is indeed a monomial. Since
\[
T(k-1, h_k) + S(k, h_k) = -S(k, h_k+1) + S(k, h_k) = A(k-1, h_k),
\]
the degree of $*_k$ is correct. In order for the diagonal of $G_k$ to consist of nonzero entries, we need that $A(k-1, h_k + \ell) \geq 0$ for $0 \leq \ell \leq h_{k+1}-h_k$. This holds by our necessary conditions because $h_k + \ell \geq h_{k-1}$. Similarly, in order for the superdiagonal of $G_k$ to consist of nonzero entries, we need that $A(k, h_k + \ell) \geq 0$ for $0 \leq \ell \leq h_{k+1}-h_k$, which holds for the same reason.

Lastly, we claim that $G^+$ is surjective, which would imply that $G$ is surjective. We can consider the $(n-n') \times (n-n'+1)$-submatrix of $G^+$ consisting of those columns that do not contain a $*$-entry. Then this submatrix just has pure $x$-powers on the diagonal and pure $y$-powers on the superdiagonal, and in particular shows that the maximal minors of $G^+$ contain both a pure $x$-power and a pure $y$-power, and thus must define a surjective map of vector bundles on $\bP^1$.

\subsubsection{Construction of the kernel matrix.} 
After fixing a splitting of $\cO(\vec{e})$ and $\cO(\vec{b})$, we can specify the kernel matrix $C$ by specifying its restriction to each summand $\cO(b_k)$. If $k > m-m'$, then $b_k = e_{n+k}$ and the restriction of $C$ to $\cO(b_k)$ will just be the composition $\cO(b_k) \cong \cO(e_{n+k}) \to \cO(\vec{e})$. If $1 \leq k \leq m-m'$, the restriction of $C$ to $\cO(b_k)$ is defined as the composition 
\[
\cO(b_k) \xrightarrow{C_k} \cO(e_{h_k+k-1}, \dots, e_{h_{k+1}+k}) \to \cO(e_{n'+1}, \dots, e_{m+n-m'}),
\]
where $C_k$ is some map we shall define later satisfying the fact that $G_k C_k = 0$. 

Explicitly in matrix form, let $C^-$ denote the $m'\times m'$ block in the lower right corner, and $C^+$ the $(m+ n-m'-n')\times (m-m')$ block whose entries are those in rows $n'+1$ through $m+n-m'$ and columns $1$ through $m-m'$. Then the matrix $C^+$ is the result of pasting together $C_1, \dots, C_{m-m'}$, as illustrated in Figure \ref{fig:kernelC+}. The block $C^-$ is the $m'\times m'$ identity matrix, and all entries not in $C^-$ or $C^+$ are zero.
\[
C = 
\left[\begin{array}{ccc|ccc}
     & & & & & \\
     & & & & & \\
     \hline
     & & & & & \\
     & & & & & \\
     & C^+ & & & & \\
     & & & & & \\
     & & & & & \\
     \hline
     & & & & & \\
     & & & & C^- & \\
     & & & & &
\end{array}\right]
\]

\begin{figure}
    \centering
    \includegraphics[width=0.5\linewidth]{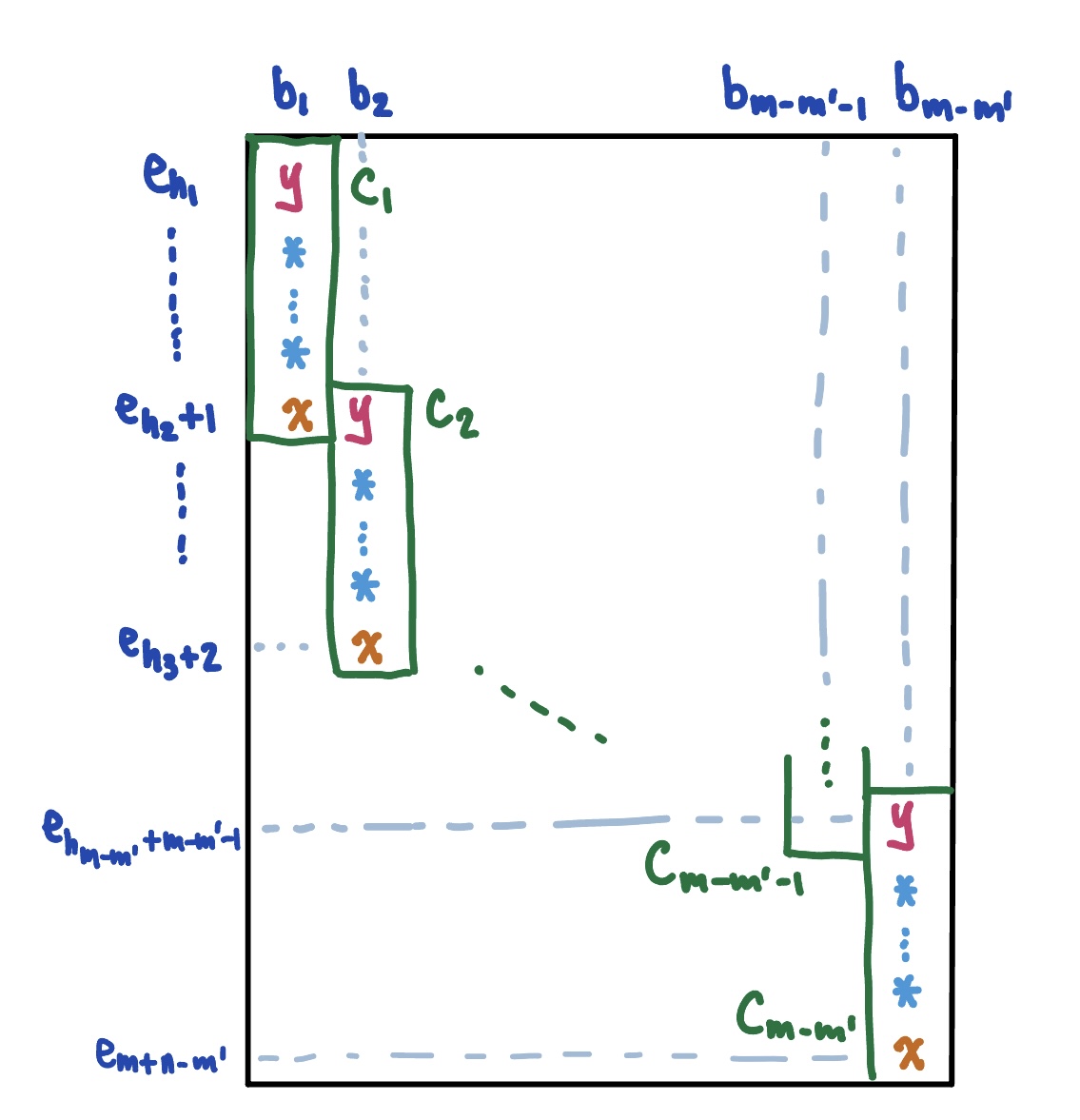}
    \caption{The matrix $C^+$ is obtained by gluing together $C_1, \dots, C_{m-m'}$ at the ends}
    \label{fig:kernelC+}
\end{figure}

We will now define the map $C_k: \cO(b_k) \to \cO(e_{h_k+k-1}, \dots, e_{h_{k+1}+k})$ satisfying $G_k C_k = 0$. 
Let 
\[
    C_k =\begin{bmatrix}
    y^{B(k, h_k)} \\
    -\star_{k, 1}\\
    \star_{k,2} \\ 
    \vdots \\
    \pm \star_{k, h_{k+1}-h_k}\\
    \mp x^{B(k, h_{k+1}+1)}
\end{bmatrix},
\]
where 
$\star_{k,\ell}$ denotes the monomial $x^{-S(k, h_k+\ell)}y^{-T(k, h_k+\ell-1)}$, and the signs are alternating going down the column. Note that all the entries of $C_k$ may be filled with a nonzero polynomial, since by our necessary conditions, $B(k, h_k + \ell) \geq 0$ whenever $\ell \geq 0$.

First, we check that 
\[
\deg\star_{k,\ell} = -S(k, h_k+\ell) - T(k, h_k+\ell-1) = B(k, h_k+\ell).
\]
This shows that all the entries are of the correct degree. Note that by definition of $h_k$, we have $-S(k, h_k + \ell) > 0$ for $1 \leq \ell \leq h_{k+1}-h_k$. By the necessary conditions, we know that $A(k, h_k + j) \geq 0$ for all $j \geq 0$, so 
\begin{align*}
-T(k, h_k + \ell - 1) &= S(k+1, h_k+\ell) \\ 
&= S(k+1, h_{k+1}) + A(k, h_k+\ell) + \cdots + A(k, h_{k+1}-1) \\ & \geq 0.
\end{align*} 
This shows that all the entries are indeed monomials. Lastly, we check that $G_k C_k = 0$ as claimed. It suffices to check the following three identities:
\begin{enumerate}
    \item $y^{B(k, h_k)} *_k - \star_{k,1} y^{A(k,h_k)} = 0$,
    \item $\star_{k,\ell} x^{A(k-1, h_k+\ell)} - \star_{k, \ell+1} y^{A(k, h_k+\ell)} = 0$, for $1 \leq \ell \leq h_{k+1}-h_k$,
    \item $\star_{k, h_{k+1}-h_k} x^{A(k-1, h_{k+1})} - x^{B(k, h_{k+1} + 1)} *_{k+1} = 0$.
\end{enumerate}
We explain how to check the second identity. The others are similar. 
On one hand, 
\[
    \star_{k,\ell} x^{A(k-1, h_k+\ell)} = x^{-S(k, h_k + \ell) + A(k-1,h_k + \ell)} y^{-T(k, h_k+\ell-1)},
\]
where $-S(k, h_k + \ell) + A(k-1,h_k + \ell) = -S(k, h_k+\ell+1)$.
On the other hand, 
\[
\star_{k, \ell+1} y^{A(k, h_k+\ell)} = 
x^{-S(k,h_k+\ell+1)}y^{-T(k,h_k+\ell)} y^{A(k, h_k+\ell)},
\]
where the $y$-degree can be rewritten as 
\[
-T(k,h_k+\ell) + A(k,h_k+\ell) = -T(k, h_k+\ell-1).
\]
Hence these two monomials are equal.

Additionally, similar to the case of the quotient matrix, we can check that $C^+$ is an injective map of vector bundles. Consider the submatrix of $C^+$ consisting of those rows that are the top and/or bottom of some $C_k$. Then this submatrix will also have two maximal minors which are respectively a pure $x$-power and a pure $y$-power. Hence $C$ is an injective map of vector bundles. Moreover, by construction and using the fact that $G_k C_k = 0$, we see that $GC = 0$. This finishes the construction.

\subsection{Realizability via balancing}
We will now prove another useful equivalent condition to realizability for a triple $\trip$. Let $[n] = \{1, 2, \dots, n\}$.

\begin{definition}[Balancing datum]
    A balancing datum for $\trip$ is a triple $(\sigma, \tau, \Gamma)$, where 
    $\sigma: [n] \to [m+n], \tau:[m] \to [m+n]$ are functions such that $\sigma \sqcup \tau: [n] \sqcup [m] \to [m+n]$ is a bijection. The last piece of data, $\Gamma$, is a function $\Gamma: [m] \times [n] \to \Z_{\geq 0}$, satisfying $\Gamma(i,j) \neq 0$ only if 
    \[ 
    b_i < e_{\tau(i)} \leq e_{\sigma(j)} < a_j,
    \]
    and if $\Gamma(i,j) \neq 0$, we have $\Gamma(i,j) \leq \min(e_{\tau(i)} - b_i, a_j - e_{\sigma(j)})$. Moreover, we require that
     \begin{align} \label{eq:balancing_datum_condition}
         \sum_{1 \leq j \leq n} \Gamma(i,j) = e_{\tau(i)} - b_i, \\ 
         \sum_{1 \leq i \leq m} \Gamma(i, j) = a_j - e_{\sigma(j)}.
     \end{align}
\end{definition}
Intuitively, $\Gamma$ gives instructions for how to balance $\vec{a}$ against $\vec{b}$ iteratively so that $a_j$ eventually becomes $e_{\sigma(j)}$, and $b_i$ becomes $e_{\tau(i)}$. 
\begin{example} 
    Let $\vec{e}=(2,7,8,11,20)$, $\vec{b}=(0,3,9)$, $\vec{a}= (13,23)$. Define $\sigma,\tau$ as follows:
    \begin{align*}
        \sigma(1)&=3\\
        \sigma(2)&=5\\
        \tau(1)&=1\\
        \tau(2)&=2\\
        \tau(3)&=4.
    \end{align*}
    Then a balancing datum built from $\sigma,\tau$ should give a means of balancing $\vec{b}$ against $\vec{a}$ until $\vec{b}=(2,7,11),\vec{a}=(8,20)$. Indeed, set 
    \begin{align*}
        \G(1,1)&=2\\
        \G(2,1)&=3\\
        \G(2,2)&=1\\
        \G(3,2)&=2.
    \end{align*}
    Then $(\gs, \tau,\G)$ is a balancing datum, and we can iteratively update the entries of $\vec{b}$ and $\vec{a}$ according to $\G$ as shown below:
    \begin{center}
\begin{tabular}{ c|c } 

$\vec{b}$ & $\vec{a}$ \\
\hline
$(0,3,9)$ & $(13,23)$ \\
$(2,3,9)$ & $(11,23)$ \\
$(2,6,9)$ & $(8,23)$ \\
$(2,7,9)$ & $(8,22)$ \\
$(2,7,11)$ & $(8,20)$ 
\end{tabular}
\end{center}
\end{example}

\begin{theorem} \label{thm:realzable_iff_balancing}
    A triple $\trip$ is realizable if and only if there exists a balancing datum for $\trip$.
\end{theorem}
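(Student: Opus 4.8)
The plan is to prove the two implications separately, using as black boxes both Theorem~\ref{thm:main_realizability} and the explicit matrix construction of Section~\ref{sec:construction_matrices}. Throughout I would work with unordered tuples of integers, since both "realizable" and "admits a balancing datum" are manifestly insensitive to permuting the parts of $\vec{b}$ or $\vec{a}$ (an SES depends only on the isomorphism classes of the three bundles), so one may freely reorder and need not fuss over re-sorting.

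\emph{Realizable $\Rightarrow$ balancing datum.} Here I would read the datum off the constructed matrices. Since realizability is equivalent to condition (3) of Theorem~\ref{thm:main_realizability}, the construction of Section~\ref{sec:construction_matrices} produces an exact sequence $0\to\cO(\vec{b})\xrightarrow{C}\cO(\vec{e})\xrightarrow{G}\cO(\vec{a})\to 0$. Both $C$ and $G$ are "generically triangular": $G$ consists of the identity block $G^-$ together with the bidiagonal blocks $G_k$, and away from the $*$-columns it is pivoted, so it singles out $n$ of the $m+n$ summands of $\cO(\vec{e})$ (those mapped isomorphically onto the summands of $\cO(\vec{a})$ after row operations); dually $C$ singles out $m$ summands. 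One checks from the shapes in Figures~\ref{fig:quotientM+} and~\ref{fig:kernelC+} that these two sets are complementary — the point requiring care being to discard the correct one of the two redundant end-columns (resp.\ end-rows) of the glued chain of blocks — so they define $\sigma\colon[n]\to[m+n]$, $\tau\colon[m]\to[m+n]$ with $\sigma\sqcup\tau$ a bijection. One then sets $\Gamma(i,j)$ to record the amount of degree transferred between the $j$-th summand of $\cO(\vec{a})$ and the $i$-th summand of $\cO(\vec{b})$ along the chain. The identities $\sum_j\Gamma(i,j)=e_{\tau(i)}-b_i$ and $\sum_i\Gamma(i,j)=a_j-e_{\sigma(j)}$, and the support and size constraints defining a balancing datum, then follow from the degree bookkeeping already carried out in the construction (nonnegativity of $T(k-1,h_k)$, $S(k,h_k)$, and of $A(\mu,\nu),B(\mu,\nu)$ for $\nu\ge h_\mu$, and the relation $S(\mu+1,\nu+1)+T(\mu,\nu)=0$).

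\emph{Balancing datum $\Rightarrow$ realizable.} I would induct on $|\Gamma|:=\sum_{i,j}\Gamma(i,j)$. When $|\Gamma|=0$ the datum forces $b_i=e_{\tau(i)}$ and $a_j=e_{\sigma(j)}$, so $\cO(\vec{b})\oplus\cO(\vec{a})\cong\cO(\vec{e})$ (since $\sigma\sqcup\tau$ is a bijection) and the split sequence realizes $\trip$. If $|\Gamma|>0$, choose $(i_0,j_0)$ with $\Gamma(i_0,j_0)>0$; then $b_{i_0}<e_{\tau(i_0)}\le e_{\sigma(j_0)}<a_{j_0}$. Decreasing $\Gamma(i_0,j_0)$ by one yields (a routine check) a balancing datum of total $|\Gamma|-1$ for the triple $(\vec{b}^+,\vec{e},\vec{a}^-)$ in which $b_{i_0}$ is raised by one and $a_{j_0}$ lowered by one; by induction this triple is realizable. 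The inductive step is then closed by the following \emph{modification lemma}: if $0\to\cO(\vec{b}')\to\cO(\vec{e})\to\cO(\vec{a}')\to 0$ is exact and there are parts $b'_{i_0}$ of $\vec{b}'$, $a'_{j_0}$ of $\vec{a}'$, and $e_\ell$ of $\vec{e}$ with $b'_{i_0}\le e_\ell\le a'_{j_0}$, then lowering $b'_{i_0}$ by one and raising $a'_{j_0}$ by one gives a realizable triple. In our situation $b'_{i_0}=b_{i_0}+1\le e_{\tau(i_0)}$ and $a'_{j_0}=a_{j_0}-1\ge e_{\sigma(j_0)}\ge e_{\tau(i_0)}$, so the lemma applies with $e_\ell=e_{\tau(i_0)}$ and recovers $\trip$. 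The lemma can be proved either geometrically, by an explicit modification of the matrix pair localized at a point $p\in\bP^1$ — using the summand $\cO(e_\ell)$ as a hinge through which one unit of degree is transferred, multiplying the relevant row of $G$ and column of $C$ by a linear form vanishing at $p$ and adding a compensating term (built from the map factoring through $\cO(e_\ell)$) to keep $GC=0$ and both maps (co)kernel-locally-free, exactness of the new sequence being automatic from $\deg\vec{b}'+\deg\vec{a}'=\deg\vec{e}$ — or combinatorially, by checking that the modified triple is again weakly eligible (immediate, since the sorted entries move only in the favorable direction) and satisfies the inequalities of Lemma~\ref{lem:necc}, and invoking Theorem~\ref{thm:main_realizability}.

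The main obstacle is precisely this modification lemma. At the numerical level the move "lower one part of $\vec{b}$, raise one part of $\vec{a}$" is obviously compatible with the rank and degree constraints; the content is that, under the bracketing hypothesis $b'_{i_0}\le e_\ell\le a'_{j_0}$, it can actually be carried out keeping the kernel map's cokernel and the quotient both locally free, and it is here that the bracketing $e_\ell$ (equivalently $e_{\tau(i_0)}\le e_{\sigma(j_0)}$ in the datum) is used essentially. In the combinatorial incarnation the same difficulty reappears as the need to control how $A(\mu,\nu)$, $B(\mu,\nu)$, $S(\mu,\nu)$ change under the move, which forces one to track the interaction between the bijection $\sigma\sqcup\tau$, the orderings of $\vec{e},\vec{a},\vec{b}$, and the choice of threshold index $\nu=\max\{\nu':A(\mu,\nu')<0\}$. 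A secondary, purely bookkeeping nuisance, in the forward direction, is verifying that the pivot sets of $G$ and $C$ are genuinely disjoint.
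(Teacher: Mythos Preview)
Your forward direction is close in spirit to the paper's: both extract $\sigma,\tau$ from the explicit construction of Section~\ref{sec:construction_matrices} (the paper writes them directly as $\tau(i)=h_i+i-1$, $\sigma(j)=j+\max\{\mu:h_\mu\le j\}$), and then build $\Gamma$ using the inequalities $S(\mu,\nu)\ge 0$ already established. Your description of $\Gamma$ as ``the amount of degree transferred along the chain'' is vague, but can be made precise along the paper's lines.

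The backward direction has a genuine gap: your modification lemma is \emph{false} as stated. Take $\vec{e}=(0,0,2,2,2)$, $\vec{b}'=(0,2)$, $\vec{a}'=(0,0,4)$. This triple is realizable, e.g.\ via
\[
G=\begin{bmatrix}1&0&0&0&0\\0&1&0&0&0\\0&0&x^2&y^2&0\end{bmatrix},\qquad
C=\begin{bmatrix}0&0\\0&0\\y^2&0\\-x^2&0\\0&1\end{bmatrix}.
\]
Now take $i_0=1$, $j_0=1$, $e_\ell=e_1=0$, so $b'_1=0\le 0\le 0=a'_1$. Your lemma would give that $\vec{b}''=(-1,2)$, $\vec{a}''=(0,1,4)$ is realizable. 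But there is no surjection $\cO(0,0,2,2,2)\twoheadrightarrow\cO(0,1,4)$: any such map is zero on the $\cO(2)^3$ summand when projected to $\cO(0)\oplus\cO(1)$, and a map $\cO(0)^2\to\cO(0)\oplus\cO(1)$ has cokernel of length $1$, hence is never surjective. (Equivalently, Proposition~\ref{prop:onlyif} fails at $i=2$ since $a''_2=1<e_3=2$ while $a''_2\ne e_2$.) So neither your geometric nor your combinatorial route can succeed for an \emph{arbitrary} choice of $(i_0,j_0)$.

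The paper's proof repairs exactly this: it first strips all split factors (reducing to $b_i<e_{\tau(i)}$ and $a_j>e_{\sigma(j)}$ for every $i,j$), and then does \emph{not} pick an arbitrary pair but specifically takes $j=1$ and an $i$ with $\tau(i)<\sigma(1)$. With that choice the bookkeeping for $S(\mu,\nu)$ goes through: undoing the step changes $a_1$ and $b_i$ only, and for $(\mu,\nu)$ with $h_\mu>1$ and $\nu\ge h_\mu$ the relevant $S(\mu,\nu)$ is unchanged, while for $h_\mu=1$ the quantities $A,B$ are already nonnegative and only increase. In your counterexample the stripping step removes the two $a'$-entries equal to $0$ and the $b'$-entry equal to $2$, leaving the harmless triple $((0),(2,2),(4))$; the bad modification you would need never arises. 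Your induction scheme is salvageable, but only with this more careful choice of which entry to move.
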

\begin{proof}
    First we show that realizability implies the existence of a balancing datum. Suppose $(\vec{b},\vec{a})$ is realizable. Let 
\[ \tau(i) = \begin{cases} 
      h_i +i -1& i\leq m-m', \\
      n+i & m'<i\leq m, 
   \end{cases}
\]
where $h_{\mu}$ is as determined in Theorem \ref{thm:main_realizability}, and 
\[ \sigma(j) = j+\max\{\mu:h_\mu\leq j\}.
\]
In the construction of the kernel and quotient matrices in Subsection \ref{sec:construction_matrices}, the entry $(\tau(i), i)$ in the kernel matrix is filled, so is the entry $(j, \sigma(j))$ in the quotient matrix. Therefore, $b_{i} \leq e_{\tau(i)}$ for each $i$, and $a_{j}\geq e_{\sigma(j)}$ for each $j$. Moreover, by examining the quotient matrix, we see that $e_i$'s are ordered as follows: 
\begin{align*}
& e_{\sigma(1)} \leq \cdots \leq e_{\sigma(h_1-1)} 
\leq e_{\tau(1)} \\
\leq \ & e_{\sigma(h_1)} \leq \cdots \leq e_{\sigma(h_2-1)}
\leq e_{\tau(2)} \\ 
\leq \ & e_{\sigma(h_2)} \leq \cdots \leq e_{\sigma(h_3-1)}
\leq e_{\tau(3)} \\ 
\leq \ & \cdots \\  
\leq \ & e_{\sigma(h_{m-m'-1})} \leq \cdots \leq e_{\sigma(h_{m-m'}-1)} \leq e_{\tau(m-m')} \\ 
\leq \ & e_{\sigma(h_{m-m'})} \leq e_{\sigma(h_{m-m'} + 1)} \leq \cdots \leq e_{\sigma(n)}
\\
\leq \ & e_{\tau(m-m'+1)} \leq e_{\tau(m-m'+2)} \leq \cdots \leq e_{\tau(m)}.
\end{align*}
Note that $n' = h_1-1$. As such, by definition, $e_{\sigma(j)} - a_j = 0$ for $j \leq h_1-1$, and $e_{\tau(i)} - b_i = 0$ for $i \geq m-m'+1$.
Thus, we only need to define $\Gamma(i,j)$ whenever $h_1 \leq j \leq h_i-1$. We proceed inductively. Suppose that for each $h_1 \leq j < h_{\mu+1}$, we have assigned values for $\G(i,j)$ for all $i$, such that 
\begin{align*}
\sum_{1 \leq i \leq \mu} \Gamma(i, j) &= a_j - e_{\sigma(j)} \text{ when } 1 \leq j < h_{\mu+1}, \\ 
\Gamma(i,j) &= 0 \text{ for } i > \mu, \\ 
\sum_{h_1 \leq j < h_{\mu+1}} \Gamma(i, j) & \leq e_{\tau(i)} - b_i.
\end{align*}
Next we will define $\Gamma(i,j)$ where $h_{\mu+1} \leq j < h_{\mu+2}-1$. By realizability, we have that $S(\mu+2,h_{\mu+2})\geq 0$, i.e. 
\[
(a_1-e_{\sigma(1)})+\dots+(a_{h_{\mu+2}-1}-e_{\sigma(h_{\mu+2}-1)})\leq (e_{\tau(1)}-b_1)+\dots+(e_{\tau(\mu+1)}-b_{\mu+1}).
\]
In particular, 
\[
\sum_{j = h_{\mu+1}}^{h_{\mu+2}-1} (a_j - e_{\sigma(j)})
\leq 
\sum_{i = 1}^{\mu+1}\left (e_{\tau(i)} - b_i - \sum_{j = 1}^{h_{\mu+1}-1} \Gamma(i,j) \right ).
\]
This guarantees that there exists a way to assign $\Gamma(i,j)$ for $h_{\mu+1} \leq j < h_{\mu+2}-1$ while satisfying the inductive hypothesis. 

Note that after having assigned all the values, we have 
\begin{align*}
\sum_{j = 1}^n (a_j - e_{\sigma(j)})
&= 
\sum_{j = 1}^n \sum_{i = 1}^m \Gamma(i,j) \\
&= \sum_{i = 1}^m \sum_{j = 1}^n \Gamma(i,j) \\ 
&\leq \sum_{i = 1}^m (e_{\tau(i)} - b_i) \\ 
&= \sum_{j = 1}^n (a_j - e_{\sigma(j)}).
\end{align*}
Therefore, in fact $\sum_{j = 1}^n \Gamma(i,j) = e_{\tau(i)} - b_i$ for all $i$.

For the other direction, we use induction on $\sum_{(i,j)}\G(i,j)$. For the base case, if $\sum_{(i,j)}\G(i,j) = 0$, then $\trip$ is realizable via a split extension. Now suppose $(\gs,\tau,\G)$ is a balancing datum for $\trip$. By removing isomorphisms, without loss of generality, we may assume $b_i<e_\tau(i)$ for each $i$ and $a_j > e_{\sigma(j)}$ for all $j$. Now, let $i_0$ be maximal such that $\tau(i_0)<\gs(1)$. Then $\G(i, 1)>0$ for some $i\leq i_0$. Replacing $b_i$ with $b_i+1$, $a_1$ with $a_1-1$ and decreasing $\G(i,1)$ by $1$, gives a new triple and balancing datum, which is realizable by our inductive hypothesis. Undoing this replacement does not affect weak eligibility, and leaves $S(\mu, \nu)$ unchanged for all $(\mu,\nu)$ such that $h_\mu>1$ and $\nu\geq h_\mu$. $A(\mu,\nu)$ and $B(\mu,\nu)$ are already non-negative when $h_\mu = 1$ and will only be increased. Therefore, by Theorem \ref{thm:main_realizability}, $\trip$ is realizable. 
\end{proof}

\section{Stable and Strongly Stable Pairs}

Let $\cS, \cQ$ be the universal kernel and quotient bundles on $\QuotLF \times \bP^1$. Let $\vec{b}, \vec{a}$ be splitting types of rank $m, n$ and degree $\deg \cO(\vec{e})-d, d$ respectively. We write $\Sigma_{\vec{a}}(\cQ) \subseteq \QuotLF$ to denote the locally closed subscheme where the quotient bundle splits as $\vec{a}$. Similarly we write $\Sigma_{\vec{b}}(\cS) \subseteq \QuotLF$ to denote the locally closed subscheme where the kernel bundle splits as $\vec{b}$. We refer the reader to \cite{larson_degeneracy} and \cite[Section 2]{Lin2025} for background on splitting types and splitting loci.

\begin{proposition} \label{prop:strataQuotLF}
    Let $V_{\vec{b}} \subseteq \Hom(\cO(\vec{b}), \cO(\vec{e}))$ be the open subscheme consisting of injective maps with locally free cokernel, which admits a faithful action by $\Aut(\cO(\vec{b}))$ via precomposition. Then $\Sigma_{\vec{b}}(\cS) \cong [V_{\vec{b}}/\Aut(\cO(\vec{b}))]$. In particular, $\Sigma_{\vec{b}}(\cS)$ is smooth and irreducible of dimension $\hom(\cO(\vec{b}), \cO(\vec{e})) - \END(\cO(\vec{b}))$. Similarly, let $U_{\vec{a}} \subseteq \Hom(\cO(\vec{e}), \cO(\vec{a}))$ be the open subscheme consisting of surjective maps. Then $\Sigma_{\vec{a}}(\cQ) \cong [U_{\vec{a}} /\Aut(\cO(\vec{a}))]$ is smooth and irreducible of dimension $\hom(\cO(\vec{e}), \cO(\vec{a})) - \END(\cO(\vec{a}))$.
\end{proposition}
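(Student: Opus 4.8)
The plan is to identify $\Sigma_{\vec{a}}(\cQ)$ with the quotient of $U_{\vec{a}}$ by $\Aut(\cO(\vec{a}))$ directly from the universal property of the Quot scheme, and then to read off smoothness, irreducibility and the dimension formula from $U_{\vec{a}}$. First I would construct a morphism $U_{\vec{a}} \to \Sigma_{\vec{a}}(\cQ)$: over $U_{\vec{a}} \times \bP^1$ there is the tautological map $u\colon \cO(\vec{e}) \to \cO(\vec{a})$ obtained by pulling back the universal point of $\Hom(\cO(\vec{e}),\cO(\vec{a}))$, and by definition of $U_{\vec{a}}$ it is surjective on every fiber of the projection to $U_{\vec{a}}$, hence a surjection of sheaves whose kernel $\mathcal{K}$ is flat over $U_{\vec{a}}$ (indeed locally free, as the kernel of a surjection of vector bundles). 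Thus $u$ is a flat family of locally free quotients of $\cO(\vec{e})$ with fiberwise splitting type $\vec{a}$, and the universal property produces a morphism $U_{\vec{a}} \to \QuotSES$ landing in $\Sigma_{\vec{a}}(\cQ)$. Postcomposing $u$ with an automorphism of $\cO(\vec{a})$ leaves $\mathcal{K}$ unchanged, so this morphism is invariant for the $\Aut(\cO(\vec{a}))$-action by postcomposition; and that action is free, since the stabilizer of a surjection $q$ consists of automorphisms $g$ with $(g-\mathrm{id})\circ q = 0$, forcing $g = \mathrm{id}$. Hence I obtain a morphism of stacks $\Phi\colon [U_{\vec{a}}/\Aut(\cO(\vec{a}))] \to \Sigma_{\vec{a}}(\cQ)$; proving $\Phi$ an isomorphism also shows the quotient stack is representable, the target being a scheme.

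To show $\Phi$ is an isomorphism I would check that $U_{\vec{a}} \to \Sigma_{\vec{a}}(\cQ)$ is an $\underline{\Aut}(\cO(\vec{a}))$-torsor by constructing the inverse functorially: given a $T$-flat surjection $\cO(\vec{e}) \twoheadrightarrow \mathcal{F}$ over $T\times\bP^1$ with $\mathcal{F}$ locally free of fiberwise splitting type $\vec{a}$, form the functor $\mathcal{I}som_{T\times\bP^1/T}(p^\ast\cO(\vec{a}),\mathcal{F})$ on $T$-schemes, where $p\colon T\times\bP^1\to T$; composing an isomorphism $p^\ast\cO(\vec{a})\xrightarrow{\sim}\mathcal{F}$ with the given surjection yields an $\Aut(\cO(\vec{a}))$-equivariant map to $U_{\vec{a}}$, and one checks the two constructions are mutually inverse. \textbf{The crux is that $\mathcal{I}som_{T\times\bP^1/T}(p^\ast\cO(\vec{a}),\mathcal{F})$ is represented by a scheme over $T$ that is a Zariski-locally trivial torsor under the smooth group scheme $\underline{\Aut}(\cO(\vec{a}))$}, equivalently that $\mathcal{F}$ is Zariski-locally on $T$ isomorphic to $p^\ast\cO(\vec{a})$. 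This is a cohomology-and-base-change statement: since $\mathcal{F}_t$ has splitting type $\vec{a}$ for every $t$, the sheaf $\mathcal{H}om(p^\ast\cO(\vec{a}),\mathcal{F})$ has fiberwise $h^0$ and $h^1$ constant in $t$, so $p_\ast\mathcal{H}om(p^\ast\cO(\vec{a}),\mathcal{F})$ is locally free and its formation commutes with base change; lifting an isomorphism $\cO(\vec{a})\xrightarrow{\sim}\mathcal{F}_{t_0}$ to a section of this bundle over a neighborhood of $t_0$ produces a map $p^\ast\cO(\vec{a})\to\mathcal{F}$ that is an isomorphism over $t_0$, hence over an open neighborhood of $t_0$ since the cokernel has closed support proper over $T$. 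This type of statement is exactly what underlies the splitting-locus theory of the cited references; alternatively, $\underline{\Aut}(\cO(\vec{a}))$ is a special group, so all its torsors are automatically Zariski-locally trivial.

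Given the isomorphism $\Phi$, the rest is formal. When $\Sigma_{\vec{a}}(\cQ)$ is nonempty --- equivalently, by Proposition~\ref{prop:onlyif}, when a surjection $\cO(\vec{e})\twoheadrightarrow\cO(\vec{a})$ exists --- the scheme $U_{\vec{a}}$ is a nonempty open subscheme of the affine space $\Hom(\cO(\vec{e}),\cO(\vec{a}))$, hence smooth and irreducible of dimension $\hom(\cO(\vec{e}),\cO(\vec{a}))$, while $\Aut(\cO(\vec{a}))$ is a nonempty open subscheme of the affine space $\Hom(\cO(\vec{a}),\cO(\vec{a}))$, hence a smooth connected group scheme of dimension $\END(\cO(\vec{a}))$. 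A quotient stack of a smooth irreducible scheme by a free action of a smooth connected group scheme is smooth and irreducible of dimension the difference of the two dimensions; since here the quotient is representable by $\Sigma_{\vec{a}}(\cQ)$, it follows that $\Sigma_{\vec{a}}(\cQ)$ is smooth and irreducible of dimension $\hom(\cO(\vec{e}),\cO(\vec{a}))-\END(\cO(\vec{a}))$.

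Finally, the kernel statement follows by the same argument with $V_{\vec{b}}\subseteq\Hom(\cO(\vec{b}),\cO(\vec{e}))$ in place of $U_{\vec{a}}$: over $V_{\vec{b}}\times\bP^1$ the tautological map $\cO(\vec{b})\to\cO(\vec{e})$ is fiberwise injective with locally free cokernel, so it defines a flat family of locally free quotients whose kernel bundle has fiberwise splitting type $\vec{b}$, giving $V_{\vec{b}}\to\Sigma_{\vec{b}}(\cS)$; the $\Aut(\cO(\vec{b}))$-action by \emph{pre}composition is free because $c\circ g = c$ with $c$ injective forces $g=\mathrm{id}$; and the inverse is built from $\mathcal{I}som_{T\times\bP^1/T}(p^\ast\cO(\vec{b}),\mathcal{K})$ exactly as above. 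Alternatively, dualizing the short exact sequence gives an isomorphism $\QuotLF\cong\Quot^{m,d'}_{\bP^1}(\cO(\vec{e})^\vee)^{\circ}$ carrying $\Sigma_{\vec{b}}(\cS)$ to a quotient-splitting locus for $\cO(\vec{e})^\vee$, reducing the kernel case to the quotient case. Either way $\Sigma_{\vec{b}}(\cS)\cong[V_{\vec{b}}/\Aut(\cO(\vec{b}))]$ is smooth and irreducible of dimension $\hom(\cO(\vec{b}),\cO(\vec{e}))-\END(\cO(\vec{b}))$.
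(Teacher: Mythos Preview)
Your proposal is correct and is precisely a detailed unpacking of the paper's own argument: the paper's proof consists entirely of the sentence ``One can check that $\Sigma_{\vec{b}}(\cS)$ and $[V_{\vec{b}}/\Aut(\cO(\vec{b}))]$ represent the same functor of points,'' and your construction of the map via the universal property, the torsor identification via $\mathcal{I}som$ and cohomology-and-base-change, and the dimension count are exactly what that sentence is asking the reader to supply.
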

\begin{proof}
    One can check that $\Sigma_{\vec{b}}(\cS)$ and $[V_{\vec{b}}/\Aut(\cO(\vec{b}))]$ represent the same functor of points. Similarly for $\Sigma_{\vec{a}}(\cQ)$ and $[U_{\vec{a}}/\Aut(\cO(\vec{a}))]$.
\end{proof}
\begin{remark}
    Since $\QuotLF$ is in general not smooth, the map induced by $\cS$ from $\QuotLF$ to the stack of vector bundles of rank $m$ and degree $\deg \vec{e} - d$ is in general not smooth, and similarly for the map induced by $\cQ$. Thus it is slightly surprising that $\Sigma_{\vec{b}}(\cS)$ and $\Sigma_{\vec{a}}(\cQ)$ are smooth.
\end{remark} 

\begin{definition}[Stable Pairs] \label{def:stable_pairs}Let $\trip$ be realizable. Then a pair $(\vec{b}, \vec{a})$ is a \emph{stable pair} with respect to $\vec{e}$ if $\cO(\vec{a})$ is the generic cokernel among all maps in $\Sigma_{\vec{b}}(\cS)$, and if $\cO(\vec{b})$ is the generic kernel among all maps in $\Sigma_{\vec{a}}(\cQ)$. 
\end{definition}

\begin{definition}[Strongly Stable Pairs] Let $\trip$ be realizable. Then a pair $(\vec{b}, \vec{a})$ is a \emph{strongly stable pair} with respect to $\vec{e}$ if $(\vec{b}, \vec{a})$ is stable and $\dim \Hom(\cO(\vec{b}), \cO(\vec{a})) = \dim \Sigma_{\vec{a}}(\cQ) = \dim \Sigma_{\vec{b}}(\cS)$.    
\end{definition}

\begin{proposition} \label{prop:components_are_strongly_stable}
    Components of $\QuotLF$ are exactly the closure of $\Sigma_{\vec{b}}(\cS) \cap \Sigma_{\vec{a}}(\cQ)$, where $(\vec{b}, \vec{a})$ is a strongly stable pair.
\end{proposition}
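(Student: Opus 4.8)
The plan is to read off the irreducible components of $\QuotLF$ from its stratification by the locally closed loci $T_{\vec b,\vec a}:=\Sigma_{\vec b}(\cS)\cap\Sigma_{\vec a}(\cQ)$, then reduce to stable pairs by a ``balancing-up'' argument, and finally decide which stable pairs yield components by comparing the dimension of the stratum with the tangent space dimension $\hom(\cO(\vec b),\cO(\vec a))$. To begin: since a locally free quotient $[\cO(\vec e)\twoheadrightarrow\cO(\vec a)]$ and its kernel $\cO(\vec b)$ have well-defined splitting types, and since there are only finitely many triples $\trip$ of the relevant rank and degree, $\QuotLF$ is the disjoint union of the finitely many nonempty strata $T_{\vec b,\vec a}$ over realizable triples. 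Each $T_{\vec b,\vec a}$ is a locally closed subscheme of the irreducible scheme $\Sigma_{\vec b}(\cS)$ (Proposition \ref{prop:strataQuotLF}), hence irreducible. Consequently every irreducible component of $\QuotLF$ equals $\overline{T_{\vec b,\vec a}}$ for some pair, and the components are exactly the maximal members of the finite family $\{\overline{T_{\vec b,\vec a}}\}$ under inclusion.

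Next I reduce to stable pairs. If $(\vec b,\vec a)$ is not stable, then (say) $\cO(\vec a)$ is not the generic cokernel over $\Sigma_{\vec b}(\cS)$; let $\vec a'$ be that generic cokernel, which is more balanced than every cokernel that occurs, by semicontinuity of splitting type. Then $T_{\vec b,\vec a'}$ is dense open in $\Sigma_{\vec b}(\cS)$ while $T_{\vec b,\vec a}$ is a proper locally closed subset, so $\overline{T_{\vec b,\vec a}}\subsetneq\overline{\Sigma_{\vec b}(\cS)}=\overline{T_{\vec b,\vec a'}}$; symmetrically, if $\vec b$ is not the generic kernel over $\Sigma_{\vec a}(\cQ)$ one replaces $\vec b$ by the generic kernel. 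Iterating this (at each step $\vec a$ or $\vec b$ strictly ascends in the balancedness partial order, which admits no infinite ascending chains), one arrives at a stable pair $(\vec b^{\ast},\vec a^{\ast})$ with $\overline{T_{\vec b,\vec a}}\subseteq\overline{T_{\vec b^{\ast},\vec a^{\ast}}}$. Hence every component of $\QuotLF$ is the closure of the stratum of a stable pair, and distinct stable pairs give distinct closures (their strata are disjoint, yet each is dense in its own $\Sigma_{\vec b}(\cS)$, so two such closures cannot coincide).

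Now fix a stable pair $(\vec b,\vec a)$. By stability $T_{\vec b,\vec a}$ is dense in both $\Sigma_{\vec b}(\cS)$ and $\Sigma_{\vec a}(\cQ)$, so $\overline{T_{\vec b,\vec a}}=\overline{\Sigma_{\vec b}(\cS)}=\overline{\Sigma_{\vec a}(\cQ)}$ and $\dim\Sigma_{\vec b}(\cS)=\dim T_{\vec b,\vec a}=\dim\Sigma_{\vec a}(\cQ)=:d$; in particular the equality $\dim\Sigma_{\vec a}(\cQ)=\dim\Sigma_{\vec b}(\cS)$ appearing in the definition of a strongly stable pair is automatic here, so the real content of ``strongly stable'' for a stable pair is $\hom(\cO(\vec b),\cO(\vec a))=d$. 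At every $q\in T_{\vec b,\vec a}$ the kernel is $\cO(\vec b)$ and the quotient is $\cO(\vec a)$, so the Zariski tangent space $T_q\QuotLF=\Hom(\cO(\vec b),\cO(\vec a))$ has the constant dimension $\hom(\cO(\vec b),\cO(\vec a))$, and therefore $d\le\dim_q\QuotLF\le\hom(\cO(\vec b),\cO(\vec a))$ for general $q$, with $\overline{T_{\vec b,\vec a}}$ a component precisely when $\dim_q\QuotLF=d$. If $(\vec b,\vec a)$ is strongly stable, then $d=\hom(\cO(\vec b),\cO(\vec a))$, the inequalities collapse, $\QuotLF$ is smooth at a general $q\in T_{\vec b,\vec a}$, so $\overline{T_{\vec b,\vec a}}$ is the unique component through $q$ and is thus a component.

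The remaining, and main, difficulty is the converse: a stable pair with $d<\hom(\cO(\vec b),\cO(\vec a))$ must fail to give a component. Equivalently, one needs that $\QuotLF$ is generically reduced, hence generically smooth, along each of its irreducible components; for then $\overline{T_{\vec b,\vec a}}$ being a component would force $\hom(\cO(\vec b),\cO(\vec a))=\dim T_q\QuotLF=\dim_q\QuotLF=d$ at a general point. This is exactly where the a priori possibility of an oversized tangent space — a ``fat'' component, as in Mumford-type examples for Hilbert schemes — must be ruled out. I would establish it by producing, at a general $q\in T_{\vec b,\vec a}$, a family of honest deformations of $q$ in $\QuotLF$ that accounts for all of $\Hom(\cO(\vec b),\cO(\vec a))$: concretely by perturbing the explicit kernel/quotient matrix pair $(C,G)$ constructed in Section \ref{sec:construction_matrices}, whose free polynomial entries sweep out a reduced family, or by an obstruction-theoretic computation on $\bP^1$ showing that the obstruction class in $\Ext^1(\cO(\vec b),\cO(\vec a))$ vanishes along the stratum — automatic when $\Ext^1(\cO(\vec b),\cO(\vec a))=0$, and otherwise controlled via the smoothness of $\Sigma_{\vec a}(\cQ)$ and $\Sigma_{\vec b}(\cS)$ from Proposition \ref{prop:strataQuotLF}. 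Granting generic smoothness of components, the steps above combine to show that the irreducible components of $\QuotLF$ are exactly the closures $\overline{\Sigma_{\vec b}(\cS)\cap\Sigma_{\vec a}(\cQ)}$ of strata of strongly stable pairs $(\vec b,\vec a)$.
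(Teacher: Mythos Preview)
Your overall architecture matches the paper's: both argue that a component $Z$ has a well-defined generic pair $(\vec b,\vec a)$ which must be stable (you via iterated ``balancing up,'' the paper directly), and both show that a strongly stable pair yields a component by comparing $\dim T_{\vec b,\vec a}$ with $\dim T_q\QuotLF=\hom(\cO(\vec b),\cO(\vec a))$ to conclude the stratum is open. You also correctly isolate the one substantive remaining step: showing that the generic pair of a component is \emph{strongly} stable, equivalently that $\QuotLF$ is generically smooth along each component. The paper dispatches this with the sentence ``$\Sigma_{\vec a}(\cQ)$ is smooth and dense in $Z$. Therefore, the dimension of $\Hom(\cO(\vec b),\cO(\vec a))$ must equal the dimension of $\Sigma_{\vec a}(\cQ)$,'' so you have pinpointed exactly the place where the argument is terse.

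That said, your proposal does not actually close this gap. Neither suggested fix is an argument. The matrices of Section~\ref{sec:construction_matrices} are specific monomial matrices, not a family with $\hom(\cO(\vec b),\cO(\vec a))$ free parameters, so ``perturbing'' them does not produce a reduced local chart of the right dimension through a general point of the stratum. And the obstruction-theoretic remark is incomplete: when $\Ext^1(\cO(\vec b),\cO(\vec a))\ne 0$ --- as happens already for the stable pair $\vec b=(4,4,7)$, $\vec a=(0,7,13)$ of Example~\ref{ex:worked_out_stable_pairs} --- you give no mechanism by which smoothness of $\Sigma_{\vec a}(\cQ)$ and $\Sigma_{\vec b}(\cS)$ forces obstructions to vanish. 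Indeed, inside $T_q\QuotLF=\Hom(\cO(\vec b),\cO(\vec a))$ one computes $T_q\Sigma_{\vec a}(\cQ)=\mathrm{im}\big(\Hom(\cO(\vec e),\cO(\vec a))\big)$ and $T_q\Sigma_{\vec b}(\cS)=\mathrm{im}\big(\Hom(\cO(\vec b),\cO(\vec e))\big)$, each of dimension $d$, and for a stable pair these coincide; their common value equals $T_q\QuotLF$ precisely when $d=\hom$, so smoothness of the strata alone cannot force the equality. You end with ``Granting generic smoothness of components,'' so the proof as written is explicitly incomplete at exactly the point that matters.
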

\begin{proof}
    Let $Z$ be a component of $\QuotLF$. Let the generic splitting types for the universal kernel and quotient bundles on $Z \times \bP^1$ be $\vec{b}$ and $\vec{a}$ respectively. Then $\Sigma_{\vec{a}}(\cQ) \cap Z$ is dense in $Z$, which shows that $Z \subseteq \overline{\Sigma_{\vec{a}}(\cQ)}$. Since $Z$ is a component, in fact $Z = \overline{\Sigma_{\vec{a}}(\cQ)}$. Similarly, $Z = \overline{\Sigma_{\vec{b}}(\cS)}$. As such, $\Sigma_{\vec{a}}(\cQ) \cap \Sigma_{\vec{b}}(\cS)$ is dense in $Z$, and in particular dense in both $\Sigma_{\vec{a}}(\cQ)$ and $\Sigma_{\vec{b}}(\cS)$, which shows that $(\vec{b}, \vec{a})$ is a stable pair. To see that $(\vec{b}, \vec{a})$ is furthermore a strongly stable pair, recall that the tangent space to $\QuotLF$ of a closed point in $\Sigma_{\vec{a}}(\cQ) \cap \Sigma_{\vec{b}}(\cS)$ is exactly $\Hom(\cO(\vec{b}), \cO(\vec{a}))$. On the other hand, by Proposition \ref{prop:strataQuotLF} and our argument above, $\Sigma_{\vec{a}}(\cQ)$ is smooth and dense in $Z$. Therefore, the dimension of $\Hom(\cO(\vec{b}), \cO(\vec{a}))$ must equal the dimension of $\Sigma_{\vec{a}}(\cQ)$. Analogously, the dimension of $\Hom(\cO(\vec{b}), \cO(\vec{a}))$ must equal the dimension of $\Sigma_{\vec{b}}(\cS)$.

    Conversely, suppose that $(\vec{b}, \vec{a})$ is a strongly stable pair. Then $\Sigma_{\vec{b}}(\cS) \cap \Sigma_{\vec{a}}(\cQ)$ is nonempty since $\trip$ is realizable. The locus $\Sigma_{\vec{b}}(\cS) \cap \Sigma_{\vec{a}}(\cQ)$ is dense in $\Sigma_{\vec{b}}(\cS)$ and $\Sigma_{\vec{a}}(\cQ)$ since $(\vec{b}, \vec{a})$ is stable. In particular, the tangent space to $\Sigma_{\vec{b}}(\cS) \cap \Sigma_{\vec{a}}(\cQ)$ of a closed point has dimension equal to the dimensions of  $\Sigma_{\vec{b}}(\cS)$ and $\Sigma_{\vec{a}}(\cQ)$. Now the additional tangent space condition for being a strongly stable pair shows that the immersion of $\Sigma_{\vec{b}}(\cS) \cap \Sigma_{\vec{a}}(\cQ)$ in $\QuotLF$ is an open immersion.
\end{proof}

In the remainder of the section, we give explicit combinatorial characterizations of stable pairs and strongly stable pairs. 

First we define the notion of a combinatorially stable pair. We will soon show that this is equivalent to being stable, as in Definition \ref{def:stable_pairs}. 

\begin{definition}[Combinatorially stable pair] \label{def:combo_stable}
    A combinatorially stable pair is associated to the following package of data:
    \begin{enumerate}
        \item Nonnegative integers $m', n'$, such that $0 \leq m' \leq m, 0 \leq n' \leq n$;
        \item A partition $\Lambda=(P_1,Q_1,\dots,P_r,Q_r)$ of $(n'+1,\dots,m+n-m')$ into $2r$ parts such that $e_{P_{i,\text{last}}}<e_{Q_{i+1,1}}$ for each $1\leq i\leq r-1$. Given such data, we define 
        \begin{align*}
            \Delta P_i & = \sum_{1\leq j\leq |P_i|}(e_{P_{i,j}}-e_{P_{i,1}-1}-1)\\ 
            \Delta Q_i &= \sum_{1\leq j\leq |Q_i|}(e_{Q_{i,\last}+1}-e_{Q_{i,j}}-1). 
         \end{align*}
        
        \item A tuple $\vec{\delta}=(\delta_1,\dots,\delta_r)$, where $0 \leq \delta_i \leq \min(\Delta Q_i, \Delta P_i)$.
    \end{enumerate}
Given such data, let $\alpha(Q_i,\delta_i)$ be the most balanced tuple that is at least $e_{Q_i}$ in each entry and has $\sum_j(\alpha(Q_i,\delta_i)_j-e_{Q_{i,j}})=\delta_i$. Similarly, let $\beta(P_i,\delta_i)$ be the most balanced tuple that is at most $e_{P_i}$ in each entry and has $\sum_j(e_{P_{i,j}} -\beta(P_i,\delta_i)_j)=\delta_i$. Then the \emph{combinatorially stable pair} associated to the package above, which we denote by $\text{Pair}(m', n', \Lambda,\vec{\delta})$, is obtained by concatenating these balanced tuples as follows:
\begin{align*}
\vec{a} &= (e_1, \dots, e_{n'}, \alpha(Q_1, \delta_1), \dots, \alpha(Q_r, \delta_r)) \\ 
\vec{b} &= (\beta(P_1, \delta_1), \dots, \beta(P_r, \delta_r), e_{m+n-m'+1}, \dots, e_{m+n})
\end{align*}
\end{definition}
Note that in the above definition, $\alpha(Q_i,\delta_i)$ exists: it is the tuple obtained by adding $\delta_i$ to the entries of $Q_i$ in the most balanced manner, i.e. starting by adding to $Q_{i,1}$ until it becomes $Q_{i, 2}$, and then adding to $Q_{i,1}$ and $Q_{i,2}$ while keeping them balanced, until they reach $Q_{i,3}$, etc. Similarly, $\beta(P_i,\delta_i)$ can be obtained by subtracting $\delta_i$ from $P_i$ in the most balanced manner, starting with $P_{i, \last}$. 

We can read off a balancing datum for the combinatorially stable pair $\Pair(m', n', \Lambda,\vec{\delta})$ as follows: 
\begin{itemize}
    \item If $j$ is the index in $\vec{a}$ of $\alpha(Q_{s}, \delta_s)_k$, then $\gs(j)=Q_{s,k}$;
    \item If $i$ is the index in $\vec{b}$ of $\beta(P_{s}, \delta_s)_k$, then $\tau(i)=P_{s,k}$;
    \item $\G(i,j)>0$ only when $\gs(j)=Q_{s,k}$ and $\tau(i)=P_{s,\ell}$ for some $s,k,\ell$, and $\G$ sums to a total of $\delta_s$ over these pairs for a fixed $s$.
\end{itemize}

Even when the condition $\delta_i \leq \min(\Delta Q_i, \Delta P_i)$ does not hold, $\alpha(Q_i,\delta_i)$, $\beta(P_i,\delta_i)$, and $\text{Pair}(m', n', \Lambda,\vec{\delta})$ are still well defined. We will sometimes use these notations in this more general way.

Let us consider a few examples of combinatorially stable pairs.
\begin{example} \label{exmp:r=1}
    Consider the case where $r=1$ and $m'=n'=0$. Then we get a pair of the form 
    \begin{align*}
        \vec{b}&=\beta(P_1,\delta_1),\\
        \vec{a}&=\alpha(Q_1,\delta_1).
    \end{align*}
    By Theorem \ref{thm:realzable_iff_balancing}, this pair is realizable. We can directly check that $\vec{a}$ is the generic cokernel among all injective maps of vector bundles from $\cO(\vec{b}) \to \cO(\vec{e})$. For now, let $\pvec{a}^0$ be the generic cokernel. Note that we must have $a^0_n \geq e_{m+n}$, since $e_{m+n}$ must map nontrivially to at least one summand. If $a_n = \alpha(Q_1, \delta_1)_n > e_{m+n}$, then $\vec{a}$ is in fact balanced and we are done. If $a_n = e_{m+n}$, then $e_{m+n} \leq a^0_n \leq a_n = e_{m+n}$, so $a^0_n = e_{m+n}$. So $\pvec{a}^0$ can also be obtained by taking the generic cokernel among maps $\cO(\vec{b}) \to \cO(e_1, \dots, e_{m+n-1})$, and concatenating $\cO(e_{m+n})$. Let $Q_1' = Q_1 \setminus \{m+n\}$. By induction, the generic cokernel among maps $\cO(\vec{b}) \to \cO(e_1, \dots, e_{m+n-1})$ is $\alpha(Q_1', \delta_1)$, so $\pvec{a}^0 = (\alpha(Q_1',\delta_1), e_{m+n}) = \alpha(Q_1, \delta_1) = \vec{a}$ as desired. Dualizing and applying the same argument, we see that $\vec{b}$ is the generic kernel among all surjective maps from $\cO(\vec{e})$ to $\cO(\vec{a})$. Then $(\vec{b},\vec{a})$ is stable. 

    We can check also that $(\vec{b},\vec{a})$ is in fact strongly stable. Let $1 \leq j \leq n$ be the smallest index such that $a_i = e_{m+i}$ for all $i > j$ and $a_j < a_{j+1}$. (If $a_n > e_{m+n}$, then $j = n$.) Let $\vec{a} = (\vec{a}_-, \vec{a}_+)$, where $\vec{a}_- = (a_1, \dots, a_j)$, $\vec{a}_+ = (a_{j+1}, \dots, a_n)$. By construction, $\vec{e} = (\vec{e}_-, \vec{a}_+)$, where $\vec{e}_- = (e_1, \dots, e_{m+j})$. Moreover, it must be the case that $e_{m+j+1} > e_{m+j}$. Now,
    \begin{align*}
        \hom(\cO(\vec{e}), \cO(\vec{a})) - \END(\cO(\vec{a})) &= 
        \hom(\cO(\vec{e}_-), \cO(\vec{a}_-)) + \hom(\cO(\vec{e}_-), \cO(\vec{a}_+)) + 
        \hom(\cO(\vec{a}_+), \cO(\vec{a}_+)) \\
        & -   
        \hom(\cO(\vec{a}_-), \cO(\vec{a}_-))
        - 
        \hom(\cO(\vec{a}_-), \cO(\vec{a}_+))
        -
        \hom(\cO(\vec{a}_+), \cO(\vec{a}_+)) \\ 
        &= \hom(\cO(\vec{e}_-), \cO(\vec{a}_-)) - \hom(\cO(\vec{a}_-), \cO(\vec{a}_-)) \\ 
        & + \hom(\cO(\vec{e}_-), \cO(\vec{a}_+))
        - 
        \hom(\cO(\vec{a}_-), \cO(\vec{a}_+)).
    \end{align*}
    By construction, there is an exact sequence
    \[
    0 \to \oo(\vec{b}) \to \oo(\vec{e}_-) \to \oo(\vec{a}_-) \to 0.
    \]
    Since $\vec{a}_-$ is balanced, applying $\Hom(-, \cO(\vec{a}_-))$ to the sequence above, we find that
    \[
    0 \to \End(\cO(\vec{a}_-)) \to \Hom(\cO(\vec{e}_-), \cO(\vec{a}_-)) \to \Hom(\cO(\vec{b}), \cO(\vec{a}_-)) \to 0
    \]
    is exact.
    On the other hand, since $\ext^1(\cO(\vec{a}_-), \cO(\vec{a}_+))=0$, applying $\Hom(-, \cO(\vec{a}_+))$ to the sequence above, we find that
    \[
    0 \to \Hom(\cO(\vec{a}_-), \cO(\vec{a}_+)) \to \Hom(\cO(\vec{e}_-), \cO(\vec{a}_+)) \to \Hom(\cO(\vec{b}), \cO(\vec{a}_+)) \to 0
    \]
    is exact. Thus
    \begin{align*}
        \hom(\cO(\vec{e}), \cO(\vec{a})) - \END(\cO(\vec{a})) &= 
        \hom(\cO(\vec{b}), \cO(\vec{a}_-))+ 
        \hom(\cO(\vec{b}), \cO(\vec{a}_+)) \\ 
        &= \hom(\cO(\vec{b}), \cO(\vec{a})).
    \end{align*}
\end{example}

\begin{example} \label{ex:worked_out_stable_pairs}
    Let $\vec{e}=(0,4,5,6,8,12)$, $\rk\vec{a}=3$. Using Theorem \ref{thm:stable_pairs}, we can compute by hand the stable pairs for $\deg\vec{a}=20,\deg\vec{b} = 15$. Specifically, we enumerate all possibilities for the pair $(n',m')$. For each fixed $(n',m')$, we enumerate all possible partitions $\Lambda$, which place restrictions on possible degree distributions $\vec{\delta}$. 
    
    To illustrate this, consider the case $(n', m') = (1,0)$. If $\Lambda=((2,3),(4),(5),(6))$, we get $\delta_1+\delta_2=20-0-6-12=2$. We can compute $\Delta Q_1=1,\Delta P_2=1$, which forces $\delta_1 \leq 1$ and $\delta_2 \leq 1$, so we must have $\delta_1 = \delta_2 = 1$. This produces the stable pair
    \[\vec{b}=(4,4,7),\vec{a}=(0,7,13).\]
    If $\Lambda=((2),(3),(4,5),(6))$, then $\delta_1+\delta_2=3$, but $\Delta Q_1=0, \Delta P_2=2$, so no stable pairs arise this way.
    If $\Lambda=((2,3,4),\ (5,6))$, then $\vec{\delta}=(0,0)$ is forced and we get the pair
    \[\vec{b}=(4,5,6), \vec{a}=(0,8,12).\]
    All other partitions are not possible for degree reasons.
    
    After similar reasoning for all possible $(n', m')$ pairs, we get the following list of stable pairs. We also include the information of the dimension $D = \dim \Sigma_{\vec{b}}(\cS) = \dim \Sigma_{\vec{a}}(\cQ) = \hom(\cO(\vec{e}), \cO(\vec{a})) - \END(\cO(\vec{a}))$, as well as the dimension of the tangent space to any closed point of $\Sigma_{\vec{b}}(\cS) \cap \Sigma_{\vec{a}}(\cQ)$, which is $T = \hom(\cO(\vec{b}), \cO(\vec{a}))$.
    \[\vec{b}=(4,4,7),\vec{a}=(0,7,13), D = 35, T = 36,\]
    \[\vec{b}=(4,5,6), \vec{a}=(0,8,12), D = 36, T = 36,\]
    \[\vec{b}=(5,5,5),\vec{a}=(0,4,16), D = 36, T = 36,\]
    \[\vec{b}=(0,3,12),\vec{a}=(6,6,8), D = 37, T = 37,\]
    \[\vec{b}=(1,2,12),\vec{a}=(0,10,10), D = 38, T = 38,\]
    \[\vec{b}=(-5,8,12),\vec{a}=(6,7,7), D = 38, T = 38.\]
    When $D = T$, the pair is strongly stable. In this example, all pairs except the first one are strongly stable, which agrees with the criterion of Theorem \ref{thm:strongly_stable}.
\end{example}

We now proceed to prove that combinatorially stable pairs are exactly stable pairs.
\begin{definition}
We say that a balancing datum $(\gs,\tau,\G)$ \emph{minimal} if there is no other balancing datum 
with $\gs',\tau'$ 
such that $\tau'(i)\leq \tau(i)$ for all $1\leq i\leq m$ and at least one inequality is strict.
\end{definition}

\begin{lemma}\label{lem:first_block_stable}
    Suppose $(\vec{b},\vec{a})$ is a stable pair. Let $(\gs,\tau,\G)$ be a minimal balancing datum for $(\vec{b},\vec{a})$. Let $n'$ be maximal such that $\sigma(n')=n'\Llr a_{n'}=e_{n'}$. Let $m_1$ be maximal such that $\tau(m_1)=n'+m_1$, and $n_1$ maximal such that $\sigma(n'+n_1)=n'+m_1+n_1$. Then:
    \begin{enumerate}
        \item $a_{n'+n_1}<e_{\sigma(n'+n_1)+1}$; 
        \item $b_{m_1+1}>e_{\sigma(n'+n_1)}$; 
        \item $\G(i,j)=0$ when $i\leq m_1$ and $j>n'+n_1$. Equivalently, \[\sum_{i=1}^{m_1}(e_{\tau(i)}-b_i)=\sum_{j=1}^{n_1}(a_{n'+j}-e_{\gs(n'+j)}).\]
    \end{enumerate}
\end{lemma}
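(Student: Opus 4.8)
The plan is to exploit the equivalence between realizability and balancing data (Theorem \ref{thm:realzable_iff_balancing}), together with the structure of a minimal balancing datum. First I would recall that $n'$ is the length of the initial ``trivial'' block where $a_i = e_i$, and that on the complement $\sigma \sqcup \tau$ reshuffles the indices $\{n'+1, \dots, m+n-m'\}$ while $\Gamma$ records the balancing. The quantities $m_1, n_1$ isolate the first genuine ``interaction block'': after the trivial prefix, the first $m_1$ entries of $\vec b$ get matched to $e_{n'+1}, \dots, e_{n'+m_1}$ and the first $n_1$ nontrivial entries of $\vec a$ get matched to $e_{n'+m_1+1}, \dots, e_{n'+m_1+n_1}$, and I expect the key point is that $e_{\sigma(n'+n_1)} = e_{n'+m_1+n_1}$ sits at a strict jump in the $e$-sequence, so that nothing across this jump can balance against anything below it.

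For part (3): I would argue that $\Gamma(i,j) \neq 0$ forces $b_i < e_{\tau(i)} \le e_{\sigma(j)} < a_j$, so if $i \le m_1$ we have $e_{\tau(i)} = e_{n'+i} \le e_{n'+m_1}$, and if $j > n'+n_1$ then (by maximality of $n_1$ and the definition of $\sigma$) $e_{\sigma(j)}$ lies strictly above the level $e_{n'+m_1+n_1}$; combined with the partition-into-blocks structure forced by minimality, the chain of inequalities $e_{\tau(i)} \le e_{\sigma(j)}$ cannot be satisfied, giving $\Gamma(i,j) = 0$. The equivalent summation identity then follows by summing the two defining relations \eqref{eq:balancing_datum_condition} of the balancing datum over $i \le m_1$ and $j \le n_1$ respectively and observing the off-block contributions vanish. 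For parts (1) and (2): these are statements that $\vec a$ (resp. $\vec b$) cannot be ``more stretched'' across the relevant jump; I would prove them by contradiction, using stability of $(\vec b, \vec a)$. If $a_{n'+n_1} \ge e_{\sigma(n'+n_1)+1}$, then one could build a more balanced generic cokernel/kernel — concretely, exhibit an injection $\cO(\vec b) \hookrightarrow \cO(\vec e)$ whose generic cokernel is strictly more balanced than $\vec a$ in the relevant entries (using Proposition \ref{prop:onlyif} to see the extra room), contradicting that $\vec a$ is the generic cokernel over $\Sigma_{\vec b}(\cS)$. Part (2) is the dual statement and follows by dualizing the short exact sequence and repeating the argument, or by symmetry of the setup.

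I expect the main obstacle to be part (3), specifically the bookkeeping needed to show that minimality of the balancing datum really does force the indices to organize into the clean ``$P$-block then $Q$-block'' pattern near the start — i.e. that $\tau$ restricted to $\{1,\dots,m_1\}$ hits exactly $\{n'+1,\dots,n'+m_1\}$ and $\sigma$ restricted to the first $n_1$ nontrivial slots hits exactly $\{n'+m_1+1,\dots,n'+m_1+n_1\}$, with no ``leapfrogging.'' The idea is that if some $\tau(i_0)$ with $i_0 \le m_1$ were larger than necessary, one could swap it down (adjusting $\Gamma$ accordingly), contradicting minimality; making this swap argument precise — checking that the swapped data is still a valid balancing datum and that $\tau$ strictly decreases somewhere — is the delicate part, but it is essentially the same manipulation used in the proof of Theorem \ref{thm:realzable_iff_balancing}. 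Once the block structure is in hand, parts (1), (2), (3) all drop out from the strict inequality $e_{n'+m_1+n_1} < e_{n'+m_1+n_1+1}$ guaranteed by the definitions together with stability.
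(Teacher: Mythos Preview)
Your plan for part (3) has the inequality running the wrong way. The balancing-datum constraint for $\Gamma(i,j)\neq 0$ is $b_i<e_{\tau(i)}\le e_{\sigma(j)}<a_j$. If $i\le m_1$ then $e_{\tau(i)}$ is small (at most $e_{n'+m_1}$), and if $j>n'+n_1$ then $e_{\sigma(j)}$ is large; but small $\le$ large is \emph{satisfied}, not violated, so nothing in the definition of a balancing datum or in minimality alone forbids such a $\Gamma(i,j)$ from being nonzero. The paper's proof of (3) instead uses \emph{stability} together with the already-proved part (2): if some $\Gamma(i,j)>0$ with $i\le m_1$ and $j>n'+n_1$, then $b_i<e_{\tau(i)}\le e_{\sigma(n'+n_1)}<b_{m_1+1}$ (the last inequality is part (2)), so one can move one unit of mass from the $(m_1+1)$-st entry of $\vec b$ to the $i$-th entry, producing a strictly more balanced $\pvec{b}'>\vec b$ with a valid balancing datum for $(\pvec{b}',\vec a)$, contradicting that $\vec b$ is the generic kernel. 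In particular (3) depends on (2), so the order of the argument is (1), (2), (3), not (3) first.

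Two smaller points. Part (2) is not obtained from part (1) by dualizing the short exact sequence: the roles of $m_1$ and $n_1$ in the statement are not symmetric (both are defined relative to $\sigma,\tau$ on the kernel side of the block), and the paper proves (2) by a direct construction of a more balanced $\pvec{a}'>\vec a$. For part (1), stability alone is not enough: the paper splits into two cases according to whether $b_{m_1+1}\le e_{\sigma(n'+n_1)}$ or not, and in the first case the contradiction is with \emph{minimality} of the balancing datum (swap $\tau(m_1+1)$ with $\sigma(n'+n_1)$), while only the second case contradicts stability. Finally, the strict jump $e_{n'+m_1+n_1}<e_{n'+m_1+n_1+1}$ you invoke at the end is not ``guaranteed by the definitions''; it is essentially a consequence of (1) and (2), not an input to them.
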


\begin{proof}
    (1)
    Suppose towards a contradiction that $a_{n'+n_1}\geq e_{\sigma(n'+n_1)+1}=e_{\tau(m_1+1)}$. We have $b_{m_1+1}\leq e_{\tau(m_1+1)}$ so consider two cases: either $b_{m_1+1}\leq e_{\gs(n'+n_1)}$ or $b_{m_1+1}>e_{\gs(n'+n_1)}$. 
    
    In the first case, we obtain another balancing datum by swapping $\tau(m_1+1)$ and $\gs(n'+n_1)$. 
    The value of $\Gamma$ needs to be adjusted accordingly. 
    The value of $\G$ needs to be decreased by a total of $e_{n'+m_1+n_1+1}-e_{n'+m_1+n_1}$ on pairs $(m_1+1,-)$ and similarly on pairs $(-, n'+n_1)$. To compensate and satisfy \eqref{eq:balancing_datum_condition}, $\G$ can be increased on pairs $(i, j)$, where $i, j$ are such that $\Gamma(i, n'+n_1)$ and $\Gamma(m_1+1, j)$ were decreased. This new balancing datum contradicts the minimality of the original balancing datum. 
    
    In the second case, $0<b_{m_1+1}-e_{\gs(n'+n_1)}\leq a_{n'+n_1}-e_{\gs(n'+n_1)}$. Choose $d_i<\G(i,n'+n_1)$ for each $i\leq m_1$ such that $\sum_{i\leq m_1}d_i=b_{m_1+1}-e_{\gs(n'+n_1)}$ and adding $d_i$ to $b_i$ for each $i\leq m_1$ does not change the order of these entries. Let $\pvec{b}'$ be the pair obtained from $\vec{b}$ by replacing $b_{m_1+1}$ with $e_{\gs(n'+n_1)}$ and $b_{i}$ with $b_i+d_i$ for each $i\leq m_1$. Then $\pvec{b}'>\vec{b}$. For a balancing datum for $(\pvec{b}',\vec{a})$:
    \begin{itemize}
        \item Start with the balancing datum given for $(\vec{b},\vec{a})$.
        \item Swap $\tau(m_1+1)$ and $\gs(n'+n_1)$.
        \item Decrease $\G(i,n'+n_1)$ by $d_i$ for each $i\leq m_1$.
        \item Decrease $\G$ by an additional total of $e_{n'+m_1+n_1+1}-b_{m_1+1}$ on pairs whose second index is $n'+n_1$; thus we will need to increase $\G$ by the same total on other pairs whose first indices were included in affected pairs.
        \item Decrease $\G$ by a total of $e_{n'+m_1+n_1+1}-b_{m_1+1}$ on entries with first coordinate $m_1+1$; we will need to increase $\G$ by exactly on this total on pairs whose second indices were included in affected pairs. We can do this by pairing these indices by those referred to in the above step.
    \end{itemize}
    Then $(\pvec{b}',\vec{a})$ is realizable, which contradicts the fact that $(\vec{b}, \vec{a})$ is stable.
    
    (2)
    Suppose towards a contradiction that $b_{m_1+1}\leq e_{\sigma(n'+n_1)}$. Let $j_0$ be minimal such that $\G(m_1+1,j_0)>0$. Since \[b_{m_1+1}\leq e_{\sigma(n'+n_1)}\leq a_{n'+n_1}\Lr e_{\tau(m_1+1)}-a_{n'+n_1}\leq e_{\tau(m_1+1)}-b_{m_1 + 1},\]
    we can choose $d_j$ for each $j\geq j_0$ such that $d_j\leq\G(m_1+1,j)$ and $\sum_{j\geq j_0}d_j=e_{\tau(m_1+1)}-a_{n'+n_1}$. Now let $\pvec{a}'$ be obtained from $\vec{a}$ by replacing $a_{n'+n_1}$ with $e_{\tau(m_1+1)}$ and $a_j$ with $a_j-d_j$ for each $j\geq j_0$. Let $N$ be the index of the entry obtained from $a_{n'+n_1}$, upon shifting its position by the minimal amount such that entries of $\pvec{a}'$ are in increasing order. $N$ will still be less than $j_0$ so $\pvec{a}'>\vec{a}$. For convenience we can henceforth assume $N=n'+n_1$ since this will not change the validity of a balancing datum. One can obtain a balancing datum for $(\vec{b},\pvec{a}')$ by setting $\tau(m_1+1)= n'+m_1+n_1$ and $\gs(n'+n_1)$ to what was previously $\tau(m_1+1)$, and decreasing $\G(m_1+1,j)$ by $d_j$ for $j\geq j_0$.
    
    (3) 
    By the conditions on a balancing datum, it is clear that
    \[\sum_{i=1}^{m_1}(e_{\tau(i)}-b_i)\geq\sum_{j=1}^{n_1}(a_{n'+j}-e_{\gs(n'+j)}).\]
    Now suppose this inequality is strict. Then there is some $i\leq m_1$ such that $\G(i,j)>0$ for some $j> n'+n_1$, and in particular $b_i<e_{n'+i}\leq e_{\gs(n'+n_1)}<b_{m_1+1}$ by part (2). Note that we will have $\tau(m_1+1)=\gs(n'+n_1)+1$ by construction of $n_1$. Define $\pvec{b}'>\vec{b}$ from $\vec{b}$ by increasing $b_i$ by 1 and decreasing $b_{m_1+1}$ by 1. For a balancing datum for $\pvec{b}',\vec{a}$, start with the balancing datum given for $(\vec{b},\vec{a})$, decrease $\G(i,j)$ by 1 and increase $\G(m_1+1,j)$ by 1. Then $(\pvec{b}',\vec{a})$ is realizable, which is a contradiction.
\end{proof}

\begin{theorem} \label{thm:stable_pairs}
    Fix $\vec{e}$. A pair $(\vec{b},\vec{a})$ is stable if and only if it is combinatorially stable. 
\end{theorem}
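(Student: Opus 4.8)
The plan is to first reduce stability to a statement purely about realizability, and then compare with the combinatorial description using the balancing data of Theorem~\ref{thm:realzable_iff_balancing}. The space $V_{\vec b}\subseteq\Hom(\cO(\vec b),\cO(\vec e))$ of injective maps with locally free cokernel (Proposition~\ref{prop:strataQuotLF}) is an open subscheme of an affine space, hence irreducible, so it has a well-defined generic cokernel; since splitting types only become less balanced under specialization, this generic cokernel is the unique $\leq$-maximal $\vec a$ among those for which $\trip$ is realizable. Dually, $\vec b$ is the generic kernel of surjections $\cO(\vec e)\to\cO(\vec a)$ exactly when $(\vec b,\vec e,\vec a)$ is realizable and $\vec b$ is $\leq$-maximal among realizable triples with $\vec a$ fixed. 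Thus $(\vec b,\vec a)$ is stable if and only if $(\vec b,\vec e,\vec a)$ is realizable and neither $\vec a$ nor $\vec b$ can be made strictly more balanced while fixing the other entry and preserving realizability; by Theorem~\ref{thm:realzable_iff_balancing} this is a purely combinatorial condition.

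For the implication that a combinatorially stable pair $\Pair(m',n',\Lambda,\vec{\delta})$ is stable, realizability follows at once from the balancing datum $(\sigma,\tau,\Gamma)$ read off immediately after Definition~\ref{def:combo_stable}, together with Theorem~\ref{thm:realzable_iff_balancing}. For the two maximality statements, the crucial structural input is the separation hypothesis $e_{P_{i,\last}}<e_{Q_{i+1,1}}$: combined with the Hong--Larson shape constraints (Proposition~\ref{prop:onlyif}) and the realizability criterion of Theorem~\ref{thm:main_realizability} applied blockwise, it forces any realizing pair of matrices to respect the decomposition of $\vec e$ into the consecutive blocks $e_{P_1},e_{Q_1},\dots,e_{P_r},e_{Q_r}$ bracketed by the forced identity blocks coming from $n'$ and $m'$. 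The inequalities $\delta_i\le\min(\Delta Q_i,\Delta P_i)$ then say exactly that the $i$-th block is saturated --- $\alpha(Q_i,\delta_i)$ has all entries strictly below $e_{Q_{i,\last}+1}$ and $\beta(P_i,\delta_i)$ has all entries strictly above $e_{P_{i,1}-1}$ --- so that no balancing mass can be transferred across a block boundary. Within a single block one is exactly in the situation of Example~\ref{exmp:r=1}, which identifies $\alpha(Q_i,\delta_i)$ as the generic cokernel and $\beta(P_i,\delta_i)$ as the generic kernel; concatenating over the blocks yields that $\vec a$ and $\vec b$ are simultaneously $\leq$-maximal, i.e.\ that $(\vec b,\vec a)$ is stable.

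Conversely, suppose $(\vec b,\vec a)$ is stable; I produce the combinatorial data by induction on $m+n$, peeling off one block from the front. Fix a minimal balancing datum $(\sigma,\tau,\Gamma)$, which exists by Theorem~\ref{thm:realzable_iff_balancing}, and let $n'$ be maximal with $a_{n'}=e_{n'}$. Apply Lemma~\ref{lem:first_block_stable} to obtain the indices $m_1,n_1$, and set $P_1=\{n'+1,\dots,n'+m_1\}$, $Q_1=\{n'+m_1+1,\dots,n'+m_1+n_1\}$, and $\delta_1=\sum_{j\le n_1}(a_{n'+j}-e_{\sigma(n'+j)})$, which by part~(3) of that lemma also equals $\sum_{i\le m_1}(e_{\tau(i)}-b_i)$. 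Parts~(1) and~(2) of Lemma~\ref{lem:first_block_stable} (and their back-to-front analogues) give $a_{n'+n_1}<e_{Q_{1,\last}+1}$ and $b_{m_1+1}>e_{P_{1,1}-1}$, which unwind to $\delta_1\le\min(\Delta Q_1,\Delta P_1)$ and, once the next block is located, to the separation $e_{P_{1,\last}}<e_{Q_{2,1}}$; and the double maximality built into stability forces $\vec b|_{P_1}=\beta(P_1,\delta_1)$ and $\vec a|_{Q_1}=\alpha(Q_1,\delta_1)$, since any imbalance in these blocks could be partially corrected by the explicit moves used in the proof of Lemma~\ref{lem:first_block_stable}, contradicting maximality. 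Deleting the indices of $P_1$ and $Q_1$ from $\vec e$ and the corresponding entries of $\vec b$ and $\vec a$ produces a pair that is again stable with respect to the restricted splitting type --- here one checks that surjections and injections for the restricted data are precisely the restrictions of those for the original, again using the separation of the $e_{P_{1,\last}}$ block --- so the inductive hypothesis applies and recovers $(P_2,Q_2,\dots,P_r,Q_r)$ together with $(\delta_2,\dots,\delta_r)$ and $m'$; reassembling gives $(\vec b,\vec a)=\Pair(m',n',\Lambda,\vec{\delta})$. The base case $r=0$ is the split extension.

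The main obstacle, shared by both directions, is controlling the interaction between a realizing map (equivalently, a balancing datum) and the block decomposition of $\vec e$: one must show that the separation hypotheses on $\Lambda$ and the bounds $\delta_i\le\min(\Delta Q_i,\Delta P_i)$ are precisely what prevents balancing mass from flowing across block boundaries --- neither a weaker nor a stronger condition will do. In the forward direction this is the content of the reduction step, namely that the ``remainder'' pair after peeling is stable for the restricted $\vec e$, plus the bookkeeping that the peeled-off data verifies every clause of Definition~\ref{def:combo_stable}; in the backward direction it is the assertion that the block structure is \emph{forced} by realizability, which I expect to reduce, via Theorem~\ref{thm:main_realizability}, to analyzing the $h_\mu$-sequence and the signs of $A(\mu,\nu)$ and $B(\mu,\nu)$ one block at a time.
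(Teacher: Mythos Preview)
Your ``stable $\Rightarrow$ combinatorially stable'' direction follows the paper's argument closely: both peel off the first block using Lemma~\ref{lem:first_block_stable} and induct on the remainder. You are more explicit than the paper about why $(b_1,\dots,b_{m_1})=\beta(P_1,\delta_1)$ and $(a_{n'+1},\dots,a_{n'+n_1})=\alpha(Q_1,\delta_1)$, but the strategy is the same.

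The gap is in the other direction. Your assertion that the separation hypothesis ``forces any realizing pair of matrices to respect the decomposition of $\vec e$ into consecutive blocks'' is false if taken literally: a generic map $\cO(\vec b)\to\cO(\vec e)$ has nonzero entries in every slot of nonnegative degree, not only in block-diagonal positions. What you actually need is that the \emph{splitting type} of the generic cokernel decomposes blockwise, and this is precisely the content of the theorem, so the argument is circular as written. Your final paragraph concedes this, saying you ``expect'' the block structure to be forced via an analysis of the $h_\mu$-sequence, but no such analysis is supplied.

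The paper closes this gap by a different mechanism. It inducts on $r$ by peeling off the \emph{last} block and sets up a $3\times 3$ diagram of short exact sequences: the middle row is a generic $\cO(\vec b)\hookrightarrow\cO(\vec e)\twoheadrightarrow\cO(\vec a_{\mathrm{bal}})$; the bottom row is its restriction to the last block $\cO(\beta(P_r,\delta_r))\hookrightarrow\cO(\vec e_{P_r,Q_r})\twoheadrightarrow\cO(\alpha(Q_r,\delta_r))$; the top row is the induced sequence on the quotient, which by induction has cokernel $\cO(\pvec a')$. The snake lemma gives an exact sequence $0\to\cO(\alpha(Q_r,\delta_r))\to\cO(\vec a_{\mathrm{bal}})\to\cO(\pvec a')\to 0$, and the key point is that this sequence \emph{splits} because $\pvec a'_{\last}<\alpha(Q_r,\delta_r)_1$. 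You correctly identify the inequality $\alpha(Q_i,\delta_i)_{\last}<e_{P_{i+1,1}}$ as a consequence of $\delta_i\le\Delta Q_i$, but you do not use it to force a short exact sequence of vector bundles on $\bP^1$ to split --- that is the missing idea.
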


\begin{proof}
    First suppose $(\vec{b},\vec{a})$ is stable. Define $n', m_1, n_1$ as in the statement of Lemma \ref{lem:first_block_stable}. Then Lemma \ref{lem:first_block_stable} shows that a minimal balancing datum for $(\vec{b},\vec{a})$ will have $\G(i,j)\ne 0$ only if $1\leq i\leq m_1$ and $1\leq j\leq n'+n_1$ or $i$ and $j$ are both outside these ranges. Let $(\pvec{b}',\pvec{e}',\pvec{a}')$ be defined by removing the first $m_1$ entries from $\vec{b}$, removing the first $n'+m_1+n_1$ entries from $\vec{e}$ and removing the first $n'+n_1$ from $\vec{a}$. Then if $(\pvec{b}',\pvec{a}')$ is not stable with respect to $\pvec{e}'$, one can combine the balancing datum of a generization with $\G(i,j)$ for $1\leq i\leq m_1$ and $1\leq j\leq n'+n_1$ to get a balancing datum for a generization of $(\vec{b},\vec{a})$, so $(\pvec{b}',\pvec{e}',\pvec{a}')$ must be stable and thus combinatorially stable by inductive hypothesis. 

    Let $(\vec{b},\vec{a}) = \Pair(m', n', \Lambda,\vec{\delta})$ be a combinatorially stable pair. To see that $(\vec{b},\vec{a})$ is stable, we assume without loss of generality that $m' = n' = 0$, and proceed by induction on $r$. The base case $r=1$ is explained in Example \ref{exmp:r=1}. By symmetry, it suffices to show that $\vec{a}$ is the generic cokernel among all maps from $\cO(\vec{b})$ to $\cO(\vec{e})$. Let $(\pvec{b}', \pvec{e}', \pvec{a}')$ be the triple obtained by removing the $(P_r, Q_r)$ block. By induction, this triple is stable. Let $\vec{a}_{\bal}$ be the most balanced cokernel for a general map from $\cO(\vec{b})$ to $\cO(\vec{e})$. 
    
    Then we have the following diagram of short exact sequences. Let us unpack this diagram. We start with a generic map from $\cO(\vec{b}) \to \cO(\vec{e})$, whose cokernel is $\cO(\vec{a}_{\bal})$. This constitutes the middle row. Then by construction, $\cO(\vec{b})$ admits a distinguished inclusion from $\cO(\beta(P_r, \delta_r))$, which has cokernel $\cO(\pvec{b}')$. Similarly, $\cO(\vec{e})$ admits a distinguished inclusion from $\cO(\vec{e}_{P_r,Q_r})$, which has cokernel $\cO(\pvec{e}')$. The injective map $\cO(\vec{b}) \to \cO(\vec{e})$ restricts to an injective map $\cO(\beta(P_r,\delta_r)) \to \cO(\vec{e}_{P_r,Q_r})$, which is generic among such maps, which allows us to conclude that the cokernel is exactly $\cO(\alpha(Q_r,\delta_r))$, by the base case of the induction. Similarly, the injective map $\cO(\vec{b}) \to \cO(\vec{e})$ also restricts to a generic map $\cO(\pvec{b}') \to \cO(\pvec{e}')$, which implies that it is injective with cokernel $\cO(\pvec{a}')$. By the snake lemma, this shows that the induced map $\cO(\alpha(Q_r,\delta_r)) \to \cO(\vec{a}_{\bal})$ is injective.
    \[
\begin{tikzcd}
0 \arrow[r] & \cO(\pvec{b}')  \arrow[r]                               & \cO(\pvec{e}') \arrow[r]                      & \cO(\pvec{a}') \arrow[r]                          & 0 \\
0 \arrow[r] & \cO(\vec{b}) \arrow[u] \arrow[r]               & \cO(\vec{e}) \arrow[u] \arrow[r]              & \cO(\vec{a}_{\bal}) \arrow[u] \arrow[r]                  & 0 \\
0 \arrow[r] & {\cO(\beta(P_r,\delta_r))} \arrow[u] \arrow[r] & {\cO(\vec{e}_{P_r, Q_r})} \arrow[u] \arrow[r] & {\cO(\alpha(Q_{r},\delta_r))} \arrow[u] \arrow[r] & 0
\end{tikzcd}
    \]
    By construction, the last entry of $\pvec{a}'$ must be strictly less than the first entry of $\alpha(Q_r,\delta_r)$, which shows that the last vertical short exact sequence is in fact split, proving that $\vec{a}_{\bal} = \vec{a}$. 
\end{proof}

Among stable pairs, the condition for being strongly stable can also be combinatorially characterized.

\begin{lemma}\label{lem:ineq1}
For any tuples $\vec{a}$, $\pvec{a}'$, $\pvec{a}''$ with $\pvec{a}'\geq\pvec{a}''$, we have:
\begin{enumerate}
    \item $\hom(\oo(\vec{a}),\oo(\pvec{a}'))\leq \hom(\oo(\vec{a}),\oo(\pvec{a}''));$
    \item $\hom(\oo(\pvec{a}'),\oo(\pvec{a}))\leq \hom(\oo(\pvec{a}''),\oo(\pvec{a})).$
\end{enumerate}
\end{lemma}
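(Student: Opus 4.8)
The plan is to reduce to a single elementary balancing move and then read off the change in $\hom$ by a one-line count. Recall that on $\bP^1$ one has $\hom(\cO(f),\cO(g)) = h^0(\cO(g-f)) = \max(g-f+1,0)$, so for splitting types
\[
\hom(\cO(\vec{a}),\cO(\vec{c})) = \sum_{j}\phi(c_j), \qquad \phi(x):=\sum_{i}\max(x-a_i+1,0),
\]
and dually $\hom(\cO(\vec{c}),\cO(\vec{a})) = \sum_{j}\psi(c_j)$ with $\psi(x):=\sum_{i}\max(a_i-x+1,0)$; both quantities depend only on the multiset underlying $\vec{c}$. Since $\phi$ and $\psi$ are finite sums of ``ramp'' functions, they are discretely convex, and the hypothesis $\pvec{a}'\geq\pvec{a}''$ is precisely the statement that $\pvec{a}''$ majorizes $\pvec{a}'$; hence both inequalities are an instance of the Hardy--Littlewood--P\'olya inequality $\sum_j\phi(a_j')\leq\sum_j\phi(a_j'')$. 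I would record this as the conceptual reason but carry the argument out by hand, as follows.

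Because $\hom$ only sees the underlying multisets, it suffices to prove each inequality when $\pvec{a}'$ is obtained from $\pvec{a}''$ by a single covering step in the balancing order: one entry of value $s$ is raised to $s+1$ and one entry of value $t\geq s+2$ is lowered to $t-1$ (a step with $t\leq s+1$ either leaves the multiset unchanged or moves away from balanced, so does not occur in a chain from $\pvec{a}''$ up to $\pvec{a}'$). For such a step, using $\max(x+1,0)-\max(x,0)=1$ if $x\geq 0$ and $=0$ otherwise (for $x\in\Z$), one computes
\[
\hom(\cO(\vec{a}),\cO(\pvec{a}')) - \hom(\cO(\vec{a}),\cO(\pvec{a}'')) = \#\{i:a_i\leq s+1\} - \#\{i:a_i\leq t\} \leq 0,
\]
the inequality holding because $s+1\leq t$; this gives (1). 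Mirroring the computation with $\cO(\vec{a})$ in the first slot gives
\[
\hom(\cO(\pvec{a}'),\cO(\vec{a})) - \hom(\cO(\pvec{a}''),\cO(\vec{a})) = \#\{i:a_i\geq t-1\} - \#\{i:a_i\geq s\} \leq 0,
\]
since $t-1\geq s+1>s$; this gives (2). Summing over the covering steps in any chain from $\pvec{a}''$ to $\pvec{a}'$ yields the lemma.

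The only point requiring care is the reduction in the first sentence of the second paragraph --- namely that the balancing (dominance) order on tuples of a fixed rank and degree is generated by these single-unit covering moves --- which is standard; passing to multisets sidesteps any bookkeeping about keeping the intermediate tuples weakly increasing. Everything after that is the two displayed counts, and no semicontinuity or base-change input is needed, since the whole argument is phrased in terms of $h^0$ of line bundles on $\bP^1$.
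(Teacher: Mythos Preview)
Your proof is correct and takes a genuinely different route from the paper's. The paper argues geometrically: since $\pvec{a}'\geq\pvec{a}''$, there is a degeneration of $\cO(\pvec{a}')$ to $\cO(\pvec{a}'')$ over $\bA^1$, and both inequalities then drop out of upper semicontinuity of $h^0$ applied to the families $\cO(\vec{a})^\vee\otimes E$ and $E^\vee\otimes\cO(\vec{a})$. Your argument instead unwinds $\hom$ into a sum of ramp functions and reduces to a single covering move in the dominance order, reading off the change as a difference of counts; this is entirely elementary, needs no external input, and in fact yields the exact defect under each elementary balancing step (information the semicontinuity argument does not see). The majorization/Hardy--Littlewood--P\'olya framing you mention is the right conceptual home for your computation. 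The paper's approach, by contrast, makes transparent that this is a semicontinuity phenomenon and would generalize verbatim to any situation where the relevant degeneration exists, without any combinatorics. Both are short; yours is more self-contained, theirs more geometric.
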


\begin{proof}
There exists a degeneration of $\cO(\vec{a}')$ to $\cO(\vec{a}'')$ over $\mathbb{A}^1$ by \cite[Proposition 1]{Lin2025}. Let $E$ be the total family of vector bundles over $\bA^1$, so that $\cO(\vec{a})^{\vee}\otimes E$ and $E^{\vee} \otimes \cO(\vec{a})$ are flat over $\mathbb{A}^1$. By semicontinuity, $h^0$ can only increase or stay constant over the central fiber. 
\end{proof}

\begin{lemma} \label{lem:ineq2}
For any tuples $\vec{a}$, and $\pvec{a}'$ with $\vec{a}>\pvec{a}'$, we have \[\hom(\oo(\vec{a}),\oo(\pvec{a}'))-\END(\oo(\pvec{a}')<0.\]
\end{lemma}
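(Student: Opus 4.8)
The plan is to reduce the statement to the case where $\vec{a}$ and $\pvec{a}'$ differ by a single balancing move, and then verify that case by a direct dimension count.

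First I would invoke the standard description of the dominance order on splitting types of a fixed rank and degree: since $\vec{a} > \pvec{a}'$, there is a chain $\vec{a} = \vec{a}^{(0)} > \vec{a}^{(1)} > \cdots > \vec{a}^{(k)} = \pvec{a}'$ in which each $\vec{a}^{(\ell)}$ is obtained from $\vec{a}^{(\ell+1)}$ by replacing a pair of entries $p \le q$ with $q - p \ge 2$ by $p+1$ and $q-1$. Setting $\vec{c} := \vec{a}^{(k-1)}$, we have $\vec{a} \ge \vec{c}$, and $\vec{c}$ differs from $\pvec{a}'$ by exactly one such move. Applying Lemma~\ref{lem:ineq1}(2) with the first argument degenerating from $\vec{a}$ to $\vec{c}$ (which is legitimate as $\vec{a} \ge \vec{c}$) and with the target fixed at $\pvec{a}'$, we obtain $\hom(\oo(\vec{a}),\oo(\pvec{a}')) \le \hom(\oo(\vec{c}),\oo(\pvec{a}'))$. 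Hence it suffices to prove the strict inequality $\hom(\oo(\vec{c}),\oo(\pvec{a}')) < \END(\oo(\pvec{a}'))$.

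For this I would write $\oo(\pvec{a}') = \oo(\vec{w}) \oplus \oo(s) \oplus \oo(t)$ and $\oo(\vec{c}) = \oo(\vec{w}) \oplus \oo(s+1) \oplus \oo(t-1)$ with $s \le t$ and $t \ge s+2$, and compare $\END(\oo(\pvec{a}'))$ with $\hom(\oo(\vec{c}),\oo(\pvec{a}'))$ summand by summand, using $\hom(\oo(a),\oo(b)) = \max(0,b-a+1)$. The contributions $\hom(\oo(\vec{w}),\oo(\vec{w}))$ and $\hom(\oo(\vec{w}),\oo(s)\oplus\oo(t))$ are identical on the two sides. Among the remaining terms, the pure $2\times 2$ block contributes $\hom(\oo(s),\oo(s)) + \hom(\oo(s),\oo(t)) + \hom(\oo(t),\oo(s)) + \hom(\oo(t),\oo(t)) = (t-s)+3$ to $\END(\oo(\pvec{a}'))$ but only $\hom(\oo(s+1),\oo(s)) + \hom(\oo(s+1),\oo(t)) + \hom(\oo(t-1),\oo(s)) + \hom(\oo(t-1),\oo(t)) = (t-s)+2$ to $\hom(\oo(\vec{c}),\oo(\pvec{a}'))$; and the terms mapping into $\oo(\vec{w})$ contribute $\hom(\oo(s),\oo(\vec{w})) + \hom(\oo(t),\oo(\vec{w}))$ against $\hom(\oo(s+1),\oo(\vec{w})) + \hom(\oo(t-1),\oo(\vec{w}))$, with difference $\#\{l : w_l \ge s\} - \#\{l : w_l \ge t-1\}$, which is $\ge 0$ because $s \le t-1$. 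Summing, $\END(\oo(\pvec{a}')) - \hom(\oo(\vec{c}),\oo(\pvec{a}')) \ge 1 > 0$, which is exactly the required strict inequality.

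I expect the reduction step to be the only subtle point: one must check that a single balancing move above $\pvec{a}'$ still lies $\le \vec{a}$ (immediate from the chain) and that Lemma~\ref{lem:ineq1}(2) points in the direction that \emph{helps} rather than hurts. The explicit count is routine; the one thing to watch is the bookkeeping of the small groups $h^0(\oo(-1)) = 0$, $h^0(\oo(0)) = 1$, $h^0(\oo(1)) = 2$, which is precisely what produces the net surplus of $1$. As a sanity check one can instead observe that $\oo(\vec{a})^{\vee}\otimes\oo(\pvec{a}')$ and $\oo(\pvec{a}')^{\vee}\otimes\oo(\pvec{a}')$ have equal rank $n^2$ and equal degree $0$ (as $\deg\vec{a} = \deg\pvec{a}'$), so that $\END(\oo(\pvec{a}')) - \hom(\oo(\vec{a}),\oo(\pvec{a}')) = \ext^1(\oo(\pvec{a}'),\oo(\pvec{a}')) - \ext^1(\oo(\vec{a}),\oo(\pvec{a}'))$, and the same move-by-move analysis applied to $h^1$ (upper semicontinuous, exactly as in Lemma~\ref{lem:ineq1}) gives the strict drop.
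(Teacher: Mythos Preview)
Your proposal is correct and follows essentially the same approach as the paper: reduce via Lemma~\ref{lem:ineq1}(2) to the case where $\vec{a}$ is a single balancing move above $\pvec{a}'$, then compute the difference directly. The paper's computation is packaged slightly differently (it counts $-\#\{a_k' : a_k' \ge a_i'\} + \#\{a_k' : a_k' \ge a_j'-1\}$ rather than splitting off the $2\times 2$ block and the $\vec{w}$-terms separately), but your summand-by-summand bookkeeping is equivalent and arguably more transparent.
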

\begin{proof}
Let $\pvec{a}''=(a'_1,\dots,a'_n)$. Since $\vec{a}>\pvec{a}'$ there is some splitting type \[\pvec{a}''=(a'_1,\dots,a'_i+1, \dots, a'_j-1, \dots,a'_n)\]
such that $\pvec{a} > \pvec{a}'' > \pvec{a}'$, so without loss of generality, assume $\vec{a}$ is of this form. Then 
\begin{align*}
    \hom(\oo(\vec{a}),\oo(\pvec{a}'))-\END(\oo(\pvec{a}')) 
    & =\hom(\oo(a'_i+1, a'_j-1),\oo(\pvec{a}'))-\hom(\oo(a'_i, a'_j),\oo(\pvec{a}')) \\ 
    &= -\#\{a_{k}': a_{k}' \geq a_i'\} + \#\{a_k': a_k' \geq a_{j}'-1\}.
\end{align*}
Since $a_j'-1 \geq a_i' + 1$, we have $\#\{a_{k}': a_{k}' \geq a_i'\} > \#\{a_k': a_k' \geq a_{j}'-1\}$, so $\hom(\oo(\vec{a}),\oo(\pvec{a}')) < \END(\oo(\pvec{a}'))$ as desired. 
\end{proof}

\begin{lemma} \label{lem:greater_pair}
Suppose $(\vec{b},\vec{a})$ and $(\pvec{b}',\pvec{a}')$ are realizable pairs and $\vec{a}\geq\pvec{a}'$ and $\vec{b}\geq\pvec{b}'$. If either inequality is strict, then $(\pvec{b}',\pvec{a}')$ is not strongly stable. 
\end{lemma}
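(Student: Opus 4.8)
The plan is to assume for contradiction that $(\pvec{b}', \pvec{a}')$ is strongly stable, and then play the dimension formulas $\dim \Sigma_{\pvec{a}'}(\cQ) = \hom(\cO(\vec{e}),\cO(\pvec{a}')) - \END(\cO(\pvec{a}'))$ and $\dim \Sigma_{\pvec{b}'}(\cS) = \hom(\cO(\pvec{b}'),\cO(\vec{e})) - \END(\cO(\pvec{b}'))$ from Proposition \ref{prop:strataQuotLF} against Lemmas \ref{lem:ineq1} and \ref{lem:ineq2}. The two hypotheses ``$(\vec{b},\vec{a})$ realizable'' and ``$(\pvec{b}',\pvec{a}')$ realizable'' each supply a short exact sequence with middle term $\cO(\vec{e})$; the key move is to apply $\Hom(-,-)$ to the sequence $0 \to \cO(\vec{b}) \to \cO(\vec{e}) \to \cO(\vec{a}) \to 0$ coming from realizability of $(\vec{b},\vec{a})$, in whichever variable turns the weak inequalities $\vec{b} \geq \pvec{b}'$, $\vec{a} \geq \pvec{a}'$ into applications of Lemma \ref{lem:ineq1} and the strict inequality into an application of Lemma \ref{lem:ineq2}.

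First suppose $\vec{a} > \pvec{a}'$. Strong stability gives $\hom(\cO(\pvec{b}'),\cO(\pvec{a}')) = \dim \Sigma_{\pvec{a}'}(\cQ) = \hom(\cO(\vec{e}),\cO(\pvec{a}')) - \END(\cO(\pvec{a}'))$. Applying $\Hom(-, \cO(\pvec{a}'))$ to $0 \to \cO(\vec{b}) \to \cO(\vec{e}) \to \cO(\vec{a}) \to 0$, the long exact sequence yields $\hom(\cO(\vec{e}),\cO(\pvec{a}')) \leq \hom(\cO(\vec{a}),\cO(\pvec{a}')) + \hom(\cO(\vec{b}),\cO(\pvec{a}'))$. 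By Lemma \ref{lem:ineq1}(2), applied with $\vec{b} \geq \pvec{b}'$, we get $\hom(\cO(\vec{b}),\cO(\pvec{a}')) \leq \hom(\cO(\pvec{b}'),\cO(\pvec{a}'))$. Substituting and cancelling the common term $\hom(\cO(\pvec{b}'),\cO(\pvec{a}'))$ leaves $\END(\cO(\pvec{a}')) \leq \hom(\cO(\vec{a}),\cO(\pvec{a}'))$, which contradicts Lemma \ref{lem:ineq2}.

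Now suppose instead $\vec{b} > \pvec{b}'$. The argument is the dual: use the other half of strong stability, $\hom(\cO(\pvec{b}'),\cO(\pvec{a}')) = \dim \Sigma_{\pvec{b}'}(\cS) = \hom(\cO(\pvec{b}'),\cO(\vec{e})) - \END(\cO(\pvec{b}'))$, apply $\Hom(\cO(\pvec{b}'), -)$ to the same sequence to obtain $\hom(\cO(\pvec{b}'),\cO(\vec{e})) \leq \hom(\cO(\pvec{b}'),\cO(\vec{b})) + \hom(\cO(\pvec{b}'),\cO(\vec{a}))$, and bound $\hom(\cO(\pvec{b}'),\cO(\vec{a})) \leq \hom(\cO(\pvec{b}'),\cO(\pvec{a}'))$ by Lemma \ref{lem:ineq1}(1) using $\vec{a} \geq \pvec{a}'$. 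Cancelling as before leaves $\END(\cO(\pvec{b}')) \leq \hom(\cO(\pvec{b}'),\cO(\vec{b}))$. This contradicts Lemma \ref{lem:ineq2} applied to the pair obtained by negating and reversing all entries: the dominance order is preserved under $\vec{v} \mapsto -\vec{v}^{\mathrm{rev}}$ (the tuples have equal degree), so $-\vec{b} > -\pvec{b}'$, while dualizing bundles gives $\hom(\cO(\pvec{b}'),\cO(\vec{b})) = \hom(\cO(-\vec{b}),\cO(-\pvec{b}'))$ and $\END(\cO(\pvec{b}')) = \END(\cO(-\pvec{b}'))$. Since at least one of $\vec{a} \geq \pvec{a}'$, $\vec{b} \geq \pvec{b}'$ is strict, one of the two cases applies, completing the proof.

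I do not anticipate a serious obstacle: all the analytic content is already packaged in Lemmas \ref{lem:ineq1} and \ref{lem:ineq2} and in Proposition \ref{prop:strataQuotLF}. The only real decision is to apply $\Hom$ to the $(\vec{b},\vec{a})$-sequence rather than the $(\pvec{b}',\pvec{a}')$-sequence, and in the variable that makes the monotonicity of Lemma \ref{lem:ineq1} push against the strict drop of Lemma \ref{lem:ineq2}; the places where I would be most careful are the bookkeeping of which splitting type is ``more balanced'' when quoting Lemma \ref{lem:ineq1}, and the small negate-reverse duality used in the case $\vec{b} > \pvec{b}'$.
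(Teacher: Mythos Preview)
Your proof is correct and follows the same overall strategy as the paper: compare $\dim \Sigma_{\pvec{a}'}(\cQ)$ (or $\dim \Sigma_{\pvec{b}'}(\cS)$) against $\hom(\cO(\pvec{b}'),\cO(\pvec{a}'))$ by way of Lemmas \ref{lem:ineq1} and \ref{lem:ineq2}. The one substantive difference is how you obtain the key inequality
\[
\hom(\cO(\vec{e}),\cO(\pvec{a}')) \leq \hom(\cO(\vec{a}),\cO(\pvec{a}')) + \hom(\cO(\vec{b}),\cO(\pvec{a}')).
\]
You get it directly from the long exact sequence of $\Hom(-,\cO(\pvec{a}'))$ applied to the short exact sequence witnessing realizability of $(\vec{b},\vec{a})$. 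The paper instead invokes the external fact $\mathrm{sort}(\vec{b},\vec{a}) \leq \vec{e}$ from \cite[Lemma 3]{Lin2025} together with Lemma \ref{lem:ineq1}, and then splits $\hom(\cO(\mathrm{sort}(\vec{b},\vec{a})),\cO(\pvec{a}'))$ as a sum. Your route is more self-contained. A second minor difference: the paper first disposes of the case where exactly one inequality is strict by noting that $(\pvec{b}',\pvec{a}')$ then fails stability outright, and only runs the dimension argument when both are strict; your argument handles all cases uniformly, at the cost of the small negate-and-reverse duality check for Lemma \ref{lem:ineq2} in the $\vec{b} > \pvec{b}'$ case.
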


\begin{proof}
If exactly one of the inequalities is strict, then in fact $(\pvec{b}',\pvec{a}')$ is not stable. So it remains to consider the case where both inequalities are strict. 

Let $\text{sort}(\vec{b}, \vec{a})$ denote the splitting type obtained by concatenating $\vec{b}, \vec{a}$, and sorting it into a weakly increasing sequence. By \cite[Lemma 3]{Lin2025}, $\text{sort}(\vec{b},\vec{a})\leq \vec{e}$, so we have 
\begin{align*}
    \dim U_{\pvec{a}'}-\dim T_{\pvec{b}',\pvec{a}'}&= \hom(\oo(\vec{e}),\oo(\pvec{a}'))-\END(\oo(\pvec{a}'))-\hom(\oo(\pvec{b}'),\oo(\pvec{a}'))\\
     &\leq \hom(\oo(\text{sort}(\vec{b},\vec{a})),\oo(\pvec{a}'))-\END(\oo(\pvec{a}'))-\hom(\oo(\pvec{b}'),\oo(\pvec{a}'))\\
     &\leq \hom(\oo(\vec{a}),\oo(\pvec{a}')) -\END(\oo(\pvec{a}')),
\end{align*}
where the first inequality is a consequence of Lemma \ref{lem:ineq1}. By Lemma \ref{lem:ineq2}, $\dim U_{\pvec{a}'} < \dim T_{\pvec{b}',\pvec{a}'}$, so $(\pvec{b}',\pvec{a}')$ is not strongly stable.
\end{proof}

\begin{lemma}\label{lem:block_overlapping_fails_strongly}
Let $\vec{a}_-=(a_1,\dots,a_{k}),\vec{a}_+=(a_{k+1},\dots,a_n)$, $\vec{b}_-=(b_1,\dots,b_{\ell}),\vec{b}_+=(b_{\ell+1},\dots,b_m)$. Suppose $(\vec{b}_-,\vec{a}_-)$ is a stable pair with respect to $\vec{e}_{-}:=(e_1,\dots,e_{k+\ell})$, and $(\vec{b}_+,\vec{a}_+)$ is stable with respect to $\vec{e}_+:=(e_{k+\ell+1},\dots,e_{m+n})$. Further, assume that the greatest entry of $\pvec{a}_-$ is less than the least entry of $\pvec{e}_+$. Then $(\vec{b},\vec{a})$ is strongly stable with respect to $\vec{e}$ if and only if it is stable, $(\vec{b}_-,\vec{a}_-)$ and $(\vec{b}_+,\vec{a}_+)$ are strongly stable with respect to $\vec{e}_-$ and $\vec{e}_+$, respectively, and $a_{-,\text{last}}<b_{+,1}$ for all $i,j$. 
\end{lemma}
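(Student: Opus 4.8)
The plan is to combine the additivity of $\hom$ and $\END$ across the block decomposition with the strong-stability criterion $\dim \Hom(\cO(\vec b),\cO(\vec a)) = \dim \Sigma_{\vec a}(\cQ) = \dim \Sigma_{\vec b}(\cS)$ from Proposition~\ref{prop:strataQuotLF}, reducing everything to numerical (in)equalities among $\hom$'s of the blocks. First I would set up the vanishing that makes the decomposition additive: since the greatest entry of $\vec a_-$ is less than the least entry of $\vec e_+$, and $\mathrm{sort}(\vec b_-,\vec a_-) \leq \vec e_-$ with all entries of $\vec b_-$ bounded by entries of $\vec e_-$ (hence by entries of $\vec a_-$ up to the block structure), the ``off-diagonal'' $\Ext^1$ and even $\Hom$ groups between the $+$ and $-$ pieces vanish in the relevant direction. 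Concretely, $\hom(\cO(\vec a_+),\cO(\vec a_-)) = 0$, $\hom(\cO(\vec e_+),\cO(\vec a_-)) = 0$, $\hom(\cO(\vec b_+),\cO(\vec a_-)) = 0$, so that
\[
\hom(\cO(\vec e),\cO(\vec a)) - \END(\cO(\vec a)) = \bigl(\dim \Sigma_{\vec a_-}(\cQ)\bigr) + \bigl(\dim \Sigma_{\vec a_+}(\cQ)\bigr) + \Bigl(\hom(\cO(\vec e_-),\cO(\vec a_+)) - \hom(\cO(\vec a_-),\cO(\vec a_+))\Bigr),
\]
and dually for $\dim \Sigma_{\vec b}(\cS)$ and for $\hom(\cO(\vec b),\cO(\vec a))$. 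The cross term $\hom(\cO(\vec e_-),\cO(\vec a_+)) - \hom(\cO(\vec a_-),\cO(\vec a_+))$ is handled by applying $\Hom(-,\cO(\vec a_+))$ to the short exact sequence $0 \to \cO(\vec b_-) \to \cO(\vec e_-) \to \cO(\vec a_-) \to 0$ (which exists since $(\vec b_-,\vec a_-)$ is realizable): because $\vec a_-$ has all entries below those of $\vec e_+ \geq \vec a_+$, one gets $\ext^1(\cO(\vec a_-),\cO(\vec a_+)) = 0$, so the cross term equals $\hom(\cO(\vec b_-),\cO(\vec a_+))$, and similarly the $\vec b$-side cross term equals $\hom(\cO(\vec b_+),\cO(\vec e_-)) - \hom(\cO(\vec b_+),\cO(\vec b_-)) = \hom(\cO(\vec b_+),\cO(\vec a_-))$, which is $0$ by the vanishing above.

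Granting this bookkeeping, the forward direction is immediate: if $(\vec b,\vec a)$ is strongly stable, it is in particular stable, and by Lemma~\ref{lem:greater_pair} (applied to the realizable sub-pairs $(\vec b_-,\vec a_-)$, $(\vec b_+,\vec a_+)$ versus their generizations keeping the rest of the pair fixed) neither sub-pair can be non-strongly-stable without violating stability or producing a strictly more balanced realizable pair dominating $(\vec b,\vec a)$. The condition $a_{-,\last} < b_{+,1}$ is forced because otherwise the block structure collapses and one can balance across, producing a strictly larger realizable pair; I would phrase this via the combinatorial description of stable pairs (Theorem~\ref{thm:stable_pairs}) — a stable pair is $\Pair(m',n',\Lambda,\vec\delta)$, and the separating inequality is exactly the strong-stability inequality $\alpha(Q_i,\delta_i)_{\last} < \beta(P_{i+1},\delta_{i+1})_1$ for the relevant adjacent blocks.

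For the reverse direction, suppose $(\vec b,\vec a)$ is stable, each of $(\vec b_\pm, \vec a_\pm)$ is strongly stable, and $a_{-,\last} < b_{+,1}$. By the additivity computation, $\dim \Sigma_{\vec a}(\cQ) = \dim \Sigma_{\vec a_-}(\cQ) + \dim \Sigma_{\vec a_+}(\cQ) + \hom(\cO(\vec b_-),\cO(\vec a_+))$ and $\hom(\cO(\vec b),\cO(\vec a)) = \hom(\cO(\vec b_-),\cO(\vec a_-)) + \hom(\cO(\vec b_+),\cO(\vec a_+)) + \hom(\cO(\vec b_-),\cO(\vec a_+)) + \hom(\cO(\vec b_+),\cO(\vec a_-))$; using strong stability of the two sub-pairs to equate $\dim \Sigma_{\vec a_\pm}(\cQ) = \hom(\cO(\vec b_\pm),\cO(\vec a_\pm))$, and the vanishing $\hom(\cO(\vec b_+),\cO(\vec a_-)) = 0$, these two numbers agree; the identical argument on the $\cS$-side gives $\dim \Sigma_{\vec b}(\cS) = \hom(\cO(\vec b),\cO(\vec a))$. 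Hence all three dimensions coincide and $(\vec b,\vec a)$ is strongly stable.

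The main obstacle I anticipate is not the dimension count but justifying the vanishing statements cleanly and checking that the block hypotheses ($a_{-,\last}$ less than the least entry of $\vec e_+$, plus $\mathrm{sort}(\vec b_-,\vec a_-) \leq \vec e_-$ from \cite[Lemma 3]{Lin2025}) really do force every ``wrong-direction'' $\Hom$ and $\Ext^1$ to vanish — in particular controlling $\ext^1(\cO(\vec a_-),\cO(\vec a_+))$ and $\hom(\cO(\vec b_+),\cO(\vec a_-))$ requires knowing the entries of $\vec b_+$ are not too negative relative to $\vec a_-$, which should follow from $b_{+,1} > a_{-,\last}$ (the ``only if'' conclusion we are simultaneously establishing, so some care with the logical order is needed — I would prove the $a_{-,\last} < b_{+,1}$ necessity first, independently, using the balancing/combinatorial description, and then use it freely in the dimension computation).
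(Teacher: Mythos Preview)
Your additivity computation and the reverse direction essentially match the paper's argument. The forward direction, however, has genuine gaps.

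First, invoking Lemma~\ref{lem:greater_pair} to force strong stability of the sub-pairs does not work: if $(\vec b_-,\vec a_-)$ is stable but not strongly stable, there is no strictly more balanced realizable pair dominating it (that is precisely what stability says), so Lemma~\ref{lem:greater_pair} has nothing to act on. Non-strong-stability of a stable pair is a tangent-space dimension mismatch, not the existence of a generization. Second, your proposal to establish $a_{-,\last} < b_{+,1}$ via ``the strong-stability inequality $\alpha(Q_i,\delta_i)_{\last} < \beta(P_{i+1},\delta_{i+1})_1$'' is circular: that inequality is the content of Theorem~\ref{thm:strongly_stable}, whose proof relies on the present lemma. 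And even granting an independent combinatorial argument for $a_{-,\last}<b_{+,1}$, you would still need the two sub-defects $\dim\Sigma_{\vec a_\pm}(\cQ) - \hom(\cO(\vec b_\pm),\cO(\vec a_\pm))$ to vanish individually, knowing only that their sum does.

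The paper's fix is a one-line observation that handles both directions at once and dissolves the circularity you flagged: rather than assuming $\hom(\cO(\vec b_+),\cO(\vec a_-))=0$ at the outset, keep it as a third summand in your decomposition,
\[
\dim\Sigma_{\vec a}(\cQ) - \hom(\cO(\vec b),\cO(\vec a)) \;=\; \sum_{\epsilon\in\{-,+\}}\Bigl(\dim\Sigma_{\vec a_\epsilon}(\cQ) - \hom(\cO(\vec b_\epsilon),\cO(\vec a_\epsilon))\Bigr) \;-\; \hom(\cO(\vec b_+),\cO(\vec a_-)).
\]
Each of the three terms on the right is nonpositive (the first two because each stratum has dimension at most that of the tangent space to the Quot scheme; the third trivially), so the left side vanishes if and only if all three do. This simultaneously yields strong stability of the sub-pairs and the inequality $a_{-,\last}<b_{+,1}$ (equivalently $\hom(\cO(\vec b_+),\cO(\vec a_-))=0$). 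Note also that for a stable pair $\dim\Sigma_{\vec a}(\cQ)=\dim\Sigma_{\vec b}(\cS)$ automatically, since the closures of the two strata coincide; your separate $\cS$-side verification is therefore unnecessary.
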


\begin{proof}
Since we assumed stability, $(\vec{b},\vec{a})$ is strongly stable if and only if the tangent space has the correct dimension, namely
\[\hom(\oo(\vec{e}),\oo(\vec{a}))-\END(\oo(\vec{a}))-\hom(\oo(\vec{b}),\oo(\vec{a}))=0.\]
However, since the top entry of $\pvec{a}_-$ is less than the smallest entry of $\pvec{e}_+$, \[\hom\left(\oo(\pvec{e}_+),\oo(\pvec{a}_-)\right)=0,\]
and similarly 
\[\hom\left(\oo(\pvec{a}_+),\oo(\pvec{a}_-)\right)=0.\]
So we have
\begin{align*}
\hom(\oo(\vec{e}),\oo(\vec{a}))&=\hom(\oo(\vec{e}_-),\oo(\vec{a}_-))+\hom(\oo(\vec{e}_-),\oo(\vec{a}_+))+\hom(\oo(\vec{e}_+),\oo(\vec{a}_+)), \\
\END(\oo(\vec{a}))&=\END(\oo(\vec{a}_-))+\hom(\oo(\vec{a}_-),\oo(\vec{a}_+))+\END(\oo(\vec{a}_+)),\\
\hom(\oo(\vec{b}),\oo(\vec{a}))&=\hom(\oo(\vec{b}_-),\oo(\vec{a}_-))+\hom(\oo(\vec{b}_-),\oo(\vec{a}_+)) \\ &+\hom(\oo(\vec{b}_+),\oo(\vec{a}_-))+\hom(\oo(\vec{b}_+),\oo(\vec{a}_+)).
\end{align*}
Therefore
\begin{align*}
    & \hom(\oo(\vec{e}),\oo(\vec{a}))-\END(\oo(\vec{a}))-\hom(\oo(\vec{b}),\oo(\vec{a})) \\
    = \ & \left(\hom(\oo(\vec{e}_-),\oo(\vec{a}_-))-\END(\oo(\vec{a}_-))-\hom(\oo(\vec{b}_-),\oo(\vec{a}_-))\right) \\ 
     + \ & \left(\hom(\oo(\vec{e}_+),\oo(\vec{a}_+))-\END(\oo(\vec{a}_+))- \hom(\oo(\vec{b}_+),\oo(\vec{a}_+))\right)\\
    + \ & \left(-\hom(\oo(\vec{b}_+),\oo(\vec{a}_-))\right).
\end{align*}
Note that each of the three terms above is nonpositive, so the left hand side is zero if and only if each term on the right is zero. This happens if and only if $(\vec{b}_-,\vec{a}_-)$ and $(\vec{b}_+,\vec{a}_+)$ are strongly stable with respect to $\vec{e}_-$ and $\vec{e}_+$, respectively, and $a_{-,i}<b_{+,j}$ for all $i,j$.
\end{proof}

\begin{theorem} \label{thm:strongly_stable}
Let $(\vec{b}, \vec{a})$ be a stable pair, associated to the combinatorial data of $(m', n', \Lambda)$ as in Definition \ref{def:stable_pairs}, so that
\begin{align*}
\vec{a} &= (e_1, \dots, e_{n'}, \alpha(Q_1, \delta_1), \dots, \alpha(Q_r, \delta_r)) \\ 
\vec{b} &= (\beta(P_1, \delta_1), \dots, \beta(P_r, \delta_r), e_{m+n-m'+1}, \dots, e_{m+n})
\end{align*}
Then $(\vec{b}, \vec{a})$ is strongly stable if and only if 
\begin{align*}
    e_{n'} & < \beta(P_1, \delta_1)_1, \\ 
    \alpha(Q_i, \delta_i)_{\last} & < \beta(P_{i+1}, \delta_{i+1})_1 \text{ for } 1 \leq i < r,\\ 
    \alpha(Q_r, \delta_r)_{\last}& < e_{m+n-m'+1}. 
\end{align*} 

\end{theorem}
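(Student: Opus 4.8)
The plan is to reduce to the single--block situation of Example \ref{exmp:r=1} by repeatedly splitting $\vec{e}$ at block boundaries and applying Lemma \ref{lem:block_overlapping_fails_strongly}. Throughout I work with the combinatorial data of $(\vec{b},\vec{a})$ in the canonical form produced by a minimal balancing datum, exactly as in the proof of Theorem \ref{thm:stable_pairs}: $r$ is the number of clumps in the decomposition of Lemma \ref{lem:first_block_stable}, the $s$-th clump occupies the $\vec{e}$-indices $P_s\sqcup Q_s$ (together with the length-$n'$ prefix when $s=1$ and the length-$m'$ suffix when $s=r$), and its associated sub-pair is $(\beta(P_s,\delta_s),\alpha(Q_s,\delta_s))$ (with the prefix prepended to $\vec{a}^{(1)}$ and the suffix appended to $\vec{b}^{(r)}$). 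The key structural input is that consecutive clumps are genuinely separated in $\vec{e}$: iterating Lemma \ref{lem:first_block_stable}(1) gives $\alpha(Q_s,\delta_s)_{\last}<\min\vec{e}^{(s+1)}=e_{P_{s+1,1}}$ for each $1\le s<r$. Each clump sub-pair, restricted to its stretch of $\vec{e}$, is $\Pair$ of valid combinatorial data, hence stable by Theorem \ref{thm:stable_pairs}.

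For the ``if'' direction, assume the displayed inequalities. Since $\beta(P_{s+1},\delta_{s+1})_1\le e_{P_{s+1,1}}$, the hypothesis $\alpha(Q_s,\delta_s)_{\last}<\beta(P_{s+1},\delta_{s+1})_1$ gives $\alpha(Q_s,\delta_s)_{\last}<\min\vec{e}^{(s+1)}$, so at the boundary between clump $s$ and clumps $>s$ the splitting hypothesis of Lemma \ref{lem:block_overlapping_fails_strongly} is met; iterating, $(\vec{b},\vec{a})$ is strongly stable if and only if every clump sub-pair is strongly stable and $\alpha(Q_s,\delta_s)_{\last}<\beta(P_{s+1},\delta_{s+1})_1$ for all $1\le s<r$ — the latter holding by assumption. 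For $1<s<r$ the clump is a single $(P_s,Q_s)$-block with $m'=n'=0$, hence strongly stable by Example \ref{exmp:r=1}. For $s=1$, the hypothesis $e_{n'}<\beta(P_1,\delta_1)_1\le e_{n'+1}$ forces a strict jump of $\vec{e}$ just after the prefix, and a further application of Lemma \ref{lem:block_overlapping_fails_strongly} splits off the prefix (a strongly stable pair with empty $\vec{b}$), reducing to the $m'=n'=0$ case; the separation condition it produces is precisely $e_{n'}<\beta(P_1,\delta_1)_1$. Symmetrically, using $\alpha(Q_r,\delta_r)_{\last}\ge e_{Q_{r,\last}}=e_{m+n-m'}$, the hypothesis $\alpha(Q_r,\delta_r)_{\last}<e_{m+n-m'+1}$ gives a strict jump before the suffix and reduces clump $r$ to the $m'=n'=0$ case, producing exactly the condition $\alpha(Q_r,\delta_r)_{\last}<e_{m+n-m'+1}$.

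For the ``only if'' direction one reads the same chain of equivalences backwards: if some displayed inequality fails, then either a clump sub-pair fails to be strongly stable or one of the separations $\alpha(Q_s,\delta_s)_{\last}<\beta(P_{s+1},\delta_{s+1})_1$ fails, and Lemma \ref{lem:block_overlapping_fails_strongly} (applied at the relevant strictly separated boundary) then shows $(\vec{b},\vec{a})$ is not strongly stable. The delicate point, and the main obstacle, is that this argument needs every boundary in the canonical data to admit an honest split — that is, $\vec{e}$ must genuinely jump between consecutive clumps and between the prefix/suffix and the adjacent block. Between consecutive clumps this is exactly Lemma \ref{lem:first_block_stable}(1); for the prefix and suffix one argues from minimality of the balancing datum (were $e_{n'}=e_{n'+1}$ with $e_{n'+1}$ used by $\tau$, one could lower $\tau$) that in the canonical data these boundaries are strict as soon as the corresponding inequality can hold, and in the contrary case one directly exhibits a strictly more balanced realizable pair — obtained by absorbing the flat boundary $\vec{e}$-index into the prefix or suffix — and invokes Lemma \ref{lem:greater_pair}. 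This residual bookkeeping is a repeat of the case analysis in the proof of Lemma \ref{lem:first_block_stable} and is where the remaining work lies.
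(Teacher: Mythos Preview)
Your approach is the same as the paper's: peel off the blocks one at a time using Lemma~\ref{lem:block_overlapping_fails_strongly}, with Example~\ref{exmp:r=1} as the base case. The paper's proof is a four-line induction on $r$ that does exactly this (first handling $r=1$, $m'=n'=0$; then $r=1$, $n'=0$; then general $r=1$; then $r>1$), and your proposal is a more explicit version of the same induction.

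You have, however, been more careful than the paper on one point. At an interior block boundary the hypothesis of Lemma~\ref{lem:block_overlapping_fails_strongly}, namely $\alpha(Q_s,\delta_s)_{\last}<e_{P_{s+1,1}}$, follows from $\delta_s\le\Delta Q_s$, as you note; the paper uses this silently. At the prefix and suffix boundaries the hypothesis is \emph{not} automatic from the bare definition of a combinatorially stable pair, and you are right to flag it. The paper's proof simply says ``apply Lemma~\ref{lem:block_overlapping_fails_strongly}'' at those boundaries without comment. Your instinct to work with the canonical data coming from a minimal balancing datum, and to fall back on Lemma~\ref{lem:greater_pair} when a boundary is flat, is the correct way to close this; but as you acknowledge, you have not actually carried it out. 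Be aware that the underlying subtlety is that the data $(m',n',\Lambda,\vec{\delta})$ representing a given stable pair need not be unique (e.g.\ a trailing $e_{m+n-m'+1}$ equal to $e_{m+n-m'}$ can be absorbed into $Q_r$ instead of the suffix), and the displayed inequalities can hold for one representation and fail for another. So what you really need to pin down is that for the \emph{canonical} representation the prefix/suffix boundaries in $\vec{e}$ are strict, and then the chain of equivalences via Lemma~\ref{lem:block_overlapping_fails_strongly} goes through in both directions without further case analysis.
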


\begin{proof}
    We use induction on $r$. As a base case, take $r=1$ and $m'=n'=0$, which gives a strongly stable pair as shown in Example \ref{exmp:r=1}. For $r=1$ and $n'=0$, we can let $\vec{b}_-=\beta(P_1,\delta_1)$, $\vec{a}_-=\alpha(Q_1,\delta_1)$, $\vec{b}_+=(b_{m-m'+1},\dots,b_m)$, $\vec{a}_+=()$, and apply Lemma \ref{lem:block_overlapping_fails_strongly}. In the general case of $r=1$, we can now apply Lemma \ref{lem:block_overlapping_fails_strongly}, setting $\vec{b}_-=()$, $\vec{a}_-=(a_1,\dots, a_{n'})$. For $r>1$, apply Lemma \ref{lem:block_overlapping_fails_strongly} to $\vec{b}_-=\beta(P_1,\delta_1)$, $\vec{a}_-=(a_1,\dots,a_{n'},\alpha(Q_1,\delta_1))$.
\end{proof}

\section{Irreducibility}
\begin{lemma}\label{lem:most_balanced}
Fix $d$, and let $d'=\deg\vec{e}-d$. Let
\begin{align*}
    f(j)&:=d-(e_1+\dots+e_{j}+e_{j+2}+\dots+e_{n+1}),\\
    g(i)&:=e_{n}+\dots+e_{m+n-i-1}+e_{m+n-i+1}+\dots+e_{m+n}-d',
\end{align*}
and $n',m'\geq 0$ minimal such that $f(n'),g(m')\geq 0$ with the inequality strict when $e_{n'+1}=e_{n'+2}$ (resp. $e_{m+n-m'}=e_{m+n-m'+1}$). Then:
\begin{enumerate}
    \item the most balanced tuple $\vec{a}$ of degree $d$ such that there exists a surjection $\oo(\vec{e})\to\oo(\vec{a})$ is unique, and given by
    \[\vec{a}= \left(e_1,\dots,e_{n'},\alpha((n'+2,\dots,n+1), f(n'))\right);\]
    \item the most balanced tuple $\vec{b}$ of degree $d'$ such that there exists an injective map with locally free cokernel $\oo(\vec{b})\to \oo(\vec{e})$ is unique and given by
    \[\vec{b}= \left(\beta((n, \dots,m+n-m'-1), g(m')), e_{m+n-m'+1},\dots,e_{m+n}\right)\]
\end{enumerate}
\end{lemma}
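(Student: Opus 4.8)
The plan is to prove part (1) directly and obtain part (2) from it by the duality $\cO(\vec f)\mapsto\cO(\vec f)^{\vee}$, which interchanges sub-bundles with locally free cokernel and quotients, sending $\vec e$ to the sorted tuple $(-e_{m+n},\dots,-e_{1})$ and the kernel degree $d'$ to $-d'$; under this translation the displayed formula for $\vec b$ is exactly the one produced by part (1), so once (1) is established (2) is a bookkeeping exercise.

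For part (1), the first step is a combinatorial reformulation. By Proposition \ref{prop:onlyif}, a splitting type $\vec a$ of rank $n$ admits a surjection $\cO(\vec e)\twoheadrightarrow\cO(\vec a)$ if and only if $\vec a\in M(n'')$ for some $0\le n''\le n$, where $M(n'')$ denotes the set of splitting types of rank $n$ and degree $d$ with $a_j=e_j$ for $j\le n''$ and $a_i\ge e_{i+1}$ for $n''<i\le n$ (take $n''$ to be the length of the maximal constant prefix $a_j=e_j$). Writing an element of $M(n'')$ as $(e_1,\dots,e_{n''},c_{n''+1},\dots,c_n)$, the tail is precisely a tuple dominating $(e_{n''+2},\dots,e_{n+1})$ entrywise with total excess $f(n'')$; hence $M(n'')\ne\emptyset$ iff $f(n'')\ge 0$, and when nonempty its unique most balanced element is
\[
\vec a_{n''}:=\bigl(e_1,\dots,e_{n''},\ \alpha((n''+2,\dots,n+1),f(n''))\bigr)
\]
by definition of $\alpha$. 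It then remains to show that, over all valid indices $n''$, the tuple $\vec a_{n''}$ has a (necessarily unique) dominance-maximum, attained at the index $n'$ of the statement. The bookkeeping here is easy: $f(n''+1)-f(n'')=e_{n''+2}-e_{n''+1}\ge 0$, so $\{n'':f(n'')\ge 0\}$ is an interval $[n'_{\min},n]$, and for $n'_{\min}\le n''<n'$ minimality of $n'$ forces $f(n'')=0$ and $e_{n''+1}=e_{n''+2}$, so the corresponding $\alpha$-tuples are truncations of $\vec e$ and $\vec a_{n''}=\vec a_{n''+1}$, giving $\vec a_{n'}=\vec a_{n'_{\min}}$.

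The substantive point is the monotonicity claim: \emph{if $s$ and $s+1$ are both valid indices then $\vec a_s\ge\vec a_{s+1}$.} Granting this, chaining downward yields $\vec a_{n''}\le\vec a_{n'_{\min}}=\vec a_{n'}$ for every valid $n''$; since every valid $\vec a$ lies in some $M(n'')$ with $n''\ge n'_{\min}$ and thus $\vec a\le\vec a_{n''}\le\vec a_{n'}$, the tuple $\vec a_{n'}$ dominates all valid $\vec a$ and, being a maximum of a poset, is unique — this is (1). I expect this monotonicity step to be the main obstacle.

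To prove it, write $\vec a_s=(e_1,\dots,e_s,\vec\beta)$ and $\vec a_{s+1}=(e_1,\dots,e_s,\vec q)$, where $\vec\beta:=\alpha((s+2,\dots,n+1),f(s))$ and $\vec q:=(e_{s+1},\alpha((s+3,\dots,n+1),f(s+1)))$; these share the prefix $(e_1,\dots,e_s)$, and $\vec\beta,\vec q$ have the same length $n-s$ and the same sum $d-(e_1+\dots+e_s)$, so $\vec a_s\ge\vec a_{s+1}$ is equivalent to $\vec\beta\ge\vec q$. Now $\vec\beta$ is the most balanced tuple lying entrywise above the floor $(e_{s+2},e_{s+3},\dots,e_{n+1})$ with that prescribed sum; let $\vec\beta'$ be the most balanced tuple above the \emph{relaxed} floor $(e_{s+1},e_{s+3},e_{s+4},\dots,e_{n+1})$ with the same sum. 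Since the feasible set for $\vec\beta$ sits inside that for $\vec\beta'$, we get $\vec\beta'\ge\vec\beta$; but then $\beta'_1\ge\beta_1\ge e_{s+2}$, so $\vec\beta'$ in fact lies above the original floor, which forces $\vec\beta'=\vec\beta$. Finally $\vec q$ is feasible for the relaxed problem — its sum is correct, it is weakly increasing, and entrywise $q_1=e_{s+1}$ while $q_{i+1}=\alpha((s+3,\dots,n+1),f(s+1))_i\ge e_{s+2+i}$ — hence $\vec q\le\vec\beta'=\vec\beta$, completing the step. The remaining checks (that $\vec a_{n''}$ is a genuine splitting type in $M(n'')$, the elementary identities for $f$, and the dualization giving (2)) are routine.
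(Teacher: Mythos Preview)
Your argument is correct and follows essentially the same outline as the paper's: both reduce to showing that $\vec a_{n'}$ dominates $\vec a_{k}$ for every valid $k$, where $\vec a_k=(e_1,\dots,e_k,\alpha((k+2,\dots,n+1),f(k)))$. The paper carries out this comparison directly for arbitrary $k>n'$ via partial sums, using the observation that the last $n-k$ entries of $\vec a_{n'}$ are themselves of the form $\alpha((k+2,\dots,n+1),\delta)$ with $\delta\le f(n')\le f(k)$; you instead prove the one-step monotonicity $\vec a_s\ge\vec a_{s+1}$ by a ``relaxed floor'' trick and chain. Both arguments are short and clean; yours has the minor advantage of making the role of the constraint $\alpha(Q,\cdot)\ge e_Q$ completely explicit, while the paper's is slightly more direct. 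Your handling of the range $n'_{\min}\le n''<n'$ and the duality reduction for (2) are fine (the paper simply says ``the argument for (2) is analogous'').
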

\begin{proof}
    We will prove (1) via Proposition \ref{prop:onlyif}. The argument for (2) is analogous. Suppose $\pvec{a}'$ is the splitting type of some degree $d$ quotient of $\oo(\vec{e})$. Then $a'_i=e_i$ for $i\leq n'$, since otherwise, $\pvec{a}'$ would have degree at least $(e_1+\dots+e_{n'-1}+e_{n'+1}+\dots+e_{n+1})>d$. By definition, $\vec{a}$ is the most balanced tuple admitting a surjection from $\cO(\vec{e})$ such that $a_i = e_i$ for $i \leq n'$ and $a_{n'+1} > e_{n'+1}$. Let $k > n'$, and let 
    \[
    \pvec{a}' = (e_1, \dots, e_{k}, \alpha((k+2, \dots, n+1), f(k)).
    \]
    It remains to check that $\vec{a} \geq \pvec{a}'$. For $1 \leq \ell \leq k$, it is clear that 
    \[
    a_1 + \cdots + a_{\ell} \geq a_1' + \cdots + a'_{\ell}.
    \]
    On the other hand, for $k < \ell \leq n$, to check that the condition above,
    it is equivalent to check 
    \[
    a_{\ell+1} + \cdots + a_n \leq a_{\ell+1}' + \cdots + a'_{n}.
    \]
    For this, it suffices to observe that the last $n-k$ entries of $\vec{a}$ can be written as $\alpha((k+2, \dots, n+1), \delta)$, where $\delta \leq f(n') \leq f(k)$.    
\end{proof}

\begin{theorem}
Let $d=\deg\vec{a}$, $d'=\deg\vec{b}$, $\Delta = d-(e_1+\dots+e_{n'}+e_{n'+m-m'+1}+\dots+e_{m+n-m'})$. The following are equivalent:
\begin{enumerate}
    \item $\QuotLF$ is irreducible;
    \item $(\vec{b},\vec{a})$ is realizable, where $\vec{a}$ and $\vec{b}$ are as given in Lemma \ref{lem:most_balanced};
    \item we have 
    \begin{align*}
        \alpha((n'+m-m'+1,\dots,m+n-m'),\Delta)&=\alpha((n'+2,\dots,n+1), f(n'))<e_{m+n-m'+1},\\
        \beta((n'+1,\dots,n'+m-m'),\Delta)&= \beta((n, \dots,m+n-m'-1), g(m'))> e_{n'},
    \end{align*}
where the inequalities are checked on entries. 
\end{enumerate}
\end{theorem}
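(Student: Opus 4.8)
The plan is to translate irreducibility into the language of strongly stable pairs via Proposition~\ref{prop:components_are_strongly_stable}: since distinct strongly stable pairs give distinct components, $\QuotLF$ is irreducible precisely when it is nonempty and admits exactly one strongly stable pair with respect to $\vec{e}$. Write $\vec{a}_{\bal}$ and $\vec{b}_{\bal}$ for the most balanced quotient and kernel splitting types produced by Lemma~\ref{lem:most_balanced}; the proof of that lemma in fact establishes $\vec{a}_{\bal}\geq\pvec{a}'$ for \emph{every} quotient splitting type $\pvec{a}'$ occurring on $\QuotLF$, and dually $\vec{b}_{\bal}\geq\pvec{b}'$ for every kernel type. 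With this domination property in hand I would prove $(1)\Leftrightarrow(2)$ and then $(2)\Leftrightarrow(3)$.

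For $(1)\Rightarrow(2)$: if $\QuotLF$ is irreducible, its unique component $Z$ equals $\overline{\Sigma_{\vec{b}_*}(\cS)\cap\Sigma_{\vec{a}_*}(\cQ)}$ for the unique strongly stable pair $(\vec{b}_*,\vec{a}_*)$, and this intersection is the dense open locus of $Z$ on which the kernel and quotient take their generic splitting types. Since $\vec{a}_{\bal}$ is realized at some point of $Z$ (any general surjection $\cO(\vec{e})\twoheadrightarrow\cO(\vec{a}_{\bal})$ has locally free kernel on the curve) and the splitting type of the universal quotient is upper semicontinuous on $Z$, the generic value satisfies $\vec{a}_*\geq\vec{a}_{\bal}$; with $\vec{a}_{\bal}\geq\vec{a}_*$ this forces $\vec{a}_*=\vec{a}_{\bal}$, and dually $\vec{b}_*=\vec{b}_{\bal}$. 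A strongly stable pair is realizable, so $(\vec{b}_{\bal},\vec{a}_{\bal})$ is realizable. For $(2)\Rightarrow(1)$: given a strongly stable pair $(\vec{b}',\vec{a}')$, the domination property gives $\vec{a}'\leq\vec{a}_{\bal}$ and $\vec{b}'\leq\vec{b}_{\bal}$, and $(\vec{b}_{\bal},\vec{a}_{\bal})$ is realizable by hypothesis; if either inequality were strict, Lemma~\ref{lem:greater_pair} would say $(\vec{b}',\vec{a}')$ is not strongly stable, a contradiction, so $(\vec{b}',\vec{a}')=(\vec{b}_{\bal},\vec{a}_{\bal})$. Thus there is at most one strongly stable pair, and since realizability of $(\vec{b}_{\bal},\vec{a}_{\bal})$ exhibits a point of $\QuotLF$, there is exactly one, so $\QuotLF$ is irreducible.

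The equivalence $(2)\Leftrightarrow(3)$ is the computational core. If $(\vec{b}_{\bal},\vec{a}_{\bal})$ is realizable then by the above it is the unique strongly stable pair; being stable it is combinatorially stable (Theorem~\ref{thm:stable_pairs}), and since $\vec{a}_{\bal}$ consists of $(e_1,\dots,e_{n'})$ followed by a single $\alpha$-block and $\vec{b}_{\bal}$ of a single $\beta$-block followed by $(e_{m+n-m'+1},\dots,e_{m+n})$, its combinatorial data must have $r=1$, the same $n',m'$ as in Lemma~\ref{lem:most_balanced}, blocks $P_1=(n'+1,\dots,n'+m-m')$, $Q_1=(n'+m-m'+1,\dots,m+n-m')$, and $\delta_1=\Delta$; comparing the two resulting descriptions of $\vec{a}_{\bal},\vec{b}_{\bal}$ gives the two equalities in $(3)$, and Theorem~\ref{thm:strongly_stable} with $r=1$ gives exactly the two inequalities. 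Conversely, assuming $(3)$, take $m',n'$ as in Lemma~\ref{lem:most_balanced}, set $r=1$, $\Lambda=(P_1,Q_1)$ as above, and $\delta_1=\Delta$; the equalities in $(3)$ identify $\Pair(m',n',\Lambda,(\delta_1))$ with $(\vec{b}_{\bal},\vec{a}_{\bal})$, the bound $\delta_1\leq\min(\Delta P_1,\Delta Q_1)$ needed for legitimacy of this combinatorially stable pair follows from the inequalities together with the definitions of $\Delta P_1,\Delta Q_1$, and those same inequalities are the hypotheses of Theorem~\ref{thm:strongly_stable} for $r=1$, so $(\vec{b}_{\bal},\vec{a}_{\bal})$ is strongly stable, hence realizable. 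I expect the main obstacle to be exactly this matching: reconciling the two parametrizations of the most balanced tuples (the $f(n')$/$g(m')$ description versus the block description with $\delta_1=\Delta$) and checking the $\delta_1\leq\min(\Delta P_1,\Delta Q_1)$ constraint, which is a careful index computation around the single block; it may be cleanest to instead run the realizability criterion of Theorem~\ref{thm:main_realizability} directly on the explicit $\vec{a}_{\bal},\vec{b}_{\bal}$, compute the relevant $h_\mu$, and verify that the conditions $A(\mu,\nu),B(\mu,\nu)\geq 0$ for $\nu\geq h_\mu$ collapse to precisely the displayed (in)equalities.
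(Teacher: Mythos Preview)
Your argument for $(1)\Leftrightarrow(2)$ via Lemma~\ref{lem:greater_pair} and the domination property of $\vec{a}_{\bal},\vec{b}_{\bal}$ is essentially what the paper does, though the paper phrases the failure of $(2)$ more succinctly: if $(\vec{b}_{\bal},\vec{a}_{\bal})$ is not realizable, then $\vec{a}_{\bal}$ and $\vec{b}_{\bal}$ occur in \emph{different} strongly stable pairs, giving at least two components.

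The gap is in your $(2)\Rightarrow(3)$. You write that ``since $\vec{a}_{\bal}$ consists of $(e_1,\dots,e_{n'})$ followed by a single $\alpha$-block \dots\ its combinatorial data must have $r=1$.'' This inference is not justified: the combinatorially stable data attached to a stable pair is not read off from any expression of $\vec{a}$ as a concatenation of $\alpha$-blocks, but is produced (via Lemma~\ref{lem:first_block_stable} and the proof of Theorem~\ref{thm:stable_pairs}) from a minimal balancing datum. A priori the resulting partition could have $r>1$ with the several $\alpha(Q_i,\delta_i)$ happening to assemble into something that \emph{also} equals $\alpha((n'+2,\dots,n+1),f(n'))$; likewise the $n',m'$ from the combinatorial data need not obviously match the $n',m'$ of Lemma~\ref{lem:most_balanced}. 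Ruling out $r>1$ is precisely the content of the paper's proof of $(2)\Rightarrow(3)$: it argues by cases on whether $(e_{n'+1},\dots,e_{m+n-m'})$ is balanced, and in the unbalanced case uses the explicit shapes of $\vec{a}_{\bal},\vec{b}_{\bal}$ to force the balancing datum to have $\tau(1)$ small and $\sigma(n)=m+n-m'$, which is incompatible with $r\geq 2$.

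You correctly flag the $(2)\Leftrightarrow(3)$ step as the main obstacle, but you misidentify where the difficulty lies. Once $r=1$ (with matching $n',m'$) is established, the equalities in $(3)$ are automatic---both sides are the last $n-n'$ entries of $\vec{a}_{\bal}$ (resp.\ first $m-m'$ of $\vec{b}_{\bal}$)---and the inequalities are exactly Theorem~\ref{thm:strongly_stable} for $r=1$. The index-matching you worry about is not the issue; showing $r=1$ is. Your fallback suggestion of running Theorem~\ref{thm:main_realizability} directly on the explicit $\vec{a}_{\bal},\vec{b}_{\bal}$ would be a viable alternative route, but would require its own casework and is not what the paper does.
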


\begin{proof} 
By Lemma \ref{lem:greater_pair}, when $(\vec{b},\vec{a})$ is realizable, it is the unique strongly stable pair. Otherwise, $\vec{b}$ and $\vec{a}$ are realized in distinct strongly stable pairs, in which case $\QuotLF$ has at least two irreducible components. Hence $(1)\Llr (2)$. 

To see that $(3) \Rightarrow (2)$, note that the condition for (3) is exactly saying that the unique partition with $r = 1$ and $\vec{\delta} = (\Delta)$ exhibits $(\vec{b}, \vec{a})$ as stable, and in particular realizable. We now show $(2) \Rightarrow (3)$. As above, if $(\vec{b},\vec{a})$ is realizable, it must be strongly stable, and therefore satisfies the conditions of Theorem \ref{thm:strongly_stable}. It suffices to show that in a minimal balancing datum exhibiting $(\vec{b},\vec{a})$ as strongly stable, the partition has $r = 1$. Note that by construction, $a_i = e_i$ for $i \leq n'$, and $a_{n'+1} > e_{n'+1}$ (and similarly for $m'$ and $\vec{b}$).

First suppose that the tuple $(e_{n'+1}, \dots, e_{m+n-m'})$ is balanced, i.e. $e_{n'+1}\geq e_{m+n-m'}-1$. If $e_{n'+1}= e_{m+n-m'}$, then a minimal balancing datum for $(\vec{b},\vec{a})$ would be associated to a partition with $r=1$. If $e_{n'+1}=e_{m+n-m'}-1$ then let $n'+1\leq k\leq {m+n-m'-1}$ such that $e_k+1= e_{k+1}$. In this case, if a minimal balancing datum for $(\vec{b},\vec{a})$ is associated to a partition with $r>1$, then we must have $r = 2$ and the break between $Q_1$ and $P_2$ must occur between $e_k$ and $e_{k+1}$ by the combinatorial conditions for stability. In fact the bound on $\delta_i$ forces $\delta_1 = \delta_2 = 0$, 
but this contradicts the definition of $n'$.

Now suppose $e_{n'+1}< e_{m+n-m'}-1$ and that the entries of $\vec{e}$ are increasing at at least two points in this range or that $\delta_1>0$; otherwise, if $r=2$, we can argue as in the preceding paragraph. Then $e_k<e_{k+1}$ for some smallest $n'+1\leq k \leq {m+n-m'-1}$. Without loss of generality, take $k< n$. Then a balancing datum will have $\tau(1) = n'+1 \leq k$. Since $e_k < e_{n}$ or $\delta_1>0$, $b_1<e_n$, which will mean $\beta((n-1, \dots,m+n-m'-1), g(m'))$ is balanced with first entry less than or equal to $e_k$. Similarly, $\gs(n)=m+n-m'$, which will require $\alpha((n'+2,\dots,n+1), f(n'))$ to be balanced. Since $e_{m+n-m'}-e_k>1$, the only way this is possible is by having $r=1$. 
\end{proof}

Notably, this implies $\QuotLF$ is irreducible in the following cases:
\begin{enumerate}
    \item $d=e_1+\dots+e_n$;
    \item $d\geq n(e_{m+n}-1)+1$ and $d'\leq m(e_1+1)-1$.   
\end{enumerate}
The latter is Corollary \ref{cor:intro_irreducibility}.

However, sometimes we can find degrees in between these bounds where irreducibility still occurs.

\begin{example}
    If $\vec{e}=(1,7,8,9,20)$, then $\text{Quot}^{3,20}_{\pp^1}(\oo(\vec{e}))^\circ$ is irreducible, and the unique component corresponds to the strongly stable pair $\vec{b}=(5,20),\vec{a}=(1,9,10)$.
\end{example}

\section{Connectedness} 
In this section, we prove Theorem \ref{thm:intro_connected}.

We say two realizable pairs $(\vec{b}, \vec{a})$ and $(\pvec{b}', \pvec{a}')$ are \textit{connected} if the strata $\Sigma_{\pvec{b}'}(\cS) \cap \Sigma_{\pvec{a}'}(\cQ)$ and $\Sigma_{\pvec{b}}(\cS) \cap \Sigma_{\pvec{a}}(\cQ)$ are in the same connected component of $\QuotLF$. 

Our general strategy for the proof will be to take a given stable pair and find another stable pair with a more balanced quotient splitting type such that the two pairs are connected. To this end, we will make repeated use of the following:

\begin{lemma} \label{lem:connecting_pairs}
    Suppose $(\vec{b},\vec{a})$ and $(\pvec{b}',\pvec{a}')$ are stable pairs with $\pvec{a}'\leq \vec{a}$ and $\vec{b} \leq \pvec{b}'$. Then $(\vec{b},\vec{a})$ and $(\pvec{b}',\pvec{a}')$ are connected.
\end{lemma}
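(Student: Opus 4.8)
The plan is to interpolate between the two pairs by a sequence of elementary moves, each of which replaces the current pair by a stable pair that is "one step more balanced" on the kernel side or "one step less balanced" on the quotient side, while remaining connected to the previous one. Concretely, since $\pvec{a}' \leq \vec{a}$ and $\vec{b} \leq \pvec{b}'$, I would pick a chain of splitting types $\vec{b} = \vec{b}^{(0)} \leq \vec{b}^{(1)} \leq \cdots \leq \vec{b}^{(N)} = \pvec{b}'$ differing by a single elementary transposition (raise one entry by $1$, lower a larger one by $1$), and similarly a chain $\vec{a} = \vec{a}^{(0)} \geq \cdots \geq \vec{a}^{(N')} = \pvec{a}'$. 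The real content is the one-step lemma: given a stable pair, moving one step in either of these directions lands in a pair that is still realizable (after possibly passing to its stabilization) and is connected to the original. So the crux is to handle a single elementary step.

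For a single step, say on the quotient side, I would argue geometrically inside $\QuotLF$ rather than purely combinatorially. The stratum $\Sigma_{\vec{a}}(\cQ) = [U_{\vec{a}}/\Aut(\cO(\vec{a}))]$ (Proposition~\ref{prop:strataQuotLF}) is irreducible, and its closure contains the strata $\Sigma_{\pvec{a}''}(\cQ)$ for every $\pvec{a}'' \leq \vec{a}$ that is realizable against the appropriate kernel, because $\cO(\vec{a})$ degenerates to $\cO(\pvec{a}'')$ (cf.\ \cite[Proposition 1]{Lin2025}) and one can spread this degeneration out over the universal family of surjections. Hence any two strata $\Sigma_{\vec{a}}(\cQ) \cap \Sigma_{\vec{b}}(\cS)$ and $\Sigma_{\pvec{a}''}(\cQ) \cap \Sigma_{\pvec{b}''}(\cS)$ with $\pvec{a}'' \leq \vec{a}$, $\vec{b} \leq \pvec{b}''$ both meet $\overline{\Sigma_{\vec{a}}(\cQ)}$, which is irreducible and therefore connected; so they lie in the same connected component of $\QuotLF$. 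The symmetric statement holds on the kernel side using $\overline{\Sigma_{\vec{b}}(\cS)}$. The hypotheses $\pvec{a}' \leq \vec{a}$ and $\vec{b} \leq \pvec{b}'$ are exactly what is needed to run this comparison directly in one shot: both pairs' strata sit in the closure of a common irreducible stratum.

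More precisely, I would spell it out as follows. Since $(\vec{b},\vec{a})$ is realizable and $(\pvec{b}',\pvec{a}')$ is realizable, both strata $\Sigma_{\vec{b}}(\cS)\cap\Sigma_{\vec{a}}(\cQ)$ and $\Sigma_{\pvec{b}'}(\cS)\cap\Sigma_{\pvec{a}'}(\cQ)$ are nonempty. I claim $\Sigma_{\pvec{a}'}(\cQ)\subseteq\overline{\Sigma_{\vec{a}}(\cQ)}$: over the universal surjection $\cO(\vec{e})\to\cQ_{\vec{a}}$ on $U_{\vec{a}}$, degenerating the target via $\cO(\vec{a})\rightsquigarrow\cO(\pvec{a}')$ produces a family whose general member is a surjection with cokernel $\cO(\vec{a})$ and whose special member has cokernel $\cO(\pvec{a}')$; the corresponding map to $\QuotLF$ realizes a point of $\Sigma_{\pvec{a}'}(\cQ)$ as a limit of points of $\Sigma_{\vec{a}}(\cQ)$. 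Dually $\Sigma_{\pvec{b}'}(\cS)\subseteq\overline{\Sigma_{\vec{b}}(\cS)}$ fails in general (closure of kernel strata goes the other way), so instead I would use $\Sigma_{\vec{b}}(\cS)\subseteq\overline{\Sigma_{\pvec{b}'}(\cS)}$. Now $\overline{\Sigma_{\vec{a}}(\cQ)}$ is irreducible, hence connected, and contains both $\Sigma_{\vec{a}}(\cQ)\cap\Sigma_{\vec{b}}(\cS)$ and $\Sigma_{\pvec{a}'}(\cQ)$; meanwhile $\overline{\Sigma_{\pvec{b}'}(\cS)}$ is irreducible and contains both $\Sigma_{\pvec{a}'}(\cQ)\cap\Sigma_{\pvec{b}'}(\cS)$ and $\Sigma_{\vec{b}}(\cS)$. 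The intersection $\overline{\Sigma_{\vec{a}}(\cQ)}\cap\overline{\Sigma_{\pvec{b}'}(\cS)}$ contains $\Sigma_{\pvec{a}'}(\cQ)\cap\Sigma_{\vec{b}}(\cS)$ — which is nonempty precisely when $(\vec{b},\pvec{a}')$ is realizable, and this follows from Theorem~\ref{thm:main_realizability} since making $\vec{a}$ less balanced only increases the relevant $A(\mu,\nu)$ and leaves the $B$-side at least as good — so chaining the two connected closed sets through this common point shows all four strata lie in one connected component. The main obstacle is verifying that the intermediate pair $(\vec{b},\pvec{a}')$ (or its stabilization) is realizable; this is where I expect to lean on the combinatorial realizability criterion, checking that the monotonicity hypotheses $\pvec{a}'\leq\vec{a}$, $\vec{b}\leq\pvec{b}'$ force the conditions of Lemma~\ref{lem:necc} to be preserved.
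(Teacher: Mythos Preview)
Your outline matches the paper's exactly: show the intermediate triple $(\vec{b},\pvec{a}')$ is realizable via Theorem~\ref{thm:main_realizability}, then connect all three strata through closures. However, both steps in your execution have genuine gaps.

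For realizability, your claim that ``making $\vec{a}$ less balanced only increases the relevant $A(\mu,\nu)$'' is false: $A(\mu,\nu)=a_\nu-e_{\mu+\nu}$ depends on a single entry, and $\pvec{a}'\leq\vec{a}$ allows $a'_\nu<a_\nu$ (e.g.\ $(1,1)\rightsquigarrow(0,2)$). The paper's mechanism is through $S$, not $A$: from $\pvec{a}'\leq\vec{a}$ and $\vec{b}\leq\pvec{b}'$ one gets $S_{\vec{b},\pvec{a}'}(\mu,\nu)\geq\max\bigl(S_{\vec{b},\vec{a}}(\mu,\nu),\,S_{\pvec{b}',\pvec{a}'}(\mu,\nu)\bigr)$, which pushes the threshold $h_\mu$ for $(\vec{b},\pvec{a}')$ above those of the two given pairs. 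The needed inequalities $A_{\pvec{a}'}(\mu,\nu)\geq 0$ for $\nu\geq h_\mu$ are then inherited from $(\pvec{b}',\pvec{a}')$, and $B_{\vec{b}}(\mu,\nu)\geq 0$ from $(\vec{b},\vec{a})$.

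For the closure step, your degeneration $\cO(\vec{a})\rightsquigarrow\cO(\pvec{a}')$ only shows \emph{some} point of $\Sigma_{\pvec{a}'}(\cQ)$ lies in $\overline{\Sigma_{\vec{a}}(\cQ)}$, not all of them; and in fact the containment $\Sigma_{\pvec{a}'}(\cQ)\subseteq\overline{\Sigma_{\vec{a}}(\cQ)}$ can fail. In Example~\ref{ex:worked_out_stable_pairs}, take $\vec{a}=(6,6,8)$, $\pvec{a}'=(0,8,12)$, $\vec{b}=(0,3,12)$, $\pvec{b}'=(4,5,6)$: both are strongly stable, the hypotheses $\pvec{a}'\leq\vec{a}$ and $\vec{b}\leq\pvec{b}'$ hold, yet $\overline{\Sigma_{(6,6,8)}(\cQ)}$ is an irreducible $37$-dimensional component while $\overline{\Sigma_{(0,8,12)}(\cQ)}$ is a distinct $36$-dimensional component, so neither contains the other. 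The paper avoids this by exploiting the \emph{stability} hypothesis, which you never use: since $(\vec{b},\vec{a})$ is stable, $\Sigma_{\vec{b}}(\cS)\cap\Sigma_{\vec{a}}(\cQ)$ is dense in the irreducible locus $\Sigma_{\vec{b}}(\cS)$, so the intermediate stratum $\Sigma_{\vec{b}}(\cS)\cap\Sigma_{\pvec{a}'}(\cQ)\subseteq\Sigma_{\vec{b}}(\cS)$ automatically lies in its closure; symmetrically on the $\Sigma_{\pvec{a}'}(\cQ)$ side.
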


\begin{proof}
We check that $(\vec{b},\pvec{a}')$ is a realizable pair using Theorem \ref{thm:main_realizability}. Let $A_{\vec{u}}$, $B_{\vec{v}}$, $S_{\vec{v},\vec{u}}$, denote $A,B,S$ as usual evaluated on the kernel-quotient pair $\vec{v},\vec{u}$. Then $S_{\vec{b},\pvec{a}'}(\mu, \nu)\geq S_{\vec{b},\vec{a}}(\mu, \nu), S_{\pvec{b}',\pvec{a}'}(\mu, \nu)$, so realizability of $(\pvec{b}',\pvec{a}')$ immediately implies $A_{\pvec{a}'}(\mu, \nu)\geq 0$ where necessary, and similarly, realizability of $(\vec{b},\vec{a})$ immediately implies $B_{\vec{b}}(\mu, \nu)\geq 0$ where necessary. The conclusion now follows, since $\Sigma_{\vec{b}}(\cS) \cap \Sigma_{\pvec{a}'}(\cQ)$ is in the closures of $\Sigma_{\vec{b}}(\cS) \cap \Sigma_{\vec{a}}(\cQ)$ and $\Sigma_{\pvec{b}'}(\cS) \cap \Sigma_{\pvec{a}'}(\cQ)$. 
\end{proof}

The other main tool will be \textit{iterative balancing}.

\begin{definition}
Suppose $(\vec{b},\vec{a})$ is a realizable pair. We define the \textit{iterative balancing of $(\vec{b},\vec{a})$} to be the pair $\IB(\vec{b},\vec{a})$ obtained by the following process:
\begin{itemize}
    \item define $\pvec{b}^{(0)}:=\vec{b}$ and $\pvec{a}^{(0)}:=\vec{a}$;
    \item for $i\geq1$, define $\vec{a}^{(i)}$ to be the most balanced pair $\pvec{a}'$ such that $(\vec{b}^{(i-1)},\pvec{a}')$ is realizable;
    \item for $i\geq1$, define $\vec{b}^{(i)}$ to be the most balanced pair $\pvec{b}'$ such that $(\pvec{b}',\vec{a}^{(i)})$ is realizable;
    \item define \[\IB(\vec{b},\vec{a}):=(\pvec{b}^{(k)},\pvec{a}^{(k)}),\] where $k$ is chosen sufficiently large so that $(\pvec{b}^{(k)},\pvec{a}^{(k)})=(\pvec{b}^{(k-1)},\pvec{a}^{(k-1)})$.
\end{itemize}
Such a $k$ will exist since otherwise there would be no most balanced tuple of a given degree.
\end{definition}

\begin{lemma}\label{lem:connecting_IB}
Let $(\pvec{b}',\pvec{a}')$ is a stable pair, and suppose $(\vec{b},\vec{a})$ is a realizable pair connected to $(\pvec{b}',\pvec{a}')$ with $\vec{a}>\pvec{a}'$. Then $(\pvec{b}'',\pvec{a}''):=\IB(\vec{b},\vec{a})$ is a stable pair connected to $(\pvec{b}',\pvec{a}')$ with $\pvec{a}''>\pvec{a}'$.
\end{lemma}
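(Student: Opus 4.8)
The plan is to run the iterative balancing process while tracking three things: that the stabilized pair is stable, that every intermediate pair lies in the same connected component of $\QuotLF$ as $(\vec{b},\vec{a})$, and that the quotient splitting type only becomes more balanced. The key preliminary observation is a dictionary: for a \emph{fixed} $\vec{b}$, the set of splitting types $\vec{a}$ with $(\vec{b},\vec{a})$ realizable has a unique maximum in the balancing order, namely the splitting type of the cokernel of a general map in $V_{\vec{b}}\subseteq\Hom(\cO(\vec{b}),\cO(\vec{e}))$. Indeed, every realizable $\vec{a}$ occurs as such a cokernel (the defining short exact sequence is a point of $V_{\vec{b}}$), and by semicontinuity every such cokernel specializes to the generic one; since $V_{\vec{b}}$ is irreducible, that generic cokernel is the desired maximum. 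Dually, for fixed $\vec{a}$ the most balanced $\vec{b}$ with $(\vec{b},\vec{a})$ realizable is the kernel of a general surjection in $U_{\vec{a}}$. So every step of $\IB$ is well defined, $\vec{a}^{(i)}$ is the generic cokernel among maps in $\Sigma_{\vec{b}^{(i-1)}}(\cS)$ and $\vec{b}^{(i)}$ is the generic kernel among maps in $\Sigma_{\vec{a}^{(i)}}(\cQ)$. Moreover $(\vec{b}^{(i-1)},\vec{a}^{(i-1)})$ and $(\vec{b}^{(i-1)},\vec{a}^{(i)})$ are both realizable, so $\vec{a}^{(1)}\leq\vec{a}^{(2)}\leq\cdots$ and likewise $\vec{b}^{(1)}\leq\vec{b}^{(2)}\leq\cdots$ in the balancing order.

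For stability of $\IB(\vec{b},\vec{a})$: at the stabilized value $(\pvec{b}'',\pvec{a}'')=(\pvec{b}^{(k)},\pvec{a}^{(k)})=(\pvec{b}^{(k-1)},\pvec{a}^{(k-1)})$, the defining property of $\pvec{a}^{(k)}$ (applied with $\pvec{b}^{(k-1)}=\pvec{b}''$) says exactly that $\cO(\pvec{a}'')$ is the generic cokernel among maps in $\Sigma_{\pvec{b}''}(\cS)$, and the defining property of $\pvec{b}^{(k)}$ (applied with $\pvec{a}^{(k)}=\pvec{a}''$) says that $\cO(\pvec{b}'')$ is the generic kernel among maps in $\Sigma_{\pvec{a}''}(\cQ)$. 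This is precisely Definition \ref{def:stable_pairs}, so $(\pvec{b}'',\pvec{a}'')$ is a stable pair.

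For connectedness it suffices to show $(\pvec{b}'',\pvec{a}'')$ is connected to $(\vec{b},\vec{a})$ and then combine with the hypothesis and transitivity of "lying in the same connected component". Consider one elementary move $(\vec{b}^{(i-1)},\vec{a}^{(i-1)})\rightsquigarrow(\vec{b}^{(i-1)},\vec{a}^{(i)})$; the move $(\vec{b}^{(i-1)},\vec{a}^{(i)})\rightsquigarrow(\vec{b}^{(i)},\vec{a}^{(i)})$ is dual. By Proposition \ref{prop:strataQuotLF} the stratum $\Sigma_{\vec{b}^{(i-1)}}(\cS)$ is irreducible, and since $\vec{a}^{(i)}$ is the generic cokernel occurring over it, $\Sigma_{\vec{b}^{(i-1)}}(\cS)\cap\Sigma_{\vec{a}^{(i)}}(\cQ)$ is dense (indeed open) in $\Sigma_{\vec{b}^{(i-1)}}(\cS)$. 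Hence its closure in $\QuotLF$ is connected and contains all of $\Sigma_{\vec{b}^{(i-1)}}(\cS)$, in particular the nonempty locus $\Sigma_{\vec{b}^{(i-1)}}(\cS)\cap\Sigma_{\vec{a}^{(i-1)}}(\cQ)$ (nonempty because $(\vec{b}^{(i-1)},\vec{a}^{(i-1)})$ is realizable). Thus these two strata lie in the same connected component of $\QuotLF$. Chaining the moves for $i=1,\dots,k$ connects $(\vec{b},\vec{a})=(\pvec{b}^{(0)},\pvec{a}^{(0)})$ to $(\pvec{b}'',\pvec{a}'')$.

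Finally, because the sequence $\vec{a}^{(i)}$ is weakly increasing in the balancing order, $\pvec{a}''=\pvec{a}^{(k)}\geq\pvec{a}^{(0)}=\vec{a}>\pvec{a}'$, and therefore $\pvec{a}''>\pvec{a}'$. The main obstacle is really just the first step — setting up the dictionary between "most balanced realizable quotient/kernel" and "generic cokernel/kernel of the space of maps" — since it is this that lets the combinatorial fixed-point condition of $\IB$ be read off as the geometric stability condition of Definition \ref{def:stable_pairs}, and lets the elementary moves be controlled by the irreducible strata of Proposition \ref{prop:strataQuotLF}; once that is in place the rest of the argument is formal.
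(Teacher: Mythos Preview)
Your argument is correct and follows essentially the same route as the paper: both chain the elementary moves $(\vec{b}^{(i-1)},\vec{a}^{(i-1)})\rightsquigarrow(\vec{b}^{(i-1)},\vec{a}^{(i)})\rightsquigarrow(\vec{b}^{(i)},\vec{a}^{(i)})$ and use that each successive stratum contains the previous one in its closure inside the irreducible locus $\Sigma_{\vec{b}^{(i-1)}}(\cS)$ (resp.\ $\Sigma_{\vec{a}^{(i)}}(\cQ)$). You are simply more explicit than the paper about the dictionary identifying ``most balanced realizable quotient/kernel'' with ``generic cokernel/kernel'', and about why stability of the fixed point and the inequality $\pvec{a}''\geq\vec{a}>\pvec{a}'$ follow; the paper treats these as immediate from the definitions.
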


\begin{proof}
It will suffice to show that $\IB(\vec{b},\vec{a})$ and $(\vec{b},\vec{a})$ are connected. Let $k$ be the index such that $\IB(\vec{b},\vec{a})=(\pvec{b}^{(k)},\pvec{a}^{(k)})$ as in the process described in the definition above. Note that for $i\geq 1$, $(\pvec{b}^{(i-1)},\pvec{a}^{(i-1)})$ is in the closure of $(\pvec{b}^{(i-1)},\pvec{a}^{(i)})$ which, in turn, is in closure of $(\pvec{b}^{(i-1)},\pvec{a}^{(i-1)})$. Then by induction, $(\vec{b},\vec{a})=(\pvec{b}^{(0)},\pvec{a}^{(0)})$ is in the closure of $(\pvec{b}^{(k)},\pvec{a}^{(k)}) =\IB(\vec{b},\vec{a})$, and we are done.
\end{proof}

Our strategy for showing connectedness will be to show that any stable pair is connected to the strongly stable pair $(\pvec{b}^0,\pvec{a}^0)$ with the most balanced possible quotient splitting type. We give a means of using Lemmas \ref{lem:connecting_pairs} and \ref{lem:connecting_IB} to produce from any pair not equal to $(\pvec{b}^0,\pvec{a}^0)$ another pair with a more balanced quotient splitting type. Before detailing this method in full, we demonstrate it with an example:

\begin{example}
    Let $\vec{e}=(0,4,10,13,15,20)$. Then the pair $\vec{b}=(-1,9,14)$, $\vec{a}=(5,14,21)$ is stable, coming from the partition $\Lambda= ((1),(2),\ (3),(4),\ (5),(6))$ with $\vec{\delta}=(1,1,1)$. Taking the same partition with $\vec{\delta}=(2,1,0)$ gives the stable pair $\vec{b}=(-2,9,15)$, $\vec{a}=(6,14,20)$ and $\vec{\delta}=(3,0,0)$ gives the stable pair $\vec{b}=(-3,10,15)$, $\vec{a}=(7,13,20)$

    Now we have a stable pair coming from a partition with 3 parts where $\vec{\delta}$ is zero in entries indexed greater than 1. Lemma \ref{lem:to_one_block} shows how to connect this to a realizable pair with more balanced quotient splitting type, which in this case, is $\vec{b}=(-8,10,20)$, $\vec{a}=(8,13,19)$. Iterative balancing connects this to the pairs:
    \begin{center}
    \begin{tabular}{ c|c } 
$\vec{b}$ & $\vec{a}$ \\
\hline
$(-4,6,20)$ & $(8,13,19)$ \\
$(-4,6,20)$ & $(8,16,16)$ \\
\end{tabular}
\end{center}
The latter row is stable, coming from the partition $((1),(2),\ (3),(4,5))$ with $m'=1$ and $\delta=(4,4)$, so we now return to the process given in Lemma \ref{lem:killing_higher_delta} (which involves more iterative balancing) to obtain the sequence of connected pairs:
    \begin{center}
    \begin{tabular}{ c|c } 
$\vec{b}$ & $\vec{a}$ \\
\hline
$(-4,6,20)$ & $(8,16, 16)$ \\
$(-5,7,20)$ & $(9,15,16)$ \\
$(-5,8,20)$ & $(10,15,15)$ \\
$(0,2,20)$ & $(10,15,15)$ \\
$(0,2,20)$ & $(12,13,15)$ \\
\end{tabular}
\end{center}
The last pair obtained is stable, and comes from the partition $((1,2),(3,4,5))$ with $m'=1$. By Lemma \ref{lem:connecting_pairs}, this pair is connected to the stable pair associated to the partition $((1),(2,3,4))$ with $m'=2$, namely $\pvec{b}^0=(-13,15,20)$, $\pvec{a}^0=(13,13,14)$, which has balanced quotient splitting type, as desired.
\end{example}

\begin{lemma} \label{lem:killing_higher_delta}
Suppose $(\vec{b},\vec{a})$ is a stable pair corresponding to $m', n',\Lambda,\vec{\delta}$ with $\delta_i>0$ for some $i>1$. Then there exists a stable pair $(\pvec{b}',\pvec{a}')$ connected to $(\vec{b},\vec{a})$ with $\pvec{a}'>\vec{a}$.
\end{lemma}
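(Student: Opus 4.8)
The plan is to relocate one unit of the weight $\delta_i$ into the first block of the partition and then apply iterative balancing. Fix $i>1$ with $\delta_i>0$ and let $\vec{\delta}''$ be obtained from $\vec{\delta}$ by setting $\delta_1''=\delta_1+1$ and $\delta_i''=\delta_i-1$; put $(\pvec{b}'',\pvec{a}''):=\Pair(m',n',\Lambda,\vec{\delta}'')$, which is defined even if $\vec{\delta}''$ violates the bound $\delta_1''\le\min(\Delta Q_1,\Delta P_1)$. Since $(\vec{b},\vec{a})$ is combinatorially stable we have $\delta_s\le\Delta Q_s$ for all $s$, and combined with the partition conditions this makes the blocks $\alpha(Q_1,\delta_1),\dots,\alpha(Q_r,\delta_r)$ strictly increasing --- the largest entry of each lies strictly below the smallest entry of the next --- and similarly for the blocks $\beta(P_s,\delta_s)$. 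Moving one unit from the (large) $Q_i$-block to the (small) $Q_1$-block therefore makes $\pvec{a}''$ strictly more balanced, $\pvec{a}''>\vec{a}$, while it makes $\pvec{b}''$ strictly less balanced, $\pvec{b}''<\vec{b}$.

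First I would verify that $(\pvec{b}'',\pvec{a}'')$ is realizable via the balancing criterion, Theorem \ref{thm:realzable_iff_balancing}. Start from the balancing datum $(\sigma,\tau,\Gamma)$ attached to the combinatorially stable pair $(\vec{b},\vec{a})$ as recorded after Definition \ref{def:combo_stable}. Because $\delta_i>0$, the function $\Gamma$ is positive on some pair supported in block $i$; decreasing $\Gamma$ there by one accounts for the change in that block, and increasing $\Gamma$ by one on the pair $(i',j')$ with $\tau(i')\in P_1$, $\sigma(j')\in Q_1$ at which $\beta(P_1,\cdot)$ drops and $\alpha(Q_1,\cdot)$ rises accounts for the change in the first block. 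Since $P_1$ precedes $Q_1$ in $\Lambda$ we still have $e_{\tau(i')}\le e_{\sigma(j')}$, and the two slack bounds $e_{\tau(i')}-b''_{i'}$ and $a''_{j'}-e_{\sigma(j')}$ each increase by one, so the modified function satisfies all the constraints of a balancing datum. Hence $(\pvec{b}'',\pvec{a}'')$ is realizable.

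Next I would show $(\pvec{b}'',\pvec{a}'')$ is connected to $(\vec{b},\vec{a})$, following the argument of Lemma \ref{lem:connecting_pairs} with the intermediate pair $(\pvec{b}'',\vec{a})$: since $\pvec{b}''\le\vec{b}$, weak eligibility of $(\pvec{b}'',\vec{a})$ is a short check with the block sizes, and the $S$-values of $(\pvec{b}'',\vec{a})$ dominate those of $(\vec{b},\vec{a})$, so $(\pvec{b}'',\vec{a})$ is realizable by Theorem \ref{thm:main_realizability}; then $\Sigma_{\pvec{b}''}(\cS)\cap\Sigma_{\vec{a}}(\cQ)$ lies in the closures of both $\Sigma_{\pvec{b}''}(\cS)\cap\Sigma_{\pvec{a}''}(\cQ)$ and $\Sigma_{\vec{b}}(\cS)\cap\Sigma_{\vec{a}}(\cQ)$. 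Now $(\pvec{b}'',\pvec{a}'')$ is a realizable pair connected to the stable pair $(\vec{b},\vec{a})$ with $\pvec{a}''>\vec{a}$, so Lemma \ref{lem:connecting_IB} applies: $(\pvec{b}',\pvec{a}'):=\IB(\pvec{b}'',\pvec{a}'')$ is a stable pair connected to $(\vec{b},\vec{a})$ with $\pvec{a}'>\vec{a}$, which is what we want.

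The step I expect to be the main obstacle is the realizability check, together with making sure the perturbation actually yields a strictly more balanced $\pvec{a}''$ rather than a reshuffling of $\vec{a}$. In the borderline case $i=2$ where the $Q_1$- and $Q_2$-blocks abut with no slack, raising an entry of $Q_1$ can merely swap it past the smallest entry of $Q_2$; there one must instead merge those two blocks --- equivalently, pass to combinatorial data with $r$ minimal before perturbing --- and re-examine the balancing datum in that configuration. Carrying out this bookkeeping uniformly across all boundary cases is where the real work of the proof lies.
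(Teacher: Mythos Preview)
Your approach is essentially the same as the paper's: perturb $\vec{\delta}$ by moving one unit from $\delta_i$ to $\delta_1$, verify realizability and connectedness, then apply iterative balancing via Lemma~\ref{lem:connecting_IB}. The paper invokes Lemma~\ref{lem:connecting_pairs} directly (using only its proof, which needs just realizability), whereas you spell out a balancing datum and redo the closure argument with the intermediate $(\pvec{b}'',\vec{a})$; both are fine.

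Your final paragraph, however, manufactures an obstacle that does not exist. The ``borderline case'' you worry about cannot occur, and no block-merging is needed. Because the original pair is combinatorially stable, $\delta_1\le\Delta Q_1$, so every entry of $\alpha(Q_1,\delta_1)$ is at most $e_{Q_{1,\last}+1}-1=e_{P_{2,1}}-1$; after adding one, the changed entry is at most $e_{P_{2,1}}$. On the other side, since $\delta_i>0$ the entry of $\alpha(Q_i,\delta_i)$ that drops satisfies, \emph{after} dropping, the lower bound $e_{Q_{i,k}}\ge e_{Q_{i,1}}$, hence before dropping it is at least $e_{Q_{i,1}}+1$. As $P_2$ precedes $Q_i$ in the partition we have $e_{P_{2,1}}\le e_{Q_{i,1}}$, so the raised entry is at most $e_{P_{2,1}}\le e_{Q_{i,1}}$, which is strictly less than the original value of the dropped entry. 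Thus the entries of $\pvec{a}''$ remain ordered in their original positions and $\pvec{a}''>\vec{a}$ is genuinely strict; the paper records exactly this with the chain $\alpha(Q_1,\delta_1)_{\last}<e_{P_{2,1}}\le e_{Q_{i,1}}<\alpha(Q_i,\delta_i)_1$. The symmetric estimate $\beta(P_1,\delta_1)_{\last}\le e_{P_{1,\last}}\le e_{Q_{i-1,\last}}<\beta(P_i,\delta_i)_1$ gives $\pvec{b}''<\vec{b}$. So the ``real work'' you anticipate is already done by the stability bound $\delta_s\le\min(\Delta Q_s,\Delta P_s)$.
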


\begin{proof}
Choose $i>1$ such that $\delta_i>0$. Then \[(\pvec{b}'',\pvec{a}''):=\text{Pair}(m', n', \Lambda,(\delta_1+1,\dots,\delta_i-1,\dots,\delta_r))\]
is realizable, and connected to $(\vec{b},\vec{a})$ by Lemma \ref{lem:connecting_pairs}, with $\pvec{a}''>\vec{a}$ (note that the entries of $\pvec{a}''$ are ordered, since $\alpha(Q_1, \delta_1)_{\text{last}}<e_{P_{2,1}}<\alpha(Q_i,\delta_i)_1$ and entries of $\pvec{a}''$ will differ from those of $\vec{a}$ will differ by at most 1). Now we can define $(\pvec{b}',\pvec{a}'):=\IB(\pvec{b}'',\pvec{a}'')$, and Lemma \ref{lem:connecting_IB} finishes. 
\end{proof}

Repeated application of Lemma \ref{lem:killing_higher_delta} gives:

\begin{corollary} \label{cor:killing_higher_delta}
Every stable pair $(\vec{b},\vec{a})$ is connected to a stable pair $\pvec{b}',{a'}$ corresponding to $m', n',\Lambda,\vec{\delta}$ with $\delta_i=0$ for all $i>1$ with $\pvec{a}'\geq \vec{a}$.
\end{corollary}

\begin{lemma} \label{lem:to_one_block}
Suppose $(\vec{b},\vec{a})=\text{Pair}(m', n',\Lambda,\vec{\delta})$ with $\Lambda=(P_1,Q_1,\dots,P_r,Q_r)$, $r\geq 2$, and $\delta_i=0$ for all $i>1$. Then there exists a stable pair $(\pvec{b}',\pvec{a}')$ connected to $(\vec{b},\vec{a})$ with $\pvec{a}'>\vec{a}$.
\end{lemma}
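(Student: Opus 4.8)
The plan is to reduce the number of blocks $r$ by one while producing a strictly more balanced quotient splitting type, then invoke the preceding machinery to conclude connectedness. Recall we are given $(\vec{b},\vec{a}) = \Pair(m',n',\Lambda,\vec{\delta})$ with $\Lambda = (P_1,Q_1,\dots,P_r,Q_r)$, $r \geq 2$, and $\delta_i = 0$ for $i > 1$. Since $\delta_i = 0$ for $i > 1$, the portion of $\vec{a}$ coming from $Q_2, \dots, Q_r$ is just $(e_{Q_2}, \dots, e_{Q_r})$ and likewise the portion of $\vec{b}$ coming from $P_2,\dots,P_r$ is just $(e_{P_2},\dots,e_{P_r})$; in other words, everything past the first block is "already $\vec{e}$" and the only nontrivial balancing lives in the $(P_1,Q_1)$ block with amount $\delta_1$. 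The key observation is that, because the blocks past the first are trivial, we may merge $Q_1, P_2, Q_2, \dots, P_r, Q_r$ (and, if we like, absorb the trivial $P_i$'s) into a single block and consider the partition $\Lambda' = (P_1, Q_1')$ where $Q_1' = Q_1 \cup P_2 \cup Q_2 \cup \cdots \cup P_r \cup Q_r = (P_{1,\last}+1, \dots, m+n-m')$, with $\vec{\delta}' = (\delta_1')$ for a suitable $\delta_1' \geq \delta_1$.

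First I would check that $\Pair(m',n',\Lambda',\vec{\delta}')$ with $\delta_1' = \delta_1$ is realizable and its quotient splitting type $\pvec{a}'$ satisfies $\pvec{a}' \geq \vec{a}$: indeed $\alpha(Q_1', \delta_1)$ is the most balanced tuple at least $e_{Q_1'}$ entrywise with the prescribed degree shift, and since the concatenation $\vec{a} = (\dots, \alpha(Q_1,\delta_1), e_{Q_2}, \dots, e_{Q_r})$ is a less balanced tuple with the same degree that is also $\geq e_{Q_1'}$ entrywise (using $e_{P_{i,\last}} < e_{Q_{i+1,1}}$ to see the entries are ordered), we get $\alpha(Q_1',\delta_1) \geq \alpha(Q_1,\delta_1) \sqcup (e_{Q_2},\dots,e_{Q_r})$ by maximality. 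Realizability of $\Pair(m',n',\Lambda',(\delta_1))$ follows directly from Theorem \ref{thm:realzable_iff_balancing} since combinatorially stable pairs come equipped with a balancing datum (as recorded after Definition \ref{def:combo_stable}); note I do not need this pair to be stable, only realizable, so the bound $\delta_1 \le \min(\Delta P_1, \Delta Q_1')$ is not an issue. Then Lemma \ref{lem:connecting_pairs}, applied with $\vec{b}^{\,\prime}$ the kernel splitting type of this merged pair (which satisfies $\vec{b} \leq \vec{b}^{\,\prime}$, dually to the quotient inequality), shows $(\vec{b},\vec{a})$ and $\Pair(m',n',\Lambda',(\delta_1))$ are connected.

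Next I would apply iterative balancing: set $(\pvec{b}',\pvec{a}') := \IB$ of the merged pair. By Lemma \ref{lem:connecting_IB} — whose hypothesis "$\vec{a} > \pvec{a}'$ for a stable pair $(\pvec{b}',\pvec{a}')$" I need to supply, which here just means checking the merged pair is not already iteratively balanced, equivalently that its quotient type is not the most balanced possible — this produces a stable pair connected to $(\vec{b},\vec{a})$ with strictly more balanced quotient type. The one case requiring care is when the merged pair is already iteratively balanced, i.e. $\alpha(Q_1',\delta_1)$ and $\beta(P_1,\delta_1)$ cannot be made more balanced against each other; but then, since $r \geq 2$, there is a "gap" $e_{P_{1,\last}} < e_{Q_{1,1}}$ inside $Q_1'$ (recall $P_1, Q_1$ was a genuine two-part piece of the original partition) across which we can transfer a unit: concretely, increasing $\delta_1$ to $\delta_1' = \delta_1 + 1$ is legal precisely when $\delta_1 < \Delta Q_1'$ and $\delta_1 < \Delta P_1$, and I would argue that the failure of strong stability of the original pair — which must hold since the original pair with $r \geq 2$ is genuinely a "merge candidate" — together with minimality forces room to increase. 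The main obstacle I anticipate is exactly this bookkeeping: verifying that the merged-and-balanced pair is strictly more balanced than $\vec{a}$ and not already maximal, handling the boundary case where $\delta_1$ has already saturated $\min(\Delta P_1, \Delta Q_1')$ (in which case one should instead peel off a unit from a trivial block $P_i$ or $Q_i$ with $i > 1$, using that such blocks have $\Delta P_i, \Delta Q_i > 0$ whenever they are nonempty and $\vec{e}$ is non-constant there), and confirming throughout that all intermediate tuples remain weakly increasing so that the $\Pair$ notation and Theorem \ref{thm:main_realizability} apply as stated.
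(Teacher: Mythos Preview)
Your merged-partition approach has a fatal rank mismatch. When you set $Q_1' = Q_1 \cup P_2 \cup Q_2 \cup \cdots \cup P_r \cup Q_r$ and form $\Lambda' = (P_1, Q_1')$, the resulting quotient tuple $(e_1,\dots,e_{n'}, \alpha(Q_1',\delta_1))$ has $n' + |Q_1'| = n + |P_2| + \cdots + |P_r|$ entries, while the kernel tuple has only $|P_1| + m'$ entries. Since $r \geq 2$ and each $P_i$ is a nonempty interval, these are strictly greater than $n$ and strictly less than $m$ respectively, so the merged pair does not live in $\QuotLF$ at all and cannot be compared to $(\vec{b},\vec{a})$ via Lemma \ref{lem:connecting_pairs}. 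The indices in the blocks $P_2,\dots,P_r$ contribute entries to $\vec{b}$, not to $\vec{a}$; you cannot simply absorb them into the quotient side without changing the Quot scheme you are working in. Relatedly, your claim that $\vec{b} \leq \pvec{b}'$ for the merged kernel is not even meaningful, since the two tuples have different lengths.

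The paper's proof avoids this by making a local, rank-preserving modification at the boundary between $P_r$ and $Q_r$: it decreases the quotient entry $a_{j_0} = e_{Q_{r,1}}$ by one, increases the kernel entry $e_{P_{r,\last}} = e_{Q_{r,1}-1}$ up to $e_{Q_{r,1}}$, and compensates by bumping $\alpha(Q_1,\delta_1)$ to $\alpha(Q_1,\delta_1+1)$ on the quotient side and decreasing $b_1$ by $d = e_{Q_{r,1}} - e_{Q_{r,1}-1}$ on the kernel side. This keeps both ranks fixed, yields $\pvec{a}' > \vec{a}$ and $\pvec{b}' < \vec{b}$ so that Lemma \ref{lem:connecting_pairs} applies directly, and realizability of the new pair is verified by explicitly adjusting the balancing datum. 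Iterative balancing via Lemma \ref{lem:connecting_IB} then finishes, as in the last step of your plan.
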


\begin{proof}
Let $d=e_{Q_{r,1}}-e_{Q_{r,1}-1}$. Let $\pvec{a}'$ be obtained from $\vec{a}$ by replacing the entry $a_{j_0}$ equal to $e_{Q_{r,1}}$ with $e_{Q_{r,1}}-1$ and $\alpha(Q_1, \delta_1)$ with $\alpha(Q_1, \delta_1+1)$ (here, $r>2$ implies $e_{Q_{r,1}}>e_{P_{2,1}}$ implies the entries of $\pvec{a}'$ are ordered as in Lemma \ref{lem:killing_higher_delta}) , and $\pvec{b}'$ obtained from $\vec{b}$ by replacing $e_{Q_{r,1}-1}$ with $e_{Q_{r,1}}$ and $b_1$ with $b_1-d$.

We can obtain a balancing datum for $(\pvec{b}',\pvec{a}')$ by starting with a balancing datum for $(\vec{b},\vec{a})$, swapping $\gs(\gs^{-1}(Q_{r,1}))$ and $\tau(\tau^{-1}(Q_{r,1}-1))$, increasing $\G(1,j)$ by 1 where $a_j$ is the entry of $\alpha(Q_1, \delta_1)$ that was changed going from $\vec{a}$ to $\pvec{a}'$, and increasing $\G(1,j_0)$ by $d-1$. Then $(\pvec{b}',\pvec{a}')$ is realizable and connected to $(\vec{b},\vec{a})$ by Lemma \ref{lem:connecting_pairs}, and $\pvec{a}'>\vec{a}$. Now the conclusion follows from applying Lemma \ref{lem:connecting_IB}.
\end{proof}

\textit{Proof of Theorem \ref{thm:intro_connected}} 
We show that any stable pair $(\vec{b},\vec{a})$ is connected to the strongly stable pair $(\pvec{b}^0,\pvec{a}^0)$ with the most balanced possible quotient splitting type. 

Starting with $(\vec{b},\vec{a})$, applying Corollary \ref{cor:killing_higher_delta} followed by Lemma \ref{lem:to_one_block} and choosing a strongly stable pair containing the result in its closure, we can obtain a strongly stable pair $(\pvec{b}'',\pvec{a}'')$ with $\pvec{a}''>\vec{a}$, unless $(\vec{b},\vec{a})$ is a strongly stable pair of the form $\text{Pair}(m', n',(P_1,Q_1),(\delta_1))$. Henceforth assume without loss of generality that $(\vec{b},\vec{a})$ is strongly stable and of this form. The pair $(\pvec{b}^0,\pvec{a}^0)$ is of the same form, so let it be given by $\text{Pair}({m^0}', {n^0}',(P^0_1,Q^0_1),(\delta^0_1))$. 

Now, by assumption $\pvec{a}^0\geq\vec{a},$ so $n'\geq {n^0}'$. If ${n'}= {n^0}'$, then $\pvec{a}^0=\vec{a},$ so we are done. Now suppose $n'> {n^0}'$. If $m'\geq {m^0}'$, then $\vec{b}\leq \pvec{b}^0$, but this is is a contradiction since it implies $(\vec{b},\vec{a})$ is not strongly stable. On the other hand, if $m'< {m^0}'$, then $\vec{b}> \pvec{b}^0$, in which case Lemma \ref{lem:connecting_pairs} shows that $(\vec{b},\vec{a})$ and $(\pvec{b}^0,\pvec{a}^0)$ are connected. This completes the proof.
\qed

\newpage

\printbibliography

@phdthesis{Erman2010,
  author  = "Daniel Max Erman",
  title   = "Applications and Extensions of Boij-Soederberg Theory",
  school  = "UC Berkeley",
  year    = "2010"
}

@incollection{Stromme,
    AUTHOR = {Str\o mme, Stein Arild},
     TITLE = {On parametrized rational curves in {G}rassmann varieties},
 BOOKTITLE = {Space curves ({R}occa di {P}apa, 1985)},
    SERIES = {Lecture Notes in Math.},
    VOLUME = {1266},
     PAGES = {251--272},
 PUBLISHER = {Springer, Berlin},
      YEAR = {1987},
      ISBN = {3-540-18020-6},
   MRCLASS = {14H10 (14C22 14F05 14F45 14M15)},
  MRNUMBER = {908717},
MRREVIEWER = {Rafael\ Hernandez},
       DOI = {10.1007/BFb0078187},
       URL = {https://doi.org/10.1007/BFb0078187},
}

@article{Popa-Roth,
	author = {Popa, Mihnea and Roth, Mike},
	date-added = {2025-11-27 23:00:13 -0800},
	date-modified = {2025-11-27 23:00:28 -0800},
	doi = {10.1007/s00222-002-0279-y},
	id = {Popa2003},
	journal = {Inventiones mathematicae},
	number = {3},
	pages = {625--663},
	title = {Stable maps and Quot schemes},
	url = {https://doi.org/10.1007/s00222-002-0279-y},
	volume = {152},
	year = {2003},
	bdsk-url-1 = {https://doi.org/10.1007/s00222-002-0279-y}}

@article{Mandal2021,
title = {On the loci of morphisms from P1 to G(r,n) with fixed splitting type of the restricted universal sub-bundle or quotient bundle},
journal = {Journal of Algebra},
volume = {585},
pages = {759-783},
year = {2021},
issn = {0021-8693},
doi = {https://doi.org/10.1016/j.jalgebra.2021.04.034},
url = {https://www.sciencedirect.com/science/article/pii/S0021869321002544},
author = {Sayanta Mandal},
keywords = {Rational curves, Grassmannian, Vector bundles},
abstract = {Let n≥4, 2≤r≤n−2 and e≥1. We show that the intersection of the locus of degree e morphisms from P1 to G(r,n) with the restricted universal sub-bundles having a given splitting type and the locus of degree e morphisms with the restricted universal quotient-bundle having a given splitting type is non-empty and generically transverse along at least one component of the intersection. As a consequence, we get that the locus of degree e morphisms from P1 to G(r,n) with the restricted tangent bundle having a given splitting type need not always be irreducible.}
}

@article{Boij-Soderberg2008,
    author = {Boij, Mats and Söderberg, Jonas},
    title = {Graded Betti numbers of Cohen–Macaulay modules and the multiplicity conjecture},
    journal = {Journal of the London Mathematical Society},
    volume = {78},
    number = {1},
    pages = {85-106},
    year = {2008},
    month = {04},
    abstract = {We give conjectures on the possible graded Betti numbers of Cohen–Macaulay modules up to multiplication by positive rational numbers. The idea is that the Betti diagrams should be non-negative linear combinations of pure diagrams. The conjectures are verified in the cases where the structure of resolutions is known, that is: for modules of codimension two, for Gorenstein algebras of codimension three and for complete intersections. The motivation for proposing the conjectures comes from the Multiplicity conjecture of Herzog, Huneke and Srinivasan.},
    issn = {0024-6107},
    doi = {10.1112/jlms/jdn013},
    url = {https://doi.org/10.1112/jlms/jdn013},
    eprint = {https://academic.oup.com/jlms/article-pdf/78/1/85/2445384/jdn013.pdf},
}

@article{eisenbud2009betti,
  title={Betti numbers of graded modules and cohomology of vector bundles},
  author={Eisenbud, David and Schreyer, Frank-Olaf},
  journal={Journal of the American Mathematical Society},
  volume={22},
  number={3},
  pages={859--888},
  year={2009}
}

@misc{Lin2025,
	archiveprefix = {arXiv},
	author = {Feiyang Lin},
	eprint = {2507.01233},
	primaryclass = {math.AG},
	title = {Resolving the Singularities of Splitting Loci},
	url = {https://arxiv.org/abs/2507.01233},
	year = {2025},
	bdsk-url-1 = {https://arxiv.org/abs/2507.01233}}

@article{larson_degeneracy,
	author = {Hannah K. Larson},
	date-added = {2025-04-03 14:39:32 -0700},
	date-modified = {2025-04-03 16:25:50 -0700},
	journal = {Advances in Mathematics},
	number = {107563},
	title = {Universal degeneracy classes for vector bundles on $\mathbb{P}^1$ bundles},
	volume = {380},
	year = {2021}}

\end{document}